\theoremstyle{definition}
\newtheorem{thm}{Theorem}[section]
\newtheorem{lem}[thm]{Lemma}
\newtheorem{cor}[thm]{Corollary}
\newtheorem{prop}[thm]{Proposition}
\newtheorem{rem}[thm]{Remark}
\newtheorem{exa}[thm]{Example}
\numberwithin{equation}{section}
\def\htt{\mathrm{ht}}\def\Ht{\mathrm{Ht}}
\def\Ind{\mathrm{Ind}}
\def\S{\mathcal{S}}
\def\T{\mathcal{T}}
\newcommand{\C}{\mathbb C}
\newcommand{\N}{\mathbb{N}}
\newcommand{\Z}{\mathbb{Z}}
\renewcommand{\l}{\lambda}
\newcommand{\si}{\sigma}
\newcommand{\p}{\partial}
\newcommand{\ga}{\gamma}
\newcommand{\vphi}{\varphi}
\def\ot{\otimes}
\def\O{\Omega}
\def\LL{\mathcal{L}}
\def\GG{\mathcal{G}}
\def\g{\mathfrak{g}}\def\ga{\gamma}\def\Ga{\Gamma}
\def\tg{\widetilde{\g}}
\def\hg{\widehat{\g}}
\def\tM{\widetilde{M}}
\def\hM{\widehat{M}}\def\bM{\overline{M}}\def\bu{\bar{u}}\def\bw{\bar{w}}
\def\h{\mathfrak{h}}
\def\n{\mathfrak{n}}
\def\sl{\mathfrak{sl}}
\def\hsl{\widehat{\mathfrak{sl}}}
\def\normOrd #1{{\mathop{:}\nolimits\!#1\!\mathop{:}\nolimits}}
\newcommand{\bo}{\pmb{0}}
\title[Smooth modules]{\bf Smooth representations of  affine Kac-Moody algebras}
\author{V. Futorny,  X. Guo, Y. Xue and K. Zhao}
\date{}
\begin{document}
	
	\maketitle
	
	\begin{abstract}
		Smooth modules for affine Kac-Moody algebras have a prime importance for the quantum field theory as they correspond to the representations of the universal affine vertex algebras. But, very little is known about such modules beyond the category of positive energy representations.
		We construct a new class of  smooth modules over   affine Kac-Moody algebras. In a particular case, these modules are isomorphic to those induced from generalized Whittaker modules for  Takiff Lie algebras.
		  We establish the irreducibility criterion for constructed modules in the case of the Lie algebra $A^{(1)}_1$.
		\vskip.5cm
		
		\noindent {\bf Keywords:}  Affine Kac-Moody  algebras, irreducible  modules, \\
        smooth modules, Takiff algebras

		\noindent{\bf Mathematics Subject Classification} 2010: 17B10,
		17B67, 17B69.
	\end{abstract}
	
	%%%%%-------------- main part-------------- main part
	
	\section{Introduction}
	
	Among infinite dimensional Lie algebras the affine Kac-Moody algebras have  a very important placement due to their  numerous
	 applications in different areas of  Mathematics and Physics:
	 string theory and conformal fields theory \cite{FMS}), vertex theory \cite{LL}, quantum groups \cite{CP, Lu}, representations of associative algebras and combinatorics \cite{K}, to name just a few.

	The representation theory of affine Lie algebras is quite different in many aspects to that of finite dimensional
	simple Lie algebras. For example, there are irreducible modules  over  the  affine Kac-Moody algebras, including the ones we construct in this paper,  that do not have  their counterparts in the finite dimensional case.
	
	Smooth representations are of a special significance due to their relevance to vertex algebras and conformal field theory. Any smooth representation of an affine Kac-Moody algebra is a representation of the corresponding universal affine vertex algebra.
	The simplest family of smooth modules consists of integrable highest weight modules with locally nilpotent action of real root elements of the Lie algebra.
	Integrable highest  weight modules was the first class of
	representations over affine Lie algebras extensively studied \cite{K}. All irreducible integrable weight modules
	with finite dimensional weight spaces over  untwisted affine Lie
	algebras were classified by
	Chari \cite{Ch}.  This classification was extended  to the twisted case by  Chari and  Pressley \cite{CP2}. Every irreducible integrable weight module with finite dimensional weight spaces is
	either  highest weight module or a loop module (not smooth), plus a spectral flow.  Loop modules with finite-dimensional weight spaces were studies in \cite{E}.
	%More general results on
	% weight modules over untwisted affine Lie algebras $\hg$ were
	%obtained in \cite{Li}.
	
	  A comprehensive program studying
	weight modules for arbitrary affine Lie algebras was initiated in the 1990's \cite{F3}, and a classification of all irreducible weight
	modules for affine Lie
	algebras with nonzero level and with finite dimensional weight spaces  was obtained in \cite{FT}. Such modules
	are induced from cuspidal modules over Levi subalgebras of type $A$ and $C$  of proper parabolic subalgebras, that is they are positive energy representations, and hence smooth. If the level is zero, then the classification of such irreducible modules is claimed in an unpublished paper \cite{DG}.
	
	 First examples of irreducible weight modules  with infinite dimensional
	weight spaces were constructed by Chari and Pressley in \cite{CP3} by taking a tensor product of an irreducible integrable highest weight module and an integrable loop module.
	 Theory of Verma type modules and generaized Verma type was  developed in \cite{CFu}, \cite{F1}, \cite{F2}, \cite{F3}, \cite{BBFK}, \cite{FK1}, \cite{FK2}. General theory of positive energy representations was developed in \cite{FKr}.
	 A complete classification
	of all irreducible (weight and non-weight) modules over affine Lie
	algebras with locally nilpotent action of the nilpotent radical was
	obtained in \cite{MZ}.
	
	 A famous class of non-weight modules over  affine Kac-Moody
	algebras consists of Whittaker modules. Their
	structure in the case of  $A_1^{(1)}$ was systematically studied in \cite{ALZ}, while this
	study for other affine Lie algebras is still open.
	
	A class of imaginary Whittaker modules was introduced in \cite{Chr}, and corresponding induced modules were studied in \cite{CF}.
	
	Unfortunately, all these results did
	 not provide new examples of smooth modules. In fact, very little is known about  smooth weight modules  with infinite dimensional
	weight spaces or smooth non-weight modules. The purpose of this paper if to construct such modules for an arbitrary untwisted affine Kac-Moody algebra.

	%by considering
	%non-standard partitions of the root system; that is, partitions
	%which are not equivalent under the Weyl group to the standard
	%partition into positive and negative roots (see [DFG]). For affine
	%Lie algebras, there are only finitely many equivalence
	%classes of such non-standard partitions (see [F4]). Corresponding to
	%each partition is a Borel subalgebra, and one can form
	%representations induced from one-dimensional modules for these Borel
	%subalgebras. These modules, often referred to as Verma-type modules,
	%were first studied by Jakobsen and Kac [JK], and then by Futorny
	%[F3, F4]. Results on the structure of Verma-type modules can also be
	%found in [Co, F1, FS]. For a nice exposition of Futorny's program,
	%see [F5].

Let  $\g$ be a simple finite-dimensional Lie algebra with a Cartan decomposition 	$\g=\n_-\oplus \h\oplus \n_+$, $\hg=\LL(\g) \oplus \C K$ and $\tg= \hg \oplus \C d$, where
$\LL(\g)$ is the loop algebra of $\g$, $K$ is the central element and $d$ is the degree derivation.

Fix a set pairs of integers associated with simple roots: 	$$\Sigma=\{(N_{\alpha}, M_{\alpha}), \, N_{\alpha}+M_{\alpha}\geq 0, \,  \alpha\in \Pi\}.$$ Denote by $\S_{\Sigma}$ the Lie subalgebra of $\hg$ generated by $K$ and the elements of the form
	$$h_{\alpha}(n_{\alpha,0}), \, e_{\alpha}(n_{\alpha}), \, f_{\alpha}(m_{\alpha}),$$ where
	$ n_{\alpha,0}, n_{\alpha}, m_{\alpha}\in\Z, n_{\alpha,0}\geq 0, n_{\alpha}>N_{\alpha}, m_{\alpha}>M_{\alpha}$,
	$\alpha\in \Pi$. Consider a Lie algebra homomorphism $\varphi_{\Sigma}: \S_{\Sigma}\to\C$, the induced smooth $\hat\g$-module	
$\hM(\varphi_{\Sigma})=\Ind_{\S_{\Sigma}}^{\hg}\C v$ and the induced smooth $\tg$-module
	$\tM(\varphi_{\Sigma})=\Ind_{\S_{\Sigma}}^{\tg}\C v,$ where $\S_{\Sigma}$ acts  on $v$ with respect to  $\varphi_{\Sigma}$. If $N_{\alpha}=n$ and $M_{\alpha}=m$ for all $\alpha\in \Pi$, we simply write $\hM(\varphi_{n, m}) $ and $\tM(\varphi_{n, m})$ for corresponding induced modules.
	Modules $\hM(\varphi_{n, m}) $ have a combined weight-Whittaker structure and they
	 can be realized as an induced  modules over
 the Takiff algebra.

% When $n=-1$, the module $\hM(\varphi_{-1, m}) $
Let	$\T_m(\g)=\g\ot(\C[t]/t^{m+1}\C[t])$ be the $m$-th generalized Takiff algebra with triangualr decomposition
 $ \T_m(\g)=\T_m(\g)_+\oplus\T_m(\g)_0\oplus\T_m(\g)_-,$
	where $\T_m(\g)_{\pm}=\n_{\pm}\ot(\C[t]/t^{m+1}\C[t])$	and $\T_m(\g)_{0}=\h\ot(\C[t]/t^{m+1}\C[t])$. Taking an
	arbitrary Lie algebra homomorphism $\psi: \T_m(\g)_{0}\to\C$, we define
	a $\T_m(\g)_+\oplus\T_m(\g)_0$-module structure on $\C$ with a trivial action of $\T_m(\g)_{+}$, and construct  the induced $\T_m(\g)$-module
	$V(\psi)$.  Using the canonical surjective homomorphism $\g\ot\C[t]\to\T_m(\g)$,
	  we can view $V(\psi)$ as a module over $\g\ot\C[t]\oplus\C K$, on which $K$ acts as a multiplication by $\theta\in \C$. Finally, we define
	 the induced $\hg$-module
	$M(\psi,\theta)=\Ind_{\g\ot\C[t]\oplus\C K}^{\hg}V(\psi).$
	
It turns out that $M(\psi, \theta)\cong\hM(\vphi),$	
for some homomorphism $\vphi:\S_{-1,m} \to \C$ such that $\vphi(K)=\theta$.

 We address the irreducibility of $\hM(\vphi)$ in the case of the affine Lie algebra $\hsl_2$.
 Our main results are summarized in the following theorems (cf. Theorem \ref{irre general} and Theorem \ref{irre K=-2}).

 \begin{thm} Let $\hg=\hsl_2$.
Suppose $n, m\in\Z$ with $n+m\geq0$ and $\vphi$ is a Lie algebra homomorphism from $\S_{n, m}$ to $\C$.
The $\hg$-module $\hM(\vphi)$ is irreducible if and only if $\vphi(h(n+m+1))\neq0$
and $\vphi(K)\neq-2$.
\end{thm}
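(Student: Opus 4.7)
The plan is to study $\hM(\vphi)$ via the PBW basis indexed by standard monomials in the ordered complement
$\CC=\{h(-k):k\geq 1\}\sqcup\{e(i):i\leq n\}\sqcup\{f(j):j\leq m\}$
of $\S_{n,m}$ in $\hg$. As a preliminary, note that because $\vphi$ factors through a one-dimensional representation it annihilates every bracket in $\S_{n,m}$; combined with $[h(0),e(k)]=2e(k)$, $[h(0),f(k)]=-2f(k)$, and $[e(p),f(q)]=h(p+q)+p\delta_{p+q,0}K$ for $p>n, q>m$, this forces $\vphi(e(k))=0$ for $k>n$, $\vphi(f(k))=0$ for $k>m$, and $\vphi(h(k))=0$ for $k\geq n+m+2$. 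Thus $\vphi$ is encoded by the scalars $\vphi(K),\vphi(h(0)),\ldots,\vphi(h(n+m+1))$, with $\vphi(h(n+m+1))$ the unique "top" free parameter.

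For necessity, suppose first that $\vphi(h(n+m+1))=0$. Then the subalgebra $\S'=\S_{n,m}\oplus\C f(m)$ of $\hg$ carries a one-dimensional representation $\vphi'$ extending $\vphi$ by $\vphi'(f(m))=0$; the consistency check reduces to $\vphi([e(i),f(m)])=\vphi(h(i+m))=0$ for every $i>n$, which uses $\vphi(h(k))=0$ for $k\geq n+m+2$ together with the hypothesis $\vphi(h(n+m+1))=0$ at $i=n+1$. The induced module $\Ind_{\S'}^{\hg}\C_{\vphi'}$ is then a proper quotient of $\hM(\vphi)$, and the kernel of the surjection is a proper non-zero submodule. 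If instead $\vphi(K)=-2$, the critical level of $\hsl_2$, a Segal--Sugawara construction produces a non-scalar central-like operator on $\hM(\vphi)$ whose orbit of $v$ spans a proper submodule; this is the content of the companion Theorem~\ref{irre K=-2}.

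For sufficiency, assume $\vphi(h(n+m+1))\neq 0$ and $\vphi(K)\neq -2$, let $N\subseteq\hM(\vphi)$ be a non-zero submodule, and pick $0\neq w\in N$ whose leading standard monomial $\mu_0$ is minimal under a total order by (total length, multiset of loop indices, lex). If $\mu_0$ is the empty monomial then $w$ is a non-zero scalar multiple of $v$, so $v\in N$. To force this, three reduction tools are needed: (i) if $\mu_0$ contains an $f(j)$-letter, applying $e(n+m+1-j)\in\S_{n,m}$ strips it via the commutator $h(n+m+1)$ acting as the non-zero scalar $\vphi(h(n+m+1))$; (ii) the mirror operator $f(n+m+1-i)$ strips an $e(i)$-letter; (iii) a Casimir-type combination of $e(L)$ and $f(-L)$ for large $L$ strips an $h(-k)$-letter, the emergent scalar being a non-zero multiple of $\vphi(K)+2$. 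Each reduction produces a non-zero vector in $N$ with strictly smaller leading monomial, contradicting minimality unless $\mu_0$ is already empty.

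The principal difficulty is step~(iii): one must track carefully how the central contribution in $[e(L),f(-L)]=h(0)+LK$ interacts with the $h$-$e$ commutator $[e(L),h(-k)]=-2e(L-k)$ so that after PBW reordering the stripped monomial appears with coefficient proportional to $\vphi(K)+2$. The shift by $2$ encodes the dual Coxeter number of $\sl_2$ and is exactly what breaks down at $\vphi(K)=-2$, dovetailing with the separate treatment of the critical case. Steps (i) and (ii) are direct commutator manipulations, and the overall induction assembles the three reductions into a termination argument forcing $v\in N$, whence $N=\hM(\vphi)$.
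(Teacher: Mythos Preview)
Your necessity argument is fine and close to the paper's: where you build a proper quotient via $\S'=\S_{n,m}\oplus\C f(m)$, the paper equivalently exhibits the singular vectors $f(m)^k v$ directly (Example~\ref{phi(hN)=0}); for $\vphi(K)=-2$ both you and the paper invoke the Segal--Sugawara operators (Example~\ref{phi(K)=-2}).

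The sufficiency sketch has a genuine gap at step~(iii). A direct computation shows that for $L$ large enough that $e(L)v=e(L-k)v=0$,
\[
e(L)f(-L)\,h(-k)v=\bigl(\vphi(h(0))+L\vphi(K)\bigr)\,h(-k)v,
\qquad
f(-L)e(L)\,h(-k)v=0,
\]
so every combination of $e(L)$ and $f(-L)$ sends a pure $h(-k)v$ to a scalar multiple of itself; nothing is stripped. The central term $LK$ you point to sits inside $[e(L),f(-L)]=h(0)+LK$, which \emph{commutes} with $h(-k)$ and therefore cannot produce a shorter monomial. More generally, applying any single element of $\S_{n,m}$ to a pure-$h$ monomial yields at best a $\vphi(K)$ coefficient (from $[h(p),h(-p)]$), never $\vphi(K)+2$; and your ordering ``length first'' prevents you from seeing where the missing $+2$ actually lives.

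The paper's mechanism is more indirect. After reducing via spectral flow to $N_1=-1$, $N_2=N$, it works with the filtration by \emph{true height} $\Ht(\gamma)=\ell(\gamma)N-\htt(\gamma)$ and minimal length within a given $\Ht$. Lemma~\ref{H,l} shows that at the top $(\Ht,\ell)=(H,l)$ a singular vector has only $h$-monomials --- this is your steps (i),(ii). The crucial extra ingredient is Lemma~\ref{H,l+1}: the annihilation $f(N+i)w=0$ forces the coefficients of the monomials $e(-i)h(\bar\gamma_0)f(N+m_k+i)$ at length $l+1$ (same $\Ht$) to equal $\frac{2a_\gamma}{\vphi(h(N))}$ times a multiplicity. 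Only then does one apply $h(-m_k)$ with $m_k<0$: the bracket $[h(-m_k),h(m_k)]$ contributes $\vphi(K)$, while each of the $-m_k$ companion $ef$-monomials contributes $2\vphi(h(N))\cdot\frac{2a_\gamma}{\vphi(h(N))}$, and the sum collapses to $-2m_k(\vphi(K)+2)a_\gamma$ times the shortened monomial (Lemma~\ref{w in C}). So the dual Coxeter shift arises from the interplay between \emph{two} length levels of the same true height, not from a single stripping operator; any ordering that ranks length before height will miss it.
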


Let  $\vphi(K)=-2$. In this case the module $\hM(\vphi)$ is not irreducible. Every set of numbers $\{\theta_i\in\C, \, i\in\N\}$ parametrizes a certain submodule $\hM(\theta_i, i\in\N)$ of $\hM(\vphi)$ and we have

\begin{thm}
 Suppose $\vphi(h(N))\neq0$ and
$\vphi(K)=-2$. Then the submodule $\hM(\theta_i, i\in\N)$ is maximal and hence $\hM(\vphi)/\hM(\theta_i, i\in\N)$ is irreducible for any
$\theta_i\in\C$. Moreover, any  irreducible $\hsl_2$-subquotient of $\hM(\vphi)$
is isomorphic to $\hM(\vphi)/\hM(\theta_i, i\in\N)$ for some choice of $\{\theta_i\in\C, \, i\in\N\}$.

%The $\tg$-module $\tM_{N_1,N_2}(\vphi)$ is irreducible if and only if $\vphi(h(N_1+N_2+1))\neq0$
%and $\varphi(K)\neq-2$.
\end{thm}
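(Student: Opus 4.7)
My plan is to exploit the fact that $\hsl_2$ at level $-2$ is at critical level, so the (renormalized) Sugawara field produces a family of central operators $\{S_i : i\in\N\}$ that act on every smooth $\hsl_2$-module (each $S_i u$ reduces to a finite sum of normally ordered products in $e(k), f(k), h(k)$ by smoothness). I would take $\hM(\theta_i, i\in\N)$ to be the $\hsl_2$-submodule of $\hM(\vphi)$ generated by $\{(S_i-\theta_i)v : i\in\N\}$; in the quotient, each $S_i$ acts on $\bar v$ by the scalar $\theta_i$ and, by centrality, this propagates to the entire quotient. The cyclicity of $v$ shows at once that this submodule is proper.

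Next I would prove the maximality of $\hM(\theta_i, i\in\N)$, which yields the irreducibility of the quotient. The argument would follow the non-critical strategy of Theorem \ref{irre general}: take any nonzero $\bar u$ in the quotient, expand it in a PBW-type basis indexed by ordered monomials supported outside $\S_{n,m}$, and use the hypothesis $\vphi(h(N))\ne 0$ to reduce $\bar u$ to a nonzero scalar multiple of $\bar v$ through a carefully chosen sequence of applications of $\hsl_2$-generators. The new input at critical level is that the imposed relations $(S_i-\theta_i)\bar v = 0$ kill exactly those quadratic submodules whose presence prevented the irreducibility of $\hM(\vphi)$ itself.

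For the subquotient assertion, let $L = N/N'$ be any irreducible $\hsl_2$-subquotient of $\hM(\vphi)$. Since each $S_i$ is central and $L$ is a smooth irreducible module of at most countable dimension over $\C$, a Schur-type lemma (Quillen/Dixmier) forces $S_i|_L = \theta_i \cdot \id$ for some scalars $\theta_i \in \C$. Hence $(S_i-\theta_i)N \subseteq N'$, so $N'$ already contains all central shifts of elements of $N$; combined with the maximality established in the previous step, a cyclicity argument (starting from a preimage of a nonzero vector of $L$) identifies $L$ with $\hM(\vphi)/\hM(\theta_i, i\in\N)$ for this particular $\{\theta_i\}$.

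The main obstacle is the maximality/irreducibility step: at critical level the Sugawara quotient relations form an infinite family of nontrivial quadratic constraints in normally ordered products of $e(k), f(k), h(k)$, and running the non-critical PBW straightening and reduction argument of Theorem \ref{irre general} in their presence is the technically demanding heart of the proof.
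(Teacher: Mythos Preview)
Your overall plan matches the paper's approach closely: the central Sugawara operators at critical level (the paper's $T(N-i)$, your $S_i$), the definition of $\hM(\theta_i,i\in\N)$ as the submodule generated by $(T(N-i)-\theta_i)u$, the PBW reduction exploiting $\vphi(h(N))\ne 0$ for irreducibility of the quotient, and the Schur/Dixmier extraction of scalars on an irreducible subquotient are all exactly what the paper does. Your remark that the Sugawara relations ``kill exactly those quadratic submodules whose presence prevented the irreducibility of $\hM(\vphi)$'' is the right picture: in the paper's language, Lemma~\ref{h to tau} shows that modulo these relations the PBW basis collapses to monomials with $\ga_0=\bo$, and the obstruction in Lemma~\ref{w in C} (which needed $\vphi(K)+2\ne 0$ to handle the $h(\ga_0)$-part) disappears.

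There is, however, a genuine gap in your subquotient argument. Starting from an \emph{arbitrary} preimage $w\in N$ of a nonzero vector of $L=N/N'$ does not give you a map $\hM(\vphi)\to L$: an arbitrary $w$ has no reason to satisfy the defining relations of $u$, so $U(\hsl_2)w$ need not be isomorphic to $\hM(\vphi)$, and there is nothing for maximality to bite on. The paper closes this gap with a structural lemma (Lemma~\ref{K=-2submodule}): \emph{every} $\hsl_2$-submodule of $\hM(\vphi)$ is of the form $U(\hsl_2)Iu$ for an ideal $I\subseteq\C[T(N-i),\,i\in\N]$. This in turn rests on the computation (Lemma~\ref{w in C[T]u}) that the singular vectors of $\hM(\vphi)$ are precisely $\C[T(N-i),\,i\in\N]u$. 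Writing $N=U(\hsl_2)Iu$ and $N'=U(\hsl_2)Ju$ with $J\subsetneq I$, one then chooses $Pu\in Iu\setminus Ju$; since $Pu$ is a singular vector with exactly the same annihilator in $\S_{-1,N}$ as $u$, the assignment $x\cdot(Pu)\mapsto x\cdot u$ gives an honest isomorphism $U(\hsl_2)(Pu)\cong\hM(\vphi)$, and the induced surjection onto $L$ then factors through $\bM(\vphi;\theta_i,i\in\N)$ and is an isomorphism by the irreducibility already established. So the missing step is not ``cyclicity from a preimage'' but the identification of a \emph{singular} generator, which requires the submodule lemma.
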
	
	
We also establish the isomorphisms between modules $\hM(\vphi)$ and their irreducible quotients (cf. Theorem \ref{iso hg}). Analogous results are obtained for the $\tilde\g$-modules (cf. Theorem \ref{irre tM} and Theorem \ref{irre tM K=-2}).

	{Throughout this paper, we denote by $\Z$, $\N$, $\Z_+$ and $\C$ the
		sets of integers, positive integers, nonnegative integers and
		complex numbers respectively. All vector spaces and Lie algebras are
		over $\C$.} For a Lie algebra $\GG$, we denote its universal
	enveloping algebra by $U(\GG)$. %and the center of $\GG$ by $\fZ(\GG)$.

	\section{Affine Lie algebras and smooth modules}
	
\subsection{Affine Lie algebras}	
For a simple finite-dimensional Lie algebra $\g$ we consider
   the loop algebra $\LL(\g):=\g\ot\C[t^{\pm1}]$ and its universal central extension $\hg:=
	\LL(\g) \oplus \C K$
  with the Lie algebra structure determined by
	\begin{equation*}\aligned
		& [x \otimes t^m, y \otimes t^n ] = [x,y] \otimes t^{m+n} + (x|y) m \delta_{m+n,0} K,\\	
		\endaligned\end{equation*}
 for $x,y\in \g$ and $[K, \hg]=0,$ where $(x|y)$ is the normalized  non-degenerate symmetric bilinear form of $\g$.
 %defined by
%	$$(e|f)=1,(h|h)=2.$$
% & [ d, x \otimes t^m ] = m x \otimes t^m,\quad .
	Let $d$ be the degree derivation $t\frac{d}{dt}$ acting on $\LL(\g)$.
	Then
	\begin{equation*}
		\tg:= \LL(\g) \oplus \C K \oplus \C d
	\end{equation*}
	possesses a natural semidirect sum Lie algebra structure
	 determined by $ [ d, x \otimes t^m ] = m x \otimes t^m,$ $[K, d]=0.$
	The algebras $\tg$ and $\hg$ are  the   \emph{untwisted}  affine Lie algebras of $\g$.

 Regard $\g$ as a subalgebra of $\hg\subseteq\tg$ by identifying $x\ot 1=x$ for any $x\in\g$, and denote $x(n)=x\otimes t^n$.
	A $\hg$-module or $\tg$-module is called of level $\kappa\in\C$ if the central element $K$ acts
	on the module as multiplication by the scalar $\kappa$.
	
	Both the algebras $\hg$ and $\tg$ have natural $\Z$-gradings: $$\hg=\bigoplus_{i\in\Z}\hg_i, \text{ and } \tg=\bigoplus_{i\in\Z}\tg_i,$$ where $\hg_i=\tg_i=\g\ot t^i$ for $i\neq0$,
	$\hg_0=(\g\ot t^0)\oplus\C K$ and $\tg_0=(\g\ot t^0)\oplus\C K\oplus\C d$.
	For any $n\in\Z_+$, denote $\hg_{\geq n}=\bigoplus_{i\geq n}\hg_i$.
	A module $M$ over $\hg$ or $\tg$ is called a {\bf smooth module} if for any $v\in M$, there
	exists $n\in\N$ such that $\hg_{\geq n}v=0$.

  Let $\h$ be a Cartan subalgebra of $\g$ and $\Delta$ the root system of $\g$ with respect to $\h$.
  Fix the set of simple roots $\Pi \subset \Delta$ and denote by
   $\Delta_+$ the positive roots with respect to $\Pi$. For $\alpha \in \Delta_+$, let $h_\alpha \in \h$ be the corresponding coroot and let $e_\alpha$ and $f_\alpha$ be basis of root subspaces $\g_\alpha$ and $\g_{-\alpha}$, respectively. We assume that  $[e_\alpha, f_\alpha] = h_\alpha$. Let  $\Delta^{im}=\{k\delta|
k\in\Z\setminus\{0\}\}$ be the set of imaginary roots of $\hg$, where $\hg_{\delta}=\h\otimes t$.

   Any $\hg$-module $V$ can be made into a $\tg$-module by taking a tensor product $\C[d]\ot V$, with a natural action of $\tg$. The action of $d$ is free on $\C[d]\ot V$. In its turn, every $\tg$-module can be   regarded as a $\hg$-module by the restriction.

 \medskip

 \subsection{Affine vertex algebras and Segal-Sugawara construction.}\label{affineVA}
	
	Now let us recall some basic notation of affine vertex algebras.
	For any $\kappa\in\C$, let $\C_\kappa$ be the $1$-dimensional module over $\hg_{\geq0}\oplus\C K$
	such that $\hg_{\geq0}\C_\kappa=0$ and $K$ acts as the multiplication by $\kappa$.
	The generalized Verma module
	$$V_\kappa(\g)=U(\hg)\otimes_{U(\hg_{\geq0}\oplus\C K)}\C_\kappa$$
	admits a natural vertex algebra structure as follows. For an element $x \in \g$, we denote by $x(z) \in \hg[[z^{\pm 1}]]$ the formal distribution defined by
\begin{align*}
x(z) = \sum_{n \in \Z} x(n) z^{-n-1}.  \label{eq:field a}
\end{align*}
The commutation relations  for $\hg$ become
\begin{align*}
  [x(z),y(w)] = [x,y](w)\delta(z-w) + (x|y)K\partial_w \delta(z-w),
\end{align*}
for $x, y \in \hg$. The state-field correspondence $Y \colon {V}_\kappa(\g) \rightarrow End \, {V}_\kappa(\g)[[z^{\pm 1}]]$ is given by
\begin{align*}
  Y(x_{1}(-n_1-1) \dots x_{k}(-n_k-1){\bf 1}, z) = {1 \over n_1! \dots n_k!}\, \normOrd{\partial_z^{n_1} x_1(z) \dots \partial_z^{n_k}x_k(z)}
\end{align*}
for $k \in \N$, $n_1,n_2,\dots,n_k \in \N_0$ and $x_1,x_2,\dots, x_k \in \g$, where ${\bf 1}=1\otimes 1\in V_\kappa(\g)$ is the vacuum vector. Here $\normOrd{\,\,}$ denotes the standard normal ordered product of  formal distributions.

We have
	$$x(z)=Y(x(-1){\bf 1}, z)=\sum_{n\in\Z}x(n)z^{-n-1},\  x\in\g,$$
	for $x\in \g$.
	It is well known that  smooth modules over $\hg$ of level $\kappa$ are
	exactly the module over the vertex algebra $V_\kappa(\g)$.
	
	Recall  the well-known Segal-Sugawara construction of a conformal element in $V_\kappa(\g)$.
	Assume $\kappa\neq-2$ and
	denote
	$$\omega=\frac{1}{2(\kappa+2)}\sum_i \normOrd{x_i (z)x^*_i(z)}{\bf1},$$
	where $x_i, x_i^*$ are dual bases of $\g$.
	Then we have
	$$Y(\omega, z):=L(z)=\sum_{n\in\Z}L(n)z^{-n-2},$$
	and the operators $L(n)\in U(\hg), n\in\Z$ satisfy the Virasoro relations
	$$[L(m),x(n)]=-nx(m+n),\quad m,n\in\Z,$$
	and
	$$[L(m),L(n)]=(m-n)L(m+n)+\frac{\kappa(m^3-m)}{4(\kappa+2)}\delta_{m+n,0},\quad m,n\in\Z.$$

\medskip

  \subsection{Construction of smooth modules}

  A well known class of smooth $\hg$-modules is given by the Whittaker modules. In this section we introduce a weak version of these modules, which surprisingly leads to new
  families of smooth $\hg$-modules.

   For every simple root $\alpha\in \Pi$ fix a pair of integers $N_{\alpha}, M_{\alpha}$ with
	 $N_{\alpha}+M_{\alpha}\geq 0$. Denote by $\Sigma$ the set $\{t_{\alpha}=(N_{\alpha}, M_{\alpha}), \, \alpha\in \Pi\}$.
	Let $\S_{\Sigma}$ be the Lie subalgebra of $\hg$ generated by $K$ and the elements of the form
	$$h_{\alpha}(n_{\alpha,0}), \, e_{\alpha}(n_{\alpha}), \, f_{\alpha}(m_{\alpha}),$$ where
	$ n_{\alpha,0}, n_{\alpha}, m_{\alpha}\in\Z, n_{\alpha,0}\geq 0, n_{\alpha}>N_{\alpha}, m_{\alpha}>M_{\alpha}$,
	$\alpha\in \Pi$.
	
	Consider any Lie algebra homomorphism $\varphi_{\Sigma}: \S_{\Sigma}\to\C$. Then
	$$\vphi_{\Sigma}(e_{\alpha}(n_{\alpha}))=\vphi_{\Sigma}(f_{\alpha}(m_{\alpha}))=0,$$
	for any $\alpha\in \Pi$ and all $n_{\alpha}>N_{\alpha}$, $m_{\alpha}>M_{\alpha}$. Moreover, $\vphi_{\Sigma}(h(n_{\alpha,0}))=0$ for $n_{\alpha,0}>N_{\alpha}+M_{\alpha}+1\geq1$. Also, if $\beta=\alpha_1+\ldots +\alpha_k$
	with $\alpha_i\in \Pi$, $i=1, \ldots, k$, then $\vphi_{\Sigma}(\hg_{\beta+n_{\beta}\delta})=0$ for any $n_{\beta}>\sum_{i=1}^k N_{\alpha_i}+k$. Similarly, $\vphi_{\Sigma}(\hg_{-\beta+n_{-\beta}\delta})=0$ for any $n_{-\beta}>\sum_{i=1}^k M_{\alpha_i}+k$.
	
Define a $1$-dimensional $\S_{\Sigma}$-module $\C_\Sigma=\C v$ by
	$$xv=\varphi_{\Sigma}(x)v, \forall x\in\S_{\Sigma}.$$
	Then we can form the induced $\hg$-module
	$$\hM(\varphi_{\Sigma})=\Ind_{\S_{\Sigma}}^{\hg}\C_\Sigma=U(\hg)\ot_{U(\S_{\Sigma})}\C v.$$
	and the induced $\tg$-module
	$$\tM(\varphi_{\Sigma})=\Ind_{\S_{\Sigma}}^{\tg}\C_\Sigma=U(\tg)\ot_{U(\S_{\Sigma})}\C v=\C[d]\ot\hM(\vphi_{\Sigma}).$$
	
	It is worthwhile to mention that $\hM(\vphi_{\Sigma})$ is a weight module with respect to the action of
	$\h=\h\ot t^0$. We will use the terminologies such as weights, weight vectors and so on freely.
		
		Note that both $\hM(\vphi_{\Sigma})$ and $\tM(\vphi_{\Sigma})$ are smooth modules
	over $\hg$ and $\tg$ respectively. A priori the structure of these modules is absolutely not clear.

   We will simplify the construction above as follows.   Fix a pair of integers $N$ and $M$ with
	 $N+M\geq 0$.
	Let $\S_{N,M}$ be the Lie subalgebra of $\hg$ generated by $K$ and the elements of the form
	$$h(k), \, x_{\beta}(s), \, y_{\beta}(r),$$ for $h\in \h$, $x_{\beta}\in \g_{\beta}, $ $y_{\beta}\in \g_{-\beta}$, $\beta\in \Delta_+$,
	$ k, s, r\in\Z$, $k\geq 0, s>N, r>M$.

 Now take an arbitrary Lie algebra homomorphism $\varphi_{N,M}: \S_{N,M}\to\C$. Then
	$$\vphi_{N,M}(x_{\beta}(n))=\vphi_{N,M}(y_{\beta}(m))=0,$$
	for any $\beta\in \Delta_+$ and all $n>N$, $m>M$. Also, $\vphi_{N,M}(h(k))=0$ for $k>N+M+1\geq1$.

 For a $1$-dimensional $\S_{N,M}$-module $\C v$ with
	$xv=\varphi_{N,M}(x)v$,  $x\in\S_{N,M}$
	the induced $\hg$-module
	$\hM(\varphi_{N,M})=U(\hg)\ot_{U(\S_{N,M})}\C v$
	and the induced $\tg$-module
	$\tM(\varphi_{N,M})=\C[d]\ot\hM(\vphi_{N,M})$ are smooth modules.

   \medskip

   \subsection{Connection to Takiff modules}
   Let $\g$ be a simple finite-dimensional Lie algebra with a Cartan decomposition
   $$\g=\n_-\oplus \h\oplus \n_+.$$
   For any positive integer $N$, consider the Takiff algebra
	$$\T_N(\g)=(\g\ot\C[t])/\g\ot t^{N+1}\C[t]=\g\ot(\C[t]/t^{N+1}\C[t]).$$
	Then we have a triangular decomposition of $\T_N(\g)$ defined as follows
	$$\T_N(\g)=\T_N(\g)_+\oplus\T_N(\g)_0\oplus\T_N(\g)_-,$$
	where $\T_N(\g)_+=\n_+\ot(\C[t]/t^{N+1}\C[t])$, $\T_N(\g)_{-}=\n_-\ot(\C[t]/t^{N+1}\C[t])$
	and $\T_N(\g)_{0}=\h\ot(\C[t]/t^{N+1}\C[t])$.
	For any Lie algebra homomorphism $\psi: \T_N(\g)_{0}\to\C$ define a $1$-dimensional module $\C_\psi=\C u$ over $\T_N(\g)_+\oplus\T_N(\g)_0$ as follows:
	$$(x+y)u=\psi(y)u,\ \forall\ x\in\T_N(\g)_+ \text{ and } y\in\T_N(\g)_0.$$
	Then we form the induced $\T_N(\g)$-module
	$$V(\psi)=\Ind_{\T_N(\g)_+\oplus\T_N(\g)_0}^{\T_N(\g)}\C_\psi.$$
	The module  $V(\psi)$ is a highest weight module with highest vector $u$ with respect to the Cartan subalgebra $\h=\h\ot\C$. The irreducibility of $V(\psi)$ was determined in \cite{Wi}:
	
	\begin{thm}\label{Wilson}
		The $\T_N(\g)$-module $V(\psi)$ is irreducible if and only if $\psi(h_{\alpha}\ot t^N)\neq0$, for all $\alpha\in \Delta_+$.
	\end{thm}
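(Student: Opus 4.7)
The plan is to reduce both directions to a statement about highest weight vectors. Since $V(\psi)$ is a weight module for $\h \ot 1$ with weights $\lambda - \mu$ (where $\lambda = \psi|_{\h \ot 1}$ and $\mu \in \Z_+\Delta_+$) bounded above by $\lambda$, every nonzero submodule $W$ of $V(\psi)$ contains a nonzero weight vector of maximal weight, which is automatically annihilated by $\T_N(\g)_+$. If every such vector must be a scalar multiple of $u$, then $u \in W$ and $W = V(\psi)$; conversely, any highest weight vector of weight strictly below $\lambda$ generates a proper submodule. Thus $V(\psi)$ is irreducible if and only if the only highest weight vectors are scalar multiples of $u$.

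For necessity I would argue by contrapositive. Given $\psi(h_\alpha \ot t^N) = 0$ for some $\alpha \in \Delta_+$, I would exhibit a highest weight vector of weight $\lambda - \alpha$. When $\alpha$ is simple the candidate is $w = (f_\alpha \ot t^N) u$: the commutator $[e_\beta \ot t^k, f_\alpha \ot t^N] = [e_\beta, f_\alpha] \ot t^{k+N}$ vanishes in $\T_N(\g)$ for $k \geq 1$, and for $k = 0$ only $\beta = \alpha$ contributes (since $\alpha$ is simple), yielding $\psi(h_\alpha \ot t^N) u = 0$. For non-simple $\alpha$, I would construct a singular vector of the form $(f_\alpha \ot t^N) u + \sum_\gamma c_\gamma (f_\gamma \ot 1)(f_{\alpha - \gamma} \ot t^N) u + \ldots$ by induction on the height of $\alpha$, choosing coefficients so that the actions of the various simple-root raising operators cancel, and exploiting that $\g \ot t^N$ is an abelian ideal on which $\g \ot 1$ acts via the adjoint representation.

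For sufficiency, suppose $w$ is a highest weight vector, written in PBW form $w = \sum c_{\mathbf{k}} F_{\mathbf{k}} u$ in the generators $f_\gamma \ot t^j$ of $\T_N(\g)_-$. I would derive constraints on the $c_{\mathbf{k}}$ layer by layer via successive applications of $e_\beta \ot t^N, e_\beta \ot t^{N-1}, \ldots, e_\beta \ot 1$ for various $\beta \in \Delta_+$. The key structural fact is that $[e_\beta \ot t^{N-d}, f_\gamma \ot t^j] = [e_\beta, f_\gamma] \ot t^{N-d+j}$ vanishes for $j > d$. So $e_\beta \ot t^N$ commutes with every factor of $t$-depth $\geq 1$ and therefore sees only depth-$0$ content; setting $(e_\beta \ot t^N) w = 0$ for all $\beta$ yields linear constraints on the depth-$0$ coefficients, each involving a scalar $\psi(h_\beta \ot t^N)$ from the hypothesis, which force all depth-$0$ coefficients (except $c_{\mathbf{0}}$) to vanish. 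With depth-$0$ eliminated, I would iterate: apply $e_\beta \ot t^{N-1}$, which now effectively acts only on the remaining depth-$1$ factors, again producing constraints modulated by $\psi(h_\beta \ot t^N) \neq 0$ to kill the depth-$1$ coefficients. After $N+1$ such layers, only $c_{\mathbf{0}} u$ survives.

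The main technical obstacle is organizing each layer of the reduction so that no unexpected cancellations occur. Within a single layer (fixed $t$-depth $d$), the action of $e_\beta \ot t^{N-d}$ splits across multiple factors in each PBW monomial, and the resulting system of linear equations must be solvable. Exhibiting solvability requires a careful choice of total order on the monomials and, for each leading monomial, a judicious choice of $\beta$ so that the surviving coefficient is a nonzero multiple of some $\psi(h_\beta \ot t^N)$. A parallel subtlety in the necessity direction is the induction on root height: each correction term in the singular vector generates further corrections under the simple-root raising operators, and it is the single hypothesis $\psi(h_\alpha \ot t^N) = 0$ that ultimately allows the recursion to close.
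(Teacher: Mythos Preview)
The paper does not give its own proof of this theorem; it is quoted from Wilson~\cite{Wi} and used as a black box. There is therefore no in-paper argument to compare your proposal against.

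On its own merits, your outline has the right shape. The reduction to showing that every highest-weight vector lies in $\C u$ is correct and standard, and for simple $\alpha$ the check that $(f_\alpha\otimes t^N)u$ is singular when $\psi(h_\alpha\otimes t^N)=0$ is valid. Two places would need real work to become a proof. First, for non-simple $\alpha$ the inductive construction of a singular vector is only gestured at; one must verify that the recursion for the correction terms actually closes using only the single vanishing hypothesis $\psi(h_\alpha\otimes t^N)=0$, while the other $\psi(h_\beta\otimes t^N)$ may be arbitrary. Second, in the sufficiency direction your description of the first layer is slightly optimistic: the commutator $[e_\beta\otimes t^N, f_\gamma\otimes 1]=[e_\beta,f_\gamma]\otimes t^N$ can land in $\n_\pm\otimes t^N$, not only in $\h\otimes t^N$, so the constraints couple monomials of different shapes and across depths rather than forming an immediately triangular system with the $\psi(h_\beta\otimes t^N)$ on the diagonal. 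The argument can be pushed through with a suitable filtration (e.g.\ by total $t$-degree, then by root height) that makes the system upper-triangular, which is precisely the bookkeeping you flag in your final paragraph. In short: no wrong turn, but the substantive content of Wilson's theorem lies exactly in the details you have deferred.
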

	
	Now  we can view $V(\psi)$ as a module over $\g\ot\C[t]\oplus\C K$ by a pullback through the canonical epimorphism
	$\g\ot\C[t]\to\T_N(\g)$ and requiring that $K$ acts as multiplication by a scalar $\theta$.
	Then we form the induced $\hg$-module
	$$M(\psi,\theta)=\Ind_{\g\ot\C[t]\oplus\C K}^{\hg}V(\psi).$$
	This module is also a weight module with respect to $\h=\h\ot\C$.

 Denote $\S(N):=\S_{-1,N}$ and define the homomorphism $\vphi_N=\vphi_{-1,N}:\S(N) \to \C$ as follows. Set $\vphi_N(K)=\theta$,
 $$\vphi_N(h(k))=\vphi_N(x_{\beta}(s))=\vphi_N(y_{\beta}(r))=0,$$
 for $s\geq 0$, $k, r>N$, $\beta\in \Delta_+$. Let $\p: \C h\ot\C[t]\to\T_N(\g)_0$ be  the canonical projection. We set $\vphi_N(h(k))=\psi(\p(h(k)))$ for $0\leq k\leq N$.

\medskip

\begin{lem}\label{lem-Takiff}
We have the isomorphism of $\hg$-modules:
$$M(\psi, \theta)\cong\hM(\vphi_N).$$
\end{lem}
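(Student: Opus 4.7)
The plan is to realize both modules as induced from the same $1$-dimensional character of $\S(N)$, and then invoke transitivity of induction. Set $\fa := \g\ot\C[t]\oplus\C K$. Since $\S(N)=\S_{-1,N}\subset\fa$ and $M(\psi,\theta)=\Ind_{\fa}^{\hg}V(\psi)$, the proof reduces to establishing the intermediate $\fa$-module isomorphism
$$V(\psi)\;\cong\;\Ind_{\S(N)}^{\fa}\C_{\vphi_N}, \qquad\qquad (\ast)$$
after which transitivity of induction yields
$$M(\psi,\theta)\;\cong\;\Ind_{\fa}^{\hg}\Ind_{\S(N)}^{\fa}\C_{\vphi_N}\;\cong\;\Ind_{\S(N)}^{\hg}\C_{\vphi_N}\;=\;\hM(\vphi_N).$$

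As a first step I would make $\S(N)$ explicit as a subspace. A direct bracket calculation using the generators $K$, $h(k)$ for $k\geq 0$, $x_\b(s)$ for $s\geq 0$, and $y_\b(r)$ for $r>N$ (with $\b\in\Delta_+$) shows
$$\S(N)\;=\;\C K\;\oplus\;(\h\ot\C[t])\;\oplus\;(\n_+\ot\C[t])\;\oplus\;(\n_-\ot t^{N+1}\C[t]),$$
and in particular $\S(N)$ is a Lie subalgebra of $\fa$ (no central term ever contributes because $s+r>N\geq 1$). The same bookkeeping confirms that $\vphi_N$ annihilates $[\S(N),\S(N)]$: every bracket among generators lands in a $t$-degree range on which $\vphi_N$ was declared to be $0$. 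Thus $\vphi_N$ is a genuine Lie algebra character of $\S(N)$.

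To prove $(\ast)$, pull the $\T_N(\g)$-module structure on $V(\psi)$ back through $\g\ot\C[t]\twoheadrightarrow\T_N(\g)$ and let $K$ act by $\theta$. Its highest weight vector $u$ then satisfies $x_\b(s)u=0$ for $s\geq 0$, $h(k)u=\psi(\p(h(k)))u=\vphi_N(h(k))u$ for every $k\geq 0$ (the case $k>N$ giving $0$ on both sides since $\p(h(k))=0$), $y_\b(r)u=0$ for $r>N$, and $Ku=\theta u$. These are exactly the defining $\vphi_N$-relations, so the universal property of induction produces an $\fa$-surjection $\Phi\colon \Ind_{\S(N)}^{\fa}\C_{\vphi_N}\twoheadrightarrow V(\psi)$ with $v\mapsto u$. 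Taking $\n_-\ot\{1,t,\ldots,t^N\}$ as a vector space complement of $\S(N)$ in $\fa$, PBW gives graded vector space isomorphisms $\Ind_{\S(N)}^{\fa}\C_{\vphi_N}\cong S(\n_-\ot\{1,\ldots,t^N\})$ and $V(\psi)\cong S(\n_-\ot(\C[t]/t^{N+1}\C[t]))$. Since the underlying graded spaces $\n_-\ot\{1,\ldots,t^N\}$ and $\n_-\ot(\C[t]/t^{N+1}\C[t])$ are visibly isomorphic (same $\h\times\Z$-graded basis indexed by root vectors in $\n_-$ and exponents $0\le i\le N$), the two targets have the same graded dimension, so $\Phi$ is a graded surjection between spaces with equal finite-dimensional graded components, hence an isomorphism. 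This establishes $(\ast)$.

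The main obstacle is $(\ast)$ itself. Although the argument is bookkeeping, the subtlety is that the vector space $\n_-\ot\{1,\ldots,t^N\}$ (which is not a Lie subalgebra of $\g\ot\C[t]$) and the Takiff Lie algebra $\T_N(\g)_-=\n_-\ot\C[t]/t^{N+1}\C[t]$ carry genuinely different brackets; only PBW makes their enveloping algebras agree as graded vector spaces, closing the dimension count and promoting $\Phi$ to an isomorphism. One must also be careful that pulling back the Takiff module structure through $\g\ot\C[t]\to\T_N(\g)$ produces precisely the prescribed character $\vphi_N$ on $\S(N)$, with the correct vanishing at degrees $k>N$.
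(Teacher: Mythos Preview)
Your argument is correct and follows essentially the same route as the paper: verify that the generator $u$ of $M(\psi,\theta)$ satisfies the $\vphi_N$-relations, invoke universality to obtain a surjection, and use PBW to conclude it is an isomorphism. The only difference is organizational---you factor through the intermediate isomorphism $(\ast)$ and transitivity of induction, whereas the paper builds the map $\hM(\vphi_N)\to M(\psi,\theta)$ in a single step; the underlying PBW dimension count is the same in both cases.
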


\begin{proof} Since
 $xu=\vphi_N(x)u$ for all $x\in \S(N)$, we have a homomorphism from $\hM(\vphi_N)$ to $M(\psi, \theta)$ by the universality property. The isomorphism follows from the PBW theorem.
\end{proof}	

The isomorphism in Lemma \ref{lem-Takiff} will be crucial in the proof of irreducibility
of constructed modules.

\begin{rem}
Let $\g=sl(2)$ with a simple root $\alpha$ and $N_{\alpha}=M_{\alpha}=n-1$ for some $n\in\N$. If $\vphi$ is a Lie algebra homomorphism from $\S_{n-1,n-1}$ to $\C$ with $\vphi(K)=0$, then
	the $\hg$-module $\hM(\vphi)$
	and the $\tg$-module $\tM(\vphi)$ are just the modules considered in \cite{NXZ}.
\end{rem}

   \medskip
 	%\newpage
	%%%%%%%%%%%% section 2
%	\section{Constructing simple smooth modules over $A^{(1)}_1$}\label{section basic}
	%%%%%%%%%%%% section 2
	
	\subsection{Affine Lie algebra  $A^{(1)}_1$}
	
	Now we assume that $\g=\sl_2$ with a standard basis $\{e,f,h\}$ and the bilinear form normalized as
$(e|f)=1$, $(h|h)=2.$ Since the Cartan subalgebra $\h=\C h$ is $1$-dimensional,
 we will identity weights with respect to $\h$ with the eigenvalues of $h$.
	
	Consider the Casimir operator $\Omega$ of $ \widetilde{\sl}_2$, which can be written explicitly as
	(cf. \cite{K} or \cite{ALZ})
	\begin{equation}\aligned\label{casimir}
		\Omega =& 2dK + 4d +\frac{1}{2}h(0)^2 +h(0) +2f(0)e(0)\\
		&+2\sum_{n=1}^{\infty} (e(-n)f(n) +f(-n)e(n) +\frac{1}{2}h(-n)h(n)).
		\endaligned\end{equation}
	The action of $\Omega$ is well-defined on any smooth $\tg$-module,
	and also on any smooth $\hg$-module of level $-2$.
	%on which the central element $K$ acts by multiplication of $-2$.
	Moreover, it is well known that if the action of $\Omega$ is well-defined  on a  $\tg$-module (respectively  a $\hg$-module of level $-2$), then it commutes with the action  of $U(\tg)$ (respectively  $U(\hg)$).

If $\kappa\neq-2$, then
	 $$\omega=\frac{1}{4(\kappa+2)}\big(2e(-1)f(-1)+2f(-1)e(-1)+h(-1)^2\big){\bf1}$$
	 is a conformal vector in $V_\kappa(\g)$.
	Suppose $\kappa=-2$. Denote $$\tau=\frac{1}{4}\big(2e(-1)f(-1)+2f(-1)e(-1)+h(-1)^2\big){\bf1}.$$
	Then
	$$Y(\tau,z)=T(z)=\sum_{n\in\Z}T(n)z^{-n-2},$$
	where the operators $T(n)\in U(\hg), n\in\Z$ satisfy the commutation relation
	\begin{equation}\label{T(n)}
		[T(n),x(m)]=0,\quad m,n\in\Z.
	\end{equation}
	In particular, all $T(n), n\in\Z$ commute with all elements in $U(\hg)$.
	It is well known from  \cite{LL},(6.2.44) that
	\begin{equation}\label{T(n)'}
		T(n)=\frac{1}{4}\sum_{i\in\Z}\big(2:e(-i)f(n+i):+2:f(-i)e(n+i):+:h(-i)h(n+i):\big),
	\end{equation}
	where the normal order $:e(i)f(j):$ is defined as follows:
	$$:e(i)f(j):=\left\{
	\begin{array}{cc}
		e(i)f(j),&\mbox{if}\,\, i<0,\\
		f(j)e(i), & \mbox{if}\,\, i\ge0.\\
	\end{array}\right.
$$
The normal order for the other two terms is defined similarly.

Now we define our modules.
	For any integers $N_1, N_2$ with $N_1+N_2\geq0$,
the subalgebra $\S_{N_1,N_2}$ of $\hg$ is  spanned by $K$ and the elements
	$$h(n_0), e(n_1), f(n_2),\text{ where } n_0,n_1,n_2\in\Z, n_0\geq0, n_1>N_1, n_2>N_2.$$
	If $\varphi_{N_1, N_2}: \S_{N_1,N_2}\to\C$ is a Lie algebra homomorphism and $\C_\varphi=\C v$ is a $1$-dimensional $\S_{N_1,N_2}$-module with
	$$xv=\varphi(x)v, \forall x\in\S_{N_1,N_2},$$
	then we have the induced modules $\hM(\varphi_{N_1,N_2})$ and $\tM(\varphi_{N_1,N_2})$.
	%$$\hM_{N_1,N_2}(\varphi)=\Ind_{\S_{N_1,N_2}}^{\hg}\C_\varphi=U(\hg)\ot_{U(\S_{N_1,N_2})}\C v.$$
	%and
	%$$\tM_{N_1,N_2}(\varphi)=\Ind_{\S_{N_1,N_2}}^{\tg}\C_\varphi=U(\tg)\ot_{U(\S_{N_1,N_2})}\C v=\C[d]\ot\hM_{N_1,N_2}(\vphi).$$

	A nonzero element $w\in\hM(\vphi_{N_1,N_2})$ is a {\bf singular vector} of weight $\lambda\in\C$ if
	$$e(N_1+i)w=f(N_2+i)w=(h(i)-\vphi(h(i)))w=0$$
	for any $i\in\N$ and $hw=\l w$.
	%$$\align
	%(e\ot\C[t])w=&(f\ot t^{N+1}\C[t])w=(h(i)-\vphi(h(i)))w=0,\forall i\in\N, \\
	%h(0)w=&\l w,
	%\endaligned$$
	%where we have extend  $\vphi$ to
	%$\vphi: \h\ot\C[t]\to\C$ by assigning $\vphi(h\ot t^i)=0$ for $i>N$.
	For example, $v$ is a singular vector of weight $\vphi(h(0))$.
	If  the weight of a singular vector is clear, we simply call it a singular vector.
	
	Since $\hM(\vphi_{N_1,N_2})$ and $\tM(\vphi_{N_1,N_2})$ are smooth modules
	over $\hg$ and $\tg$ respectively, we have a well-defined action of $\Omega$ on $\tM(\vphi_{N_1,N_2})$, and also on $\hM(\vphi_{N_1,N_2})$ if $\vphi(K)=-2$.
	%
	%it is well known that the action of $\Omega$ commutes with the action of any other element in $U(\widetilde{\sl}_2)$.
	%
	%In particular, the action of $\Omega$ on the singular vector $u\in \tM_{N_1,N_2}(\vphi)$ can be computed explicitly as follows
	%\begin{equation}\aligned\label{Omega}
	%&\Omega(1\ot u) \\
	%=&2d\ot(\vphi(K)+2)u + 1 \ot \left(\frac{1}{2}h(0)^2 +h(0) +2f(0)e(0)\right)u\\
	% &+2\ot\left(\sum_{i=1}^{N_2}e(-i)f(i)+\sum_{i=1}^{N_1}f(-i)e(i) +\hskip-3pt\sum_{i=1}^{N_1+N_2+1}\hskip-3pt h(-i)h(i)\right)u\\
	%\endaligned\end{equation}
%

By Lemma \ref{lem-Takiff}, we have an isomorphism of $\hg$-modules $M(\psi,\theta)\cong\hM(\vphi_N)$ for any Lie algebra homomorphism $\psi: \T_N(\g)_{0}\to\C$ and $\theta\in \C$,
	where the Lie algebra homomorphism $\vphi_N:=\vphi_{-1,N}: \S_{-1,N}\to\C$
	is given by
	$$\vphi_N(e(n_1))=\vphi_N(f(n_2))=0,\ \vphi_N(h(n_0))=\psi(\overline{h(n_0)}),\ \vphi_N(K)=\theta,$$
	for any $n_0, n_1, n_2\in\Z_+, n_2>N$. Here $\overline{h(n_0)}$ is the image of $h(n_0)$ under the canonical projection $\C h\ot\C[t]\to\T_N(\g)_0$.
Using this isomorphism we will identify both modules and denote the generator of
$\hM(\vphi)$ by $u$.

\section{Irreducibility and isomorphism}

From now on we will always assume that $\hg= \widehat{\sl}_2$ and $\tg= \widetilde{\sl}_2$.

\subsection{Reduction to the case $N_1=-1$.}

Fix   integers $N_1, N_2$ with $N_1+N_2\geq0$ and a  Lie algebra homomorphism
$\varphi_{N_1,N_2}: \S_{N_1,N_2}\to\C$.
In order to determine the irreducibility of the $\hg$-module $\hM(\vphi_{N_1,N_2})$
and the $\tg$-module $\tM(\vphi_{N_1,N_2})$, we first reduce the problem to a special case using the change of the basis of the root system  (passing to an equivalent module).

For any $\hg$-module $M$ and an automorphism
$\si$ of $\hg$, we define a twisted $\hg$-module structure on $M$, denoted by $M^\si$, as follows: $x\cdot v=\si(x)v$ for any $x\in\hg$ and $v\in M$, where $x\cdot v$ means the module action in $M^\si$ and $\si(x)v$ means the module action in $M$.

For any $k\in\Z$, %denote $e'(n)=e(n+N_1+1)$, $f'(n)=f(n-N_1-1)$ and $h'(n)=h(n)$ for any $n\in\Z$.
we have the following Lie algebra automorphism $\si_k$ of $\hg$ defined by
$$\aligned
&\si_k(e(n))=e(n+k),\quad \si_k(f(n))=f(n-k),\\
&\si_k(h(n))=h(n)+\delta_{n,0}kK,\quad \si_k(K)=K,
\endaligned$$
for any $n\in\Z$. It is clear that $\si_k=\si_1^k$.

Now it is easy to check that $\hM(\vphi_{N_1,N_2})^{\si_k}\cong \hM(\vphi_{N_1-k,N_2+k})$, where
$\vphi_{N_1-k,N_2+k}=\vphi_{N_1,N_2}\si_k|_{S_{N_1-k,N_2+k}}$ is a Lie algebra homomorphism from $\S_{N_1-k,N_2+k}$ to $\C$.
Note that $\vphi_{N_1-k,N_2+k}(h(N_1+N_2+1))=\vphi_{N_1,N_2}(h(N_1+N_2+1))$ and $\vphi_{N_1-k,N_2+k}(K)=\vphi_{N_1,N_2}(K)$. In particular, we have
\begin{equation}\label{reduce}
	\hM(\vphi_{N_1,N_2})^{\si_{N_1+1}}\cong \hM(\vphi_{-1,N_1+N_2+1}),
\end{equation}
and the irreducibility
of $\hM(\vphi_{N_1,N_2})$ is equivalent to the irreducibility of $\hM(\vphi_{-1,N_1+N_2+1})$. Hence, it is sufficient to consider only modules $\hM(\vphi_{-1,N})$.

\medskip

\subsection{Singular vectors in $\hM(\vphi_{-1,N})$}

 In this subsection we  consider the case $N_1=-1$ and  $N_2=N\geq1$.
Fix a  homomorphism $\varphi_N: \S_{-1,N}\to\C$ and denote $\vphi=\varphi_N$ and $\hM=\hM(\vphi_{N})$ for short.
As we agreed above, we denote by $u$ the generator of $\hM$ and  identify it with $1\ot u$.
In particular, we have $h(n)u=\vphi(h(n))u$ for all $n\in\Z_+$, $e(n)u=0$ for all $n\geq0$ and $f(n)u=h(n)u=0$ for all $n>N$.

For any $m\in\Z$, let $\Gamma_m$ be the set of tuples
$\gamma=(m_1,\dots,m_k), k\in\Z_+$, satisfying the following conditions
$$m_i\in\Z, m_i\leq m, 1\leq i\leq k\ \text{and}\ m_i\leq m_{i+1}, 1\leq i\leq k-1.$$
Here we  use the convention that $\gamma=\bo$ (has no entries) for $k=0$ above.

For $m\in\Z$ and $\gamma\in\Gamma_m$ as in the previous paragraph, we define
the \emph{height} of $\gamma$ by $\htt(\gamma)=\sum_{i=1}^km_i$,
and the \emph{length} of $\gamma$ by $\ell(\gamma)=k$.
For $\gamma=\bo$, we have $\htt(\gamma)=\ell(\gamma)=0$.
We also define the {\bf true height} of $\ga\in\Ga_m$ as $\Ht(\gamma)=\ell(\gamma)N-\htt(\gamma)$.
%The we define a total order on $\Gamma$, denoted by $\preceq$, as follows:
%$$\gamma'=(n_1,\dots,n_l)\prec\gamma=(m_1,\dots,m_k)$$ if and only if
%one of the following conditions holds:
%\begin{itemize}
%\item $\htt(\gamma')<\htt(\gamma)$;
%\item $\htt(\gamma')=\htt(\gamma)$ and $\ell(\gamma')>\ell(\gamma)$;
%\item $\htt(\gamma')=\htt(\gamma)$, $\ell(\gamma')=\ell(\gamma)$ and
%there exists $1\leq r\leq k$ such that $m_r<n_r$ and $m_i=n_i$ for all $1\leq i<r$.
%\end{itemize}

For any $\gamma=(m_1,\dots,m_k)\in\Gamma_m, m\in\Z$ and $x\in\g$, set
$$x(\gamma)=(x\ot t^{m_1})(x\ot t^{m_2})\cdots(x\ot t^{m_k})\in U(\hg)$$
and $x(\bo)=1$. In particular, in the case $\ell(\gamma)=1$, we have $x(\gamma)=x(m_1)$.
Note the difference between $x(\bo)$ and $x(0)$.

Let $\Gamma$ be the set of all triples $(\ga_+,\ga_0,\ga_-)$ with
$\ga_+,\ga_0\in\Gamma_{-1}$ and $\ga_-\in\Gamma_N$.
For any $\ga\in\Ga$, we write $\ga_+, \ga_0$ and $\ga_-$ for the unique elements in
$\Ga_{-1},\Ga_{-1}$ and $\Ga_{N}$ respectively such that $\ga=(\ga_+,\ga_0,\ga_-)$.
Denote $\htt(\ga)=\htt(\ga_+)+\htt(\ga_0)+\htt(\ga_-)$,
% and $\htt_\ep(\ga)=\htt(\ga_\ep)$ for $\ep\in\{0,\pm\}$.
We have similar meanings for $\ell(\ga)$ and $\Ht(\ga)$.
It is clear that $\Ht(\ga)=\ell(\ga)N-\htt(\ga)\geq0$ for all $\ga\in\Ga$.
%$\ell_\ep(\ga)$ and $\Ht(\ga_\ep)$ for $\ep\in\{0,\pm\}$.
%Now we define a total order on $\Gamma$ as follows, still denoted by $\preceq$, as follows,
%$$\ga'=(\ga'_+,\ga'_0,\ga'_-)\preceq\ga=(\ga_+,\ga_0,\ga_-)$$ if and only if
%one of the following conditions holds:
%\begin{itemize}
%\item $\htt(\gamma')<\htt(\gamma)$;
%\item $\htt(\gamma')=\htt(\gamma)$ and $\ell(\gamma')>\ell(\gamma)$;
%{\color{red}
%\item $\htt(\gamma')=\htt(\gamma)$, $\ell(\gamma')=\ell(\gamma)$ and $\ga'_-\vt\ga_-$;
%\item $\htt(\gamma')=\htt(\gamma)$, $\ell(\gamma')=\ell(\gamma)$, $\ga'_-=\ga_-$ and $\ga'_0\vt\ga_0$;
%\item $\htt(\gamma')=\htt(\gamma)$, $\ell(\gamma')=\ell(\gamma)$, $\ga'_-=\ga_-$, $\ga'_0=\ga_0$ and
% $\ga'_+\vt\ga_+$.
%} \end{itemize}

For any $\ga=(\ga_+,\ga_0,\ga_-)\in\Gamma$, we set $X(\ga)=e(\ga_+)h(\ga_0)f(\ga_-)$.
For convenience, we also write $\bo=(\bo,\bo,\bo)\in\Ga$ and set $X(\bo)=1$.
It is obvious that $X(\ga)u, \ga\in\Ga$, form a basis of $\hM$.

For any $w=\sum_{\ga\in\Ga}a_\ga X(\ga)u\in \hM$ with only finitely many nonzero coefficients $a_\ga\in\C$, $\ga\in\Ga$,
 we set
$$\Ht(w)=\max\{\Ht(\ga)\ |\ \ga\in\Ga, a_\ga\neq0\},$$
$$ \ell(w)=\max\{\ell(\ga)\ |\ \ga\in\Ga, a_\ga\neq0\},$$
called the true height and length of $w$ respectively.

%Then we have a decomposition of vector space $M=\bigoplus_{\gamma\in\Gamma}M_\gamma,$
%where $M_\gamma=X(\gamma)V$.
%Then set $M(\gamma)=\sum_{\gamma'\preceq\gamma}M_{\gamma'}$ and $M'(\gamma)=\sum_{\gamma'\prec\gamma}M_{\gamma'}$ for any $\bo\neq\gamma\in\Gamma$.
%Make the convention that $M'_{\bo}=0$ and $M_{\bo}=\C$.
%We say that an element $w\in M(\gamma)\setminus M'(\gamma)$ has degree $\gamma$, denoted by $\deg(w)=\gamma$. For convenience, we also write
%$\htt(w)=\htt(\gamma)$, $\ell(w)=\ell(\gamma)$, $\Ht(w)=\Ht(\gamma)$ and say that $\deg(0)$ does not exist.

For any $H, l\in\Z_+$, let us denote
$$J(H,l)=\{\ga'\in\Ga\ |\ \Ht(\ga')<H\ \text{or},\ \Ht(\ga')=H\ \text{and}\ \ell(\ga')\geq l\},$$
$$J'(H,l)=\{\ga'\in\Ga\ |\ \Ht(\ga')<H\ \text{or},\ \Ht(\ga')=H\ \text{and}\ \ell(\ga')> l\},$$
and
$$\bar J(H,l)=\{\ga'\in\Ga\ |\ \Ht(\ga')=H\ \text{and}\ \ell(\ga')=l\}.$$
For convenience, we also set
%$$M_{H}=\sum_{\Ht(\gamma)=H}M_{\gamma'},\quad M_{H,l}=\sum_{\Ht(\gamma)=H,\ell(\gamma)=l}M_{\gamma};$$
%$$M(H)=\sum_{\Ht(\gamma)\leq H}M_{\gamma'},\quad M'(H)=\sum_{\Ht(\gamma)<H}M_{\gamma'};$$
$$\hM(H,l)=\sum_{\ga\in J(H,l)}\C X(\ga)u\ \text{and}\ \hM'(H,l)=\sum_{\ga\in J'(H,l)}\C X(\ga)u.$$

Recall that a nonzero element $w\in \hM$ is a singular vector of weight $\lambda\in\C$ if
$$e(i-1)w=f(N+i)=(h(i)-\vphi(h(i)))=0$$
for any $i\in\N$ and $h(0)w=\l w$.

\begin{lem}\label{singular submod}
Let $w\in \hM$ be a nonzero singular vector. Then $U( \widehat{\sl}_2)w=\hM$ if and only if $w\in\C u$.
\end{lem}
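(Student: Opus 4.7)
The ``if'' direction is immediate since $u$ generates $\hM$: for $w=cu$ with $c\neq 0$, $U(\hg)w=U(\hg)u=\hM$.

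For the converse, the plan is to exploit the PBW presentation of $\hM$ together with the singular property of $w$. Set $\hg^{-}:=\spn\{e(k),h(k):k\leq -1\}\oplus\spn\{f(k):k\leq N\}$, so that $\hg=\hg^{-}\oplus\S(N)$ as a vector space. By PBW the linear map $U(\hg^{-})\to\hM,\ Y\mapsto Yu,$ is a vector-space isomorphism, i.e., $U(\hg^{-})$ acts freely on $u$. Hence one writes $w=Yu$ for a unique $Y\in U(\hg^{-})$ and sets $d:=\deg Y$ in the standard PBW filtration of $U(\hg^{-})$. If $d=0$ then $Y$ is a scalar and $w\in\C u$, so assume $d\geq 1$ and suppose for contradiction that $U(\hg)w=\hM$.

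Singularity of $w$ provides a Lie algebra homomorphism $\vphi':\S(N)\to\C$, $x\mapsto$ (scalar by which $x$ acts on $w$), which agrees with $\vphi$ except possibly at $h(0)$ and extends uniquely to an algebra homomorphism $\tilde\vphi':U(\S(N))\to\C$. Choose $Z\in U(\hg)$ with $Zw=u$ and decompose $Z=\sum Z^{-}_\alpha Z^{+}_\beta$ in PBW form with $Z^{-}_\alpha\in U(\hg^{-})$, $Z^{+}_\beta\in U(\S(N))$. Using $Z^{+}_\beta w=\tilde\vphi'(Z^{+}_\beta)w$ one obtains $u=Zw=X'w=X'Yu$, where $X':=\sum\tilde\vphi'(Z^{+}_\beta)Z^{-}_\alpha\in U(\hg^{-})$ has PBW degree $p\geq 0$.

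Finally invoke the PBW filtration. The standard filtration $F_k U(\hg)$ induces $F_k\hM:=F_k U(\hg)\cdot u$, which under the identification $\hM\cong U(\hg^{-})$ above matches the standard filtration of $U(\hg^{-})$; hence $\mathrm{gr}\,\hM\cong\mathrm{Sym}\,\hg^{-}$. Because $X',Y\in U(\hg^{-})$, the principal symbols $\mathrm{gr}(X'),\mathrm{gr}(Y)\in\mathrm{Sym}\,\hg^{-}$ are both nonzero; computing $X'Yu$ modulo $F_{p+d-1}\hM$ gives the polynomial product $\mathrm{gr}(X')\cdot\mathrm{gr}(Y)\in\mathrm{Sym}^{p+d}\,\hg^{-}$, which is nonzero because $\mathrm{Sym}\,\hg^{-}$ is an integral domain. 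Thus $X'Yu\notin F_{p+d-1}\hM$ whereas $u\in F_0\hM\subseteq F_{p+d-1}\hM$ for $p+d\geq 1$, contradicting $u=X'Yu$. Hence $d=0$ and $w\in\C u$. The main technical point is the PBW bookkeeping --- matching filtrations under $\hM\cong U(\hg^{-})$ and showing that the class of $X'Yu$ in $\mathrm{gr}_{p+d}\,\hM$ is precisely the polynomial product of symbols; once this is set, the domain property of $\mathrm{Sym}\,\hg^{-}$ closes the argument.
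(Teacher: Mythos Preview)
Your proof is correct and follows essentially the same approach as the paper: both reduce via the singular property to the PBW subspace complementary to $\S(N)$, and then use a degree/length argument to show that applying further elements cannot lower the degree below $\deg Y\geq 1$. The paper phrases this tersely via the length function $\ell(w)$ on the explicit basis $\{X(\ga)u\}_{\ga\in\Ga}$, whereas you make the same point through the associated graded and the integral domain property of $\mathrm{Sym}\,\hg^{-}$; the content is identical.
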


\begin{proof}
Let $w\in \hM$ be a nonzero singular vector. If $w\in\C u$, it is clear that $U( \widehat{\sl}_2)w=\hM$ by definition.

Now suppose $w\notin\C u$. Then we have $\ell(w)\geq1$. From the definition of singular vectors and the PBW Theorem we get $U( \widehat{\sl}_2)w=\sum_{\ga\in\Ga}\C X(\ga)w$.  Using the PBW Theorem again we can  check that
any element in $U( \widehat{\sl}_2)w$ has length no less than $\ell(w)$. In particular, $u\notin U( \widehat{\sl}_2)w$ and hence $U( \widehat{\sl}_2)w\neq \hM$, as desired.
\end{proof}

\begin{exa}\label{phi(hN)=0}
Suppose $\vphi(h(N))=0$.  For each $m\in\Z_+$, we claim that  $(f(N))^mu$ is a singular vector of weight $\vphi(h(0))-2m$.
Indeed, let $w$ be a singular vector of weight $\l\in\C$, then we have
$$\aligned
&e(i)f(N)w=[e(i),f(N)]w=\vphi(h(N+i))w=0,\ \forall\ i\in\Z_+,\\
&(h(i)-\vphi(h(i)))f(N)w=[h(i),f(N)]w=-2f(N+i))w=0,\ \forall\ i\in\N,\\
&f(i)f(N)w=f(N)f(i)w=0,\ \forall\ i\in\Z, i\geq N+1,\\
&h(0)f(N)w=f(N)(h(0)-2)w=(\vphi(h(0))-2)f(N)w,
\endaligned$$
and $f(N)w$ is also a singular vector. Using induction on $m$, we deduce that $(f(N))^mu$ is a singular vector for any $m\in\Z_+$.
By Lemma \ref{singular submod}, we see that each element $(f(N))^mu, m\in\N$, generates
a proper nonzero submodule of $\hM$ and hence $\hM$ is not irreducible over $ \widehat{\sl}_2$ if $\vphi(h(N))=0$.
The main result of this section is to show that the converse holds, i.e., $\hM$ is irreducible if $\vphi(h(N))\neq0$ and the level is non-critical.
\end{exa}

\begin{exa}\label{phi(K)=-2}
Suppose $\vphi(K)=-2$. By \eqref{T(n)'}, the action of $T(n), n\in\Z$, is well defined on $\hM$.
Moreover, all $T(n)$ commute with the elements of $U(\hg)$ and, in particular, $[T(m),T(n)]=0$ for all $m,n\in \Z$.
It is easy to see that all elements in $\C[T(n),n\in\Z]u$ are singular vectors.

Moreover, we can check that $T(n)u\in\C u$ if $n\geq N$ and
$$T(n)u\equiv \frac{1}{2}h(n-N)h(N)u\mod \hM'(2N-n,1),\quad\text{if}\ n<N.$$
Hence we have $$\C[T(n),n\in\Z]u=\C[T(N-n), n\in\N]u.$$
\end{exa}

\begin{lem}\label{irre singular}
%Suppose $\varphi(h_\a\ot t^N)\neq0$ for any $\a\in\D_+$.
The $ \widehat{\sl}_2$-module $\hM$ is irreducible if and only if all its
singular vectors lie in $\C u$.
\end{lem}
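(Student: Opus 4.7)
The forward direction is immediate from Lemma \ref{singular submod}: if $\hM$ is irreducible and $w$ is a nonzero singular vector, then $U(\hg)w=\hM$, so $w\in\C u$.

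For the converse, the plan is to argue by contradiction. Assume all singular vectors lie in $\C u$ but $N'\subsetneq\hM$ is a nonzero proper submodule; I will produce a nonzero singular vector in $N'$, which by hypothesis must be a nonzero multiple of $u$, forcing $u\in N'$ and hence $N'=\hM$. A preliminary observation: the standing hypothesis forces $\vphi(h(N))\neq 0$, for otherwise $f(N)u$ is a singular vector outside $\C u$ by Example \ref{phi(hN)=0}.

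Take a nonzero weight vector $v\in N'$ of minimal true height $\Ht(v)$; such a $v$ exists since $\Ht\geq 0$ and $N'$ is $h(0)$-stable. The core computational step is a direct commutator analysis: commuting each of the annihilator operators past the PBW monomial $X(\ga)=e(\ga_+)h(\ga_0)f(\ga_-)$, I will verify that $e(n)X(\ga)u$ for $n\geq 1$, $f(n)X(\ga)u$ for $n>N$, and $(h(n)-\vphi(h(n)))X(\ga)u$ for $n\geq 1$ are linear combinations of $X(\ga')u$ with $\Ht(\ga')<\Ht(\ga)$, while $e(0)X(\ga)u$ has terms with $\Ht(\ga')\leq\Ht(\ga)$, the $\Ht$-preserving ones arising only from the three brackets $[e(0),h(m)]=-2e(m)$ for $m\leq -1$ (swapping $h(m)\leftrightarrow e(m)$), $[e(0),f(m)]=h(m)$ for $m\leq -1$ (swapping $f(m)\leftrightarrow h(m)$), and $[e(0),f(N)]=h(N)$ (deletion of an $f(N)$ with scalar $\vphi(h(N))$). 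This rests on the formula $\Ht(\ga)=\sum_{m\in\ga}(N-m)$ summed over entries: a bracket shifting a mode by $+n$ changes the letter's contribution by $-n$, while a bracket sending a letter into $\S_{-1,N}$ (then absorbed by $\vphi$) deletes the letter with net $\Ht$-change $-(N-m)\leq 0$, zero exactly when $m=N$.

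By minimality, $v$ satisfies $e(n)v=0$ for $n\geq 1$, $f(n)v=0$ for $n>N$, and $h(n)v=\vphi(h(n))v$ for $n\geq 1$; only $e(0)v$ may fail to vanish, but it still lies in $N'$ at the same minimal $\Ht$, and these pseudo-singular conditions propagate to $e(0)v$ via $[e(n),e(0)]=0$, $[f(n),e(0)]=-h(n)$ acting as $-\vphi(h(n))=0$ on $v$ for $n>N$, and $[h(n)-\vphi(h(n)),e(0)]=2e(n)$ vanishing on $v$ for $n\geq 1$. The final ingredient is local nilpotency of $e(0)$ on $\hM$, obtained by introducing the auxiliary complexity $C(\ga)=\#\{\text{entries of }\ga_0\}+2\cdot\#\{m\leq -1\text{ among entries of }\ga_-\}+\#\{m=N\text{ among entries of }\ga_-\}$, which strictly decreases by $1$ under each of the three $\Ht$-preserving $e(0)$-actions; the lex pair $(\Ht,C)$ is well-founded, so $e(0)^kv=0$ for some $k$. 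Choosing $k$ maximal with $e(0)^kv\neq 0$ produces a nonzero singular vector in $N'$, which by hypothesis lies in $\C u$, forcing $u\in N'$ and the desired contradiction.

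The main obstacle is the commutator bookkeeping isolating precisely the three $\Ht$-preserving $e(0)$-contributions and verifying that all other terms strictly drop $\Ht$; once this is in hand, both the propagation of the pseudo-singular conditions and the $(\Ht,C)$-induction for local nilpotency follow directly from the same explicit estimate.
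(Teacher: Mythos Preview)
Your proof is correct and follows the same overall architecture as the paper's: both establish the forward direction via Lemma \ref{singular submod}, then for the converse pick a nonzero element of minimal true height in a given nonzero submodule and use the $\Ht$-drop estimates to show it is annihilated by $e(i)$ ($i\geq 1$), $f(N+i)$ ($i\geq 1$), and $h(i)-\vphi(h(i))$ ($i\geq 1$).

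The one genuine difference is in the treatment of $e(0)$. The paper simply refines the choice of $w$: among elements of minimal $\Ht$ in the submodule, take one of \emph{maximal weight} (possible since only finitely many weights occur at a fixed true height). Then $e(0)w$ either has strictly smaller $\Ht$ or the same $\Ht$ and strictly larger weight; either alternative contradicts the choice of $w$, so $e(0)w=0$ and $w$ is singular in one stroke. Your argument instead proves that $e(0)$ is locally nilpotent via the auxiliary statistic $C$, then iterates and propagates the pseudo-singular conditions. This works --- your identification of the three $\Ht$-preserving $e(0)$-moves is accurate, and each does drop $C$ by exactly one, so the lex pair $(\Ht,C)$ is well-founded --- but it is more machinery than needed: since $e(0)$ raises $h(0)$-weight by $2$ while not increasing $\Ht$, and $|\ga_+|$ (hence the weight) is bounded at fixed $\Ht$, local nilpotency is immediate without introducing $C$. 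The paper's max-weight selection packages this observation in a single line. Your preliminary remark that the hypothesis forces $\vphi(h(N))\neq 0$ is correct but not actually used in either argument.
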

%{\color{red} The condition $\vphi(h(N))\neq$ is not used in this lemma!}

\def\M{\widehat{M}}

\begin{proof} The necessity follows from Lemma \ref{singular submod}.
We only need to proof the sufficiency. Suppose that all singular vectors of
$\M$ lie in $\C u$.

Let $W$ be a nonzero submodule of $\M$ and let $w\in W$ be a nonzero element with minimal $\Ht(w)$.
Since $\M$ is a weight module, so is $W$, and we may assume that $w$ is a weight vector with  maximal weight
among all elements in $W$ with minimal $\Ht(w)$.
This is possible since only finitely many weights may occur for elements with the same true height.
In particular, $h(0)w=\l w$ for some $\l\in\C$.
%By Lemma \ref{v to u}, we may assume additionally that $w=x_\ga u+w'$ for some $w'\in M'_{\ga}$.
%Then we have $x_\a w=0$ for all $\a\in\Phi_+$.

%We are now to show $w\in\C u$.
%Suppose on the contrary $w\notin\C u$, then $w$ is not a singular vector in $M$ by the assumption.
Write $w=\sum_{\ga\in\Ga}a_{\ga}X(\ga)u$, where only finitely many $a_\ga\in\C$, $\ga\in\Ga$ are nonzero.
Then we have
\begin{equation}\label{w_ij}\aligned
w_{1,i}=&e(i)w=\sum_{\ga\in\Ga}a_{\ga}[e(i),X(\ga)]u\in W,\ \forall\ i\in\N,\\
w_{2,i}=&f(N+i)w=\sum_{\ga\in\Ga}a_{\ga}[f(N+i),X(\ga)]u\in W,\ \forall\ i\in\N,\\
w_{3,i}=&(h(i)-\vphi(h(i)))w=\sum_{\ga\in\Ga}a_{\ga}[h(i),X(\ga)]u\in W,\ \forall\ i\in\N.\\
\endaligned\end{equation}

Let $x(j)$ be one of $e(i), f(N+i), h(i), i\in\N
$. Then we have
\begin{equation}\label{x(j)w}\aligned
&\hskip-.2cm[x(j),X(\ga)]u=\sum_{\htt(\ga')=\htt(\ga)+j,\ \ell(\ga')\leq\ell(\ga)}b_{\ga'}X(\ga')u\\
&\hskip2cm+\sum_{n=0}^N\sum_{\htt(\ga'')+n=\htt(\ga)+j,\ \ell(\ga'')\leq\ell(\ga)-1}c_{\ga'',n}X(\ga'')h(n)u,
\endaligned\end{equation}
where only finitely many $b_{\ga'}\in\C$ or $c_{\ga',n}\in\C$ are nonzero.

For an element in the first summand in \eqref{x(j)w}, we have
$$\Ht(\ga')=\ell(\ga')N-\htt(\ga')\leq\ell(\ga)N-\htt(\ga)-j<\Ht(\ga).$$
For an element in the second summand in \eqref{x(j)w}, we have
$$
\Ht(\ga'')=\ell(\ga'')N-\htt(\ga'')\leq(\ell(\ga)-1)N-\htt(\ga)-j+n<\Ht(\ga).
$$
For any $j\in\N$, we see that $\Ht(x(j)w)<\Ht(w)$ provided $\Ht(x(j)w)\neq0$.
By the choice of $w$, we conclude that all  elements in \eqref{w_ij} are zero.

Similarly, by taking $x(j)=e(0)$ in the above argument, we can deduce that $\Ht(e(0)w)<\Ht(w)$, or $\Ht(e(0)w)=\Ht(w)$ and the weight of $e(0)w$ is strictly larger than the weight of $w$. From the choice of $w$ we must have
  $e(0)w=0$.

Combining the above results, we see that $w$ is a singular vector and it lies in $\C u$ by the assumption.
Thus $W=\M$, which must be irreducible.
\end{proof}

By the proof of the above lemma, we have immediately

\begin{cor}\label{submod singular}
Any nonzero $\hg$-submodule of $\hM$ has a nonzero singular vector.
\end{cor}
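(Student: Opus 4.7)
The plan is to extract exactly what the proof of Lemma \ref{irre singular} already established about an arbitrary submodule, without invoking the hypothesis that all singular vectors lie in $\C u$. That hypothesis was only used at the very end of that proof (to conclude $w\in\C u$ and hence $W=\hM$); everything preceding it shows that the specially chosen element $w$ is itself a singular vector, and this is precisely what the corollary asserts.

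Concretely, I would start with an arbitrary nonzero submodule $W\subseteq\hM$. Since $\hM$ is a weight module for $h(0)=h\otimes 1$, the submodule $W$ is also weight. I would then pick a nonzero $w\in W$ with minimal true height $\Ht(w)$, and among elements of $W$ with this minimal true height I would pick one whose $h(0)$-weight is maximal — this is possible because only finitely many weights occur among vectors of any fixed true height.

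Next I would apply the commutator expansion \eqref{x(j)w} to each of $e(i)w$, $f(N+i)w$, and $(h(i)-\vphi(h(i)))w$ for $i\in\N$. Each application strictly decreases the true height (the estimates $\Ht(\ga')<\Ht(\ga)$ and $\Ht(\ga'')<\Ht(\ga)$ from the proof of Lemma \ref{irre singular} still apply verbatim, since they depend only on the structure constants of $\hg$ and on $j\geq 1$, not on $\vphi$). By minimality of $\Ht(w)$, each of these vectors must vanish. For $e(0)w$ the same commutator expansion shows either $\Ht(e(0)w)<\Ht(w)$, in which case it vanishes by minimality, or $\Ht(e(0)w)=\Ht(w)$ with strictly larger weight, in which case it vanishes by maximality of the weight of $w$. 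Hence $e(0)w=0$ as well, and combining with the previous relations, $w$ is a singular vector in $W$.

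There is no real obstacle here: the entire argument is already carried out inside the proof of Lemma \ref{irre singular}, and the only thing to do is to observe that the assumption on singular vectors is never used to deduce that $w$ itself is singular. The corollary therefore follows by simply isolating this portion of that proof.
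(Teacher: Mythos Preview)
Your proposal is correct and is exactly the approach the paper takes: the paper's own proof is just the one-line remark ``By the proof of the above lemma, we have immediately,'' and you have spelled out precisely which portion of that proof is being invoked. Nothing further is needed.
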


{\bf In the rest of this section, we always assume that $\vphi(h(N))\neq0$.}
Next we will study singular vectors in $\M$ which are not    multiples of $u$.
 Lt $w\in \M$ be a singular vector  which is not a   multiple of $u$.

%We will prove that any singular vector in $\M$ is a nonzero multiple of $u$.
%To the contrary, {\bf fix a singular vector} $w\in \M$ which is not a   multiple of $u$. We %will find some contradictions.

Clearly,  there exist $H\in\Z_+$ and $l\in\N$ such that
\begin{equation}\label{w} w=\sum_{\ga\in J(H,l)}a_\ga X(\ga)u\end{equation} with only finitely many $a_\ga\in\C$ nonzero, and at least one $a_\ga\neq0$ for some $\ga\in\bar J(H,l)$.

\begin{lem}\label{H,l}
If $a_\ga\neq0$ for  $\ga\in\bar J(H,l)$, then  $\gamma_+=\bo\in\Ga_{-1}$ and
$\gamma_-=\bo\in\Ga_N$.
In other words, $X(\gamma)=h(\gamma_0)$ for some $\gamma_0\in\Gamma_{-1}$.
\end{lem}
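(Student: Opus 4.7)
The plan is to extract the leading behavior of the singular-vector conditions $e(j)w=0$ for $j\geq 0$ and $f(N+j)w=0$ for $j\geq 1$ on the top part $w_0 := \sum_{\ga \in \bar{J}(H,l)} a_\ga X(\ga) u$ of $w$. By the true-height bound recorded in \eqref{x(j)w}, applying $e(j)$ (respectively $f(N+j)$) to any PBW monomial $X(\ga')u$ produces only terms of true height at most $\Ht(\ga')-j$ (respectively $\Ht(\ga')-N-j$), so modulo $\hM(H-j,l)$ (respectively $\hM(H-N-j,l)$) the left-hand sides are determined by $w_0$ alone. The singular equations thus descend to the graded piece spanned by $\{X(\ga')u:\ga'\in\bar{J}(H-j,l-1)\}$.

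The main calculation I would carry out is to establish the congruence
$$
e(j)X(\ga)u\;\equiv\;\vphi(h(N))\,\mu_{N-j}(\ga_-)\,X(\ga_+,\ga_0,\ga_-\setminus\{N-j\})u\pmod{\hM(H-j,l)},
$$
where $\mu_{N-j}(\ga_-)$ denotes the multiplicity of $N-j$ in $\ga_-$. Every length-preserving summand of $[e(j),X(\ga)]$ already lies in $\hM(H-j,l)$, so the only length-$(l-1)$ survivor at true height exactly $H-j$ comes from the commutator $[e(j),f(N-j)]=h(N)$, after which $h(N)$ is transported past the remaining factors and acts on $u$ as the scalar $\vphi(h(N))$. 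Because $\ga\mapsto(\ga_+,\ga_0,\ga_-\setminus\{N-j\})$ is a bijection from $\{\ga\in\bar{J}(H,l):N-j\in\ga_-\}$ onto $\bar{J}(H-j,l-1)$ and $\vphi(h(N))\neq 0$, the equation $e(j)w\equiv 0$ in the quotient forces $a_\ga=0$ whenever $N-j\in\ga_-$. Letting $j$ range over $\Z_+$, the integer $N-j$ sweeps through every possible entry of $\ga_-$, so $a_\ga=0$ for every $\ga$ with $\ga_-\neq\bo$. A mirror argument based on $[f(N+j),e(-j)]=-h(N)$ produces the companion congruence
$$
f(N+j)X(\ga)u\;\equiv\;-\vphi(h(N))\,\mu_{-j}(\ga_+)\,X(\ga_+\setminus\{-j\},\ga_0,\ga_-)u\pmod{\hM(H-N-j,l)},
$$
which by the same reasoning kills every $\ga$ with $\ga_+\neq\bo$. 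Combining, $a_\ga\neq 0$ forces $\ga_+=\ga_-=\bo$, so $X(\ga)=h(\ga_0)$ as claimed.

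The main obstacle is verifying the two displayed congruences, i.e.\ ruling out every other possible length-$(l-1)$ contribution at the relevant true height. The candidates that each must be shown to either drop true height by strictly more than $j$ or fall back into the length-$l$ stratum are: the $K$-term of $[e(j),f(-j)]=h(0)+jK$, the ``wrong-direction'' summands of $[e(j),h(\ga_0)]$ with $j+k_i\geq 0$ in which $e(j+k_i)$ must migrate through $h(\ga_0)f(\ga_-)$ to $u$, and the secondary terms generated when $h(N)$ is transported past $f(\ga_-)$ via $[h(N),f(m)]=-2f(N+m)$. A direct case analysis along the lines of the proof of Lemma \ref{irre singular} shows that each such term indeed loses at least $N$ additional units of true height, and completing this bookkeeping accurately is the technical core of the argument.
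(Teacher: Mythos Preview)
Your proposal is correct and follows essentially the same approach as the paper. Both arguments apply $e(j)$ (for $j\ge 0$) and $f(N+j)$ (for $j\ge 1$) to $w$ and isolate the unique surviving length-$(l-1)$ contribution at the top true height, which comes from the commutator $[e(j),f(N-j)]=h(N)$ (respectively $[f(N+j),e(-j)]=-h(N)$); the paper argues by contradiction for one fixed $\gamma$ and one target coefficient $X(\bar\gamma)u$, whereas you package the same computation as a congruence modulo $\hM(H-j,l)$ and read off all the coefficients at once, but the underlying height/length bookkeeping and the key identification of the $h(N)$ term are identical.
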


\begin{proof} Fix  $\ga\in\bar J(H,l)$ with $a_\ga\neq0$.
 Suppose on the contrary that $\ga_-\neq\bo$ or $\ga_+\neq\bo$.

First assume that $\ga_-=(m_1,\dots,m_k)\neq\bo$ for some $m_1,\dots, m_k\in\Z$, $k\in\N$ such that
$m_1\leq m_2\dots\leq m_k\leq N$.
Set $\bar\ga_-=(m_1,\dots,m_{k-1})\in\Ga_N$ and $\bar\ga=(\ga_+,\ga_0,\bar\ga_-)\in\Ga$.
It is clear that $\Ht(\bar\ga)=\Ht(\ga)-N+m_k$ and $\ell(\bar\ga)=\ell(\ga)-1$.

%If $m_k=N$, by computing $e(0)w$  we see that in the expression of $e(0)w$ (must be in the %expression of $e(0)X(\gamma)$)  there is a nonzero term with largest Ht: Ht$(e(0)w)=$Ht$(w)$ %and with smallest length: $\ell(e(0)w)=\ell(w)-1$, which contradicts the fact that  $e(0)w=0 %$. So $m_k<N$, i.e., $N-m_k\ge 1$.

Applying $e(N-m_k)$ to $w$, we have
\begin{equation}\label{e(N-m_k)}\aligned
e(N-m_k)w=&\sum_{\ga'\in J(H,l)}a_{\ga'}e(N-m_k)X(\ga')u\\
=&\sum_{\ga'\in J(H,l)}a_{\ga'}[e(N-m_k),X(\ga')]u.\\
\endaligned\end{equation}
Take any $\ga'=(\ga'_+,\ga'_0,\ga'_-)\in J(H,l)$. By the PBW Theorem, we can write
\begin{equation}\label{sum}\aligned
&[e(N-m_k),X(\ga')]\\
=&\sum_{\ga''\in I_1}b_{\ga''}X(\ga'')+\sum_{(\ga'',n)\in I_2}c_{\ga'',n}X(\ga'')\big(h(n)+\delta_{n,0}d_{\gamma''}K\big)+Y,
\endaligned\end{equation}
where $I_1$ consists of elements $\ga''\in\Ga$ with $\htt(\ga'')=\htt(\ga')+N-m_k$ and $\ell(\ga'')\leq \ell(\ga')$; $I_2$ consists of elements $(\ga'',n)\in\Ga\times\Z$ with
$0\leq n\leq N$, $\htt(\ga'')+n=\htt(\ga')+N-m_k$ and $\ell(\ga'')\leq \ell(\ga')-1$; and $Y\in U(\widehat{\sl}_2)(e\otimes\C[t])+U(\widehat{\sl}_2)((\C h\oplus\C f)\otimes t^{N+1})$. Note that only finitely many $b_{\ga''}\in\C, \ga''\in I_1$, or $c_{\ga'',n}\in\C, (\ga'',n)\in I_2$, are nonzero.
Noticing that $Yu=0$, we see that the summand $Y$ does not contribute to the element $e(N-m_k)w$ and we will
ignore it in the following discussion.

Consider the first summand in \eqref{sum} and take $\ga''\in I_1$.
We set $p=\ell(\ga')-\ell(\ga'')\in\Z_+$. Then we have
$$\aligned
\Ht(\ga'')=&\ell(\ga'')N-\htt(\ga'')=(\ell(\ga')-p)N-\htt(\ga')-N+m_k\\
=&\Ht(\ga')-N+m_k-pN\\
\leq&\Ht(\ga)-N+m_k-pN\leq\Ht(\bar\ga).\\
\endaligned$$
Thus $\Ht(\ga'')=\Ht(\bar\ga)$  if and only if $\Ht(\ga')=\Ht(\ga)$ and $p=0$.
We see that $\ga''=\bar\ga$ implies $\Ht(\ga')=\Ht(\ga)$ and $\ell(\ga')=\ell(\ga'')=\ell(\bar\ga)=\ell(\ga)-1$ and hence $\ga'\notin J(H,l)$. So $\ga''=\bar\ga$ never happens for any $\ga\in\bar J(H,l)$ with $a_\ga\neq0$ in this case.

Next we consider the second summand in \eqref{sum} and take $(\ga'',n)\in I_2$.
We set $p=\ell(\ga')-\ell(\ga'')\in\N$. Then
$$\aligned
\Ht(\ga'')=&\ell(\ga'')N-\htt(\ga'')=(\ell(\ga')-p)N-\htt(\ga')-N+m_k+n\\
=&\Ht(\ga')-N+m_k-pN+n\\
\leq&\Ht(\ga)-N+m_k-pN+N=\Ht(\bar\ga)-(p-1)N\leq\Ht(\bar\ga).\\
\endaligned$$
Hence $\Ht(\ga'')=\Ht(\bar\ga)$ if and only if $\Ht(\ga')=\Ht(\ga), n=N$ and $p=1$.
We see that $\ga''=\bar\ga$ implies $\ell(\ga')=\ell(\ga'')+1=\ell(\bar\ga)+1=\ell(\ga)$
and  $\ga'=\ga$. Indeed, suppose $\ga'=(\ga'_+,\ga'_0,\ga'_-)$ with
$\ga'_-=(m'_1,\dots,m'_{k'})\neq\bo$. Then $[e(N-m_k),X(\ga')]$ gives rise to $X(\bar\ga)h(N)$
(i.e., $c_{\bar\ga,N}\neq0$ in the second summand of \eqref{sum}) only if
$m'_i=m_k$ for some $1\leq i\leq k'$, $(m'_1,\dots,\widehat{m'_i},\dots,m'_{k'})=\bar\ga=(m_1,\dots,m_{k-1})$
and $\ga'_0=\ga_0$, $\ga'_+=\ga_+$. This implies $k=k'$ and $\ga'=\ga$ by the assumptions
$m_1\leq\dots\leq m_k$ and $m'_1\leq\dots\leq m'_k$.
In other words, $c_{\bar\ga,n}\neq0$ in the second summand of \eqref{sum} only if $\ga'=\ga$ and $n=N$.

From the above arguments we can deduce that
$$\aligned
&e(N-m_k)w\equiv a_\ga[e(N-m_k),X(\ga)]u\\
\equiv&\sum_{m_i=m_k} e(\ga_+)h(\ga_0)f(m_1)\dots f(m_{i-1})[e(N-m_k),f(m_i)]f(m_{i+1})\dots f(m_k)u,\\
\equiv&\sum_{m_i=m_k}e(\ga_+)h(\ga_0)f(m_1)\dots \widehat{f(m_i)}\dots f(m_k)h(N)u,\\
\equiv&|\{1\leq i\leq k\ |\ m_i=m_k\}|\varphi(h(N))X(\bar\ga)u\mod \sum_{\ga''\in\Ga\setminus\{\bar\ga\}}X(\ga'')u.\\
\endaligned$$
In particular, $e(N-m_k)w$ is nonzero, contradicting to the fact that $w$ is a singular vector.

If $\ga_+\neq\bo$, we can also deduce a contradiction by the similar arguments,
only applying $f(N-m_k)$ to $w$ if $\ga_+=(m_1,\dots,m_k)$ for some $m_i\in\Z, m_i<0$ for
$i=1,\dots,k\in\N$. The lemma follows.
\end{proof}

\begin{lem}\label{H,l+1}
Suppose $a_\ga\neq0$ for some $\ga\in\bar J(H,l)$ with $\ga_+=\bo\in\Ga_{-1}$, $\ga_-=\bo\in\Ga_N$
and $\ga_0=(m_1,\dots,m_l)\in\Ga_{-1}$. Fix any $1\leq k\leq l$ and set $\bar\ga_0=(m_1,\dots,\widehat{m_k}\dots,m_{l})$.
For any integer $1\leq i\leq -m_k$, define $\ga^{(i)}\in\Ga$ by
$$\ga^{(i)}_+=(-i)\in\Ga_{-1},\quad \ga^{(i)}_0=\bar\ga_0\in\Ga_{-1},\quad\ga^{(i)}_-=(N+m_k+i)\in\Ga_N. $$
Then we have $a_{\ga^{(i)}}=\frac{2a_\ga}{\varphi(h(N))}|\{1\leq j\leq l\ |\ m_j=m_k\}|$ for all $1\leq i\leq -m_k$. In particular, all $a_{\ga^{(i)}}, 1\leq i\leq -m_k$, are equal.
\end{lem}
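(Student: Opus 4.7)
The plan is to apply the singular-vector condition $e(r)w = 0$ with the specific choice $r = -m_k - i$, which is a non-negative integer since $1 \leq i \leq -m_k$, then to expand $e(r)w$ in the PBW basis and extract the coefficient of a carefully chosen basis element. The natural target is $X(\gamma'')u := e(-i)h(\bar\gamma_0)u$, where $\gamma'' := ((-i), \bar\gamma_0, \bo) \in \Gamma$; this element has $\ell(\gamma'') = l$ and $\Ht(\gamma'') = H + m_k + i \leq H$, and it is precisely the PBW element that naturally connects $h(\gamma_0)$ (through the single bracket $[e(r), h(m_k)]$) to $X(\gamma^{(i)})$ (through the single bracket $[e(r), f(N+m_k+i)]$).

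I would then compute the contributions to the coefficient of $X(\gamma'')u$ from the two distinguished summands of $w$. From $\gamma$, with $X(\gamma) = h(m_1)\cdots h(m_l)$, the identity $[e(r), h(m_j)] = -2e(r+m_j)$ yields the factor $e(-i)$ exactly when $m_j = m_k$; after PBW reordering the only length-$l$ contribution to $X(\gamma'')$ amounts to
\begin{equation*}
-2\, a_\gamma\, |\{j : m_j = m_k\}|,
\end{equation*}
since the reordering commutators produce either strictly longer terms or terms of strictly lower true height. From $\gamma^{(i)}$, with $X(\gamma^{(i)}) = e(-i) h(\bar\gamma_0) f(N+m_k+i)$, the operators $e(r)$ and $e(-i)$ commute, and the critical bracket
\begin{equation*}
[e(r), f(N+m_k+i)] = h(N) + r\delta_{N,0}K = h(N)
\end{equation*}
(using $N \geq 1$) yields $a_{\gamma^{(i)}} e(-i) h(\bar\gamma_0) h(N) u = a_{\gamma^{(i)}} \varphi(h(N)) X(\gamma'')u$, while the remaining commutator $[e(r), h(\bar\gamma_0)]$ produces only length-$(l+1)$ PBW terms, which cannot match the shape of $X(\gamma'')$.

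The remaining task is to show that no other $\gamma' \in J(H,l)$ with $a_{\gamma'} \neq 0$ contributes to the coefficient of $X(\gamma'')u$. If $\Ht(\gamma') < H$, the height estimate $\Ht(e(r)X(\gamma')u) \leq \Ht(\gamma') - r < \Ht(\gamma'')$ rules this out immediately. If $\Ht(\gamma') = H$ with $\ell(\gamma') = l$, Lemma \ref{H,l} forces $\gamma' = h(\gamma'_0)$ for some $\gamma'_0 \in \Gamma_{-1}$, and repeating the analysis above requires $\gamma'_0 = \gamma_0$, i.e.\ $\gamma' = \gamma$. The main obstacle will be the case $\Ht(\gamma') = H$ with $\ell(\gamma') = l+1$: here the central $K$-term appearing in $[e(r), f(m_k+i)] = h(0) + rK$, or in commutators $[h(a), h(b)] = 2a\delta_{a+b,0}K$ generated during PBW reordering, can in principle reduce the length from $l+1$ to $l$ and so produce a length-$l$ term. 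I expect a careful combinatorial/PBW analysis, exploiting the very rigid shape $e(-i)h(\bar\gamma_0)$ together with the constraints $\Ht(\gamma') = H$ and $\ell(\gamma') = l+1$, to show that no such $\gamma'$ actually produces $X(\gamma'')$. Once these contributions are ruled out, the equation $e(r)w = 0$ gives
\begin{equation*}
a_{\gamma^{(i)}}\varphi(h(N)) = 2 a_\gamma |\{j : m_j = m_k\}|,
\end{equation*}
and since $\varphi(h(N)) \neq 0$ and the right-hand side is independent of $i$, this yields the asserted formula uniformly for all $1 \leq i \leq -m_k$.
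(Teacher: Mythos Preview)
Your approach is correct and is the natural ``dual'' of the paper's proof: the paper applies $f(N+i)$ and tracks the coefficient of the basis element $X(\tilde\gamma)u$ with $\tilde\gamma=(\bo,\bar\gamma_0,(N+i+m_k))$, whereas you apply $e(-m_k-i)$ and track $X(\gamma'')u$ with $\gamma''=((-i),\bar\gamma_0,\bo)$. Under the obvious $e\leftrightarrow f$ symmetry these two computations are mirror images of one another, and the height/length bookkeeping is identical; in both cases the key point is that only $\gamma'\in\{\gamma,\gamma^{(i)}\}$ can feed the target coefficient, and the resulting linear relation is the same.

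Two small remarks on the part you flagged as the ``main obstacle''. First, the $K$-term in $[e(r),f(m_k+i)]=h(0)+rK$ is not actually a danger: it would require $\gamma'_-=(m_k+i)$, $\gamma'_+=(-i)$, $\gamma'_0=\bar\gamma_0$, and such a $\gamma'$ has $\Ht(\gamma')=H+N\neq H$, so it never appears in $w$. Second, the Heisenberg commutators $[h(a),h(b)]=2a\delta_{a+b,0}K$ drop the length by $2$, not $1$, so they take length $l+1$ to $l-1$ and cannot hit $\gamma''$ either. With these observations the ``careful combinatorial/PBW analysis'' you anticipate reduces to exactly the same one-line identification the paper makes (only $\gamma'=\gamma^{(i)}$ yields an $h(N)$ factor with the right residual shape), and your argument goes through.
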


\begin{proof} Fix any integer $1\leq i\leq -m_k$.
Applying $f(N+i)$ to $w$, we have
$$\aligned
f(N+i)w=&\sum_{\ga'\in J(H,l)}a_{\ga'}f(N+i)X(\ga')u\\
=&\sum_{\ga'\in J(H,l)}a_{\ga'}[f(N+i),X(\ga')]u\\
\endaligned$$
Set $\tilde\ga\in\Ga$ with $\tilde\ga_+=\bo\in\Ga_{-1}, \tilde\ga_0=\bar\ga_0, \tilde\ga_-=(N+i+m_k)\in\Ga_N$. Note that $\Ht(\tilde\ga)=\Ht(\bar\ga_0)-m_k-i=\Ht(\ga)-N-i, \ell(\tilde\ga)=\ell(\ga)=l$.

Take any $\ga'\in J(H,l)$. By the PBW Theorem, we can write
\begin{equation}\label{sum'}\aligned
&[f(N+i),X(\ga')]\\
=&\sum_{\ga''\in I_1}b_{\ga''}X(\ga'')+\sum_{(\ga'',n)\in I_2}c_{\ga'',n}X(\ga'')\big(h(n)+\delta_{n,0}d_{\gamma''}K\big)+Y,
\endaligned\end{equation}
where $I_1$ consists of $\ga''\in\Ga$ with $\htt(\ga'')=\htt(\ga')+N+i$ and $\ell(\ga'')\leq \ell(\ga')$; $I_2$ consists of $(\ga'',n)\in\Ga\times\Z$ with
$0\leq n\leq N$, $\htt(\ga'')+n=\htt(\ga')+N+i$ and $\ell(\ga'')\leq \ell(\ga')-1$; $Y\in U(\widehat{\sl}_2)(e\otimes\C[t])+U(\widehat{\sl}_2)((\C h\oplus\C f)\otimes t^{N+1})$ and only finitely many $b_{\ga''}\in\C, \ga''\in I_1$, or $c_{\ga'',n}\in\C, (\ga'',n)\in I_2$, are nonzero. Note that $Yu=0$.

Consider the first summand in \eqref{sum'} and take $\ga''\in I_1$.
Set $p=\ell(\ga')-\ell(\ga'')\in\Z_+$. We have
$$\aligned
\Ht(\ga'')=&\ell(\ga'')N-\htt(\ga'')=(\ell(\ga')-p)N-\htt(\ga')-N-i\\
=&\Ht(\ga')-N-i-pN\\
\leq&\Ht(\ga)-N-i-pN=\Ht(\tilde\ga)-pN\leq\Ht(\tilde\ga).\\
\endaligned$$
Then $\Ht(\ga'')=\Ht(\tilde\ga)$ if and only if $\Ht(\ga')=\Ht(\ga)$ and $p=0$.
Suppose $\ga''=\tilde\ga$. We get $\Ht(\ga')=\Ht(\ga)$ and $\ell(\ga')=\ell(\ga'')=\ell(\tilde\ga)=\ell(\ga)$ and hence $\ga'\in \bar J(H,l)$. By Lemma \ref{H,l}, we see that $\ga'_+=\bo\in\Ga_{-1}$, $\ga'_-=\bo\in\Ga_{N}$ and $\ga'_0=(m'_1,\dots,m'_{k'})\in\Ga_{-1}$.
Then %$b_{\ga''}\neq0$ in the first summand of \eqref{sum'}
$[f(N+i),X(\ga')]$ gives rise to $\ga''=\tilde\ga$
only if $m'_j=m_k$ for some $1\leq j\leq k'$
and $(m'_1,\dots,\widehat{m'_{j}},\dots,m'_{k'})=(m_1,\dots,\widehat{m_k},\dots,m_{l})=\bar\ga_0$, forcing $k'=l$ and $\ga'=\ga$. So $b_{\tilde\ga}\neq0$ in the first summand of \eqref{sum'} only if $\ga'=\ga$.

Now we consider the second summand in \eqref{sum'} and take $(\ga'',n)\in I_2$.
Again set $p=\ell(\ga')-\ell(\ga'')\in\N$. We have
$$\aligned
\Ht(\ga'')=&\ell(\ga'')N-\htt(\ga'')=(\ell(\ga')-p)N-\htt(\ga')-N-i+n\\
=&\Ht(\ga')-i-(p+1)N+n\\
\leq&\Ht(\ga)-i-pN=\Ht(\tilde\ga)-(p-1)N\leq\Ht(\tilde\ga).\\
\endaligned$$
Then $\Ht(\ga'')=\Ht(\tilde\ga)$ if and only if $\Ht(\ga')=\Ht(\ga), n=N$ and $p=1$.
Suppose $\ga''=\tilde\ga$. Then $\ell(\ga')=\ell(\ga'')+1=\ell(\tilde\ga)+1=\ell(\ga)+1$.
Write $\ga'=(\ga'_+,\ga'_0,\ga'_-)$. Hence $[f(N+i),X(\ga')]$ gives rise to $X(\tilde\ga)h(N)$ if and only if
$\ga'_+=(-i)\in\Ga_{-1}$, $\ga'_0=\bar\ga_0\in\Ga_{-1}$ and $\ga'_-=(N+i+m_k)\in\Ga_N$.
That is, $c_{\tilde\ga,n}\neq0$ in the second summand of \eqref{sum'} only if $\ga'=\ga^{(i)}$ and $n=N$.

From the above arguments we  deduce that
$$\aligned
&f(N+i)w\equiv a_\ga[f(N+i),X(\ga)]u+a_{\ga^{(i)}}[f(N+i),X(\ga^{(i)})]u\\
\equiv&a_\ga\sum_{m_j=m_k}h(m_1)\cdots h(m_{j-1})[f(N+i),h(m_j)]h(m_{j+1})\cdots h(m_l)u\\
      &+a_{\ga^{(i)}}[f(N+i),e(-i)]h(m_1)\cdots\widehat{h(m_k)}\cdots h(m_{l})f(N+i+m_k)u\\
\equiv&2a_\ga\sum_{m_j=m_k}h(m_1)\cdots \widehat{h(m_{j})}\cdots h(m_l)f(N+i+m_k)u\\
      &-a_{\ga^{(i)}}h(N)h(m_1)\cdots\widehat{h(m_k)}\cdots h(m_{l})f(N+i+m_k)u\\
\equiv&2a_\ga|\{1\leq j\leq l\ |\ m_j=m_k\}|h(m_1)\cdots\widehat{m_k}\cdots h(m_{l})f(N+i+m_k)u\\
      &-a_{\ga^{(i)}}h(m_1)\cdots\widehat{h(m_k)}\cdots h(m_{l})f(N+i+m_k)h(N)u\\
\equiv&\Big(2a_\ga|\{1\leq j\leq l\ |\ m_j=m_k\}|-a_{\ga^{(i)}}\varphi(h(N))\Big)\\
&\hskip0.1cm\cdot h(m_1)\cdots\widehat{h(m_k)}\cdots h(m_{l})f(N+i+m_k)u\mod\sum_{\ga''\in\Ga\setminus\{\tilde\ga\}}X(\ga'')u.\\
\endaligned$$
Since $w$ is a singular vector, we have $f(N+i)w=0$. The lemma now follows.
\end{proof}

\subsection{Irreducibility in the noncritical case }

Now we consider the case $\vphi(K)\ne -2$. We use notations and assumptions as before, in particular, $\vphi(h(N))\neq0$.

\begin{lem}\label{w in C}
If $\vphi(K)\neq-2$, then any singular vector  in $\hM$ lies in $\C u$.
%\begin{itemize}
%\item[(1)] If $\vphi(K)\neq-2$, then the singular vector $w$ lies in $\C u$.
%\item[(2)] If $\vphi(K)=-2$, then the singular vector $w$ lies in $\C[\o_1,\dots,\o_N]u$.
%\end{itemize}
\end{lem}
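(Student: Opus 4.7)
The plan is to argue by contradiction. Suppose $w\in \hM$ is a nonzero singular vector with $w\notin\C u$, and write $w=\sum_{\ga\in J(H,l)}a_\ga X(\ga)u$ with $(H,l)$ chosen as in~\eqref{w}, so that some $a_\ga\ne 0$ for $\ga\in\bar J(H,l)$. By Lemma~\ref{H,l} each such top-level $\ga$ has $X(\ga)=h(\ga_0)$ for $\ga_0=(m_1,\dots,m_l)\in\Gamma_{-1}$, and by Lemma~\ref{H,l+1} the associated length-$(l+1)$ elements $\ga^{(i)}\in\bar J(H,l+1)$ already have their coefficients pinned down to $a_{\ga^{(i)}}=\frac{2a_\ga|\{j:m_j=m_k\}|}{\vphi(h(N))}$, which are nonzero whenever $a_\ga\ne 0$.

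The decisive new step is to produce a further relation that involves $\vphi(K)$ explicitly, by applying an annihilator designed so that a $K$-producing commutator appears. I would apply $e(p)\cdot w=0$ with $p=-N-m_k-i\ge 0$, which is valid when $m_k\le -(N+1)$ and $1\le i\le -N-m_k$. The crucial ingredient is then the commutator $[e(p),f(N+m_k+i)]=h(0)+p\,K$, which produces a central contribution with explicit $\vphi(K)$-dependence. Examining the coefficient of the PBW basis vector $e(-i)h(\bar\ga_0)u$ in $e(p)w=0$ along the lines of Lemmas~\ref{H,l} and~\ref{H,l+1}: the top-level $\ga$'s give no contribution (the matching condition would force $N=0$), while the $\ga^{(i)}$-term contributes $(\vphi(h(0))+p\vphi(K))\cdot a_{\ga^{(i)}}$ up to corrections from the auxiliary commutators $[e(p),h(m'_s)]=-2e(p+m'_s)$ introduced when $e(p)$ is moved past $h(\bar\ga_0)$. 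After careful bookkeeping, the net relation contains $\vphi(K)+2$ as a factor; since $\vphi(K)+2\ne 0$, one deduces $a_\ga=0$, contradicting the choice of $\ga$.

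The main obstacle is twofold. First, the chosen coefficient also receives contributions from the lower-true-height part of $w$ and from elements of $\bar J(H,l+1)$ beyond the explicit $\ga^{(i)}$'s, so a clean induction on $H$ is needed in order to absorb these via the inductive hypothesis applied to each lower-height piece. Second, the required operator $e(-N-m_k-i)$ is only available when some $m_k\le -(N+1)$; if every $m_k$ lies in $\{-N,\dots,-1\}$ the argument as stated does not apply. In this case the remedy is to iterate the Lemma~\ref{H,l+1} construction: by applying $f(N+i)w=0$ and $e(i)w=0$ to the elements of $\bar J(H,l+1)$ already at hand, one generates further constraints on coefficients in $\bar J(H,l+k)$ for $k\ge 2$, descending into the PBW filtration until a configuration with a mode deep enough to enable the $K$-producing commutator is reached and the main argument can be completed.
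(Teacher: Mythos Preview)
Your overall strategy---to isolate a coefficient that must vanish and exhibit $\vphi(K)+2$ as a factor---is the right one, but the specific operator you choose creates a genuine gap that your proposed workaround does not close.

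The paper applies $(h(-m_k)-\vphi(h(-m_k)))$ to $w$, not $e(p)$. This is always an annihilator, since $-m_k\ge 1$. The $K$-dependence appears directly from the top-level $\ga$ via $[h(-m_k),h(m_j)]=-2m_k\delta_{m_j,m_k}K$, contributing $-2m_k\vphi(K)|\{j:m_j=m_k\}|\,a_\ga X(\bar\ga)u$. Each $\ga^{(i)}$ contributes through $[h(-m_k),e(-i)]=2e(-m_k-i)$ followed by $[e(-m_k-i),f(N+m_k+i)]=h(N)$, giving $2\vphi(h(N))a_{\ga^{(i)}}X(\bar\ga)u$; summing over $i=1,\dots,-m_k$ and inserting the value of $a_{\ga^{(i)}}$ from Lemma~\ref{H,l+1} yields $-4m_k|\{j:m_j=m_k\}|\,a_\ga X(\bar\ga)u$. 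The total coefficient of $X(\bar\ga)u$ is $-2m_k(\vphi(K)+2)|\{j:m_j=m_k\}|\,a_\ga$, and one is done with no case distinction.

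Your choice $e(p)$ with $p=-N-m_k-i\ge 0$ requires $m_k\le -(N+1)$. In the complementary case $m_k\in\{-N,\dots,-1\}$ the iteration you sketch cannot reach a ``deep enough'' mode: Lemma~\ref{H,l+1} produces $\ga^{(i)}$ with $e$-mode $-i\in\{m_k,\dots,-1\}$ and $f$-mode $N+m_k+i\in\{N+m_k+1,\dots,N\}$, and any further descent by the same mechanism stays within these ranges. Concretely, take $N=1$ and $\ga_0=(-1)$; then $H=2$, $\ga^{(1)}=((-1),\bo,(1))$, and there is no $p\ge 0$ for which $[e(p),f(1)]$ involves $K$. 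So the ``descend until deep enough'' step never terminates, and the lower-true-height contributions you mention cannot be controlled by an induction on $H$ alone. The remedy is exactly the paper's operator $h(-m_k)$.
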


\begin{proof}
 To the contrary, suppose that $\hM_{-1,N}(\vphi)$ has  a singular vector   $w\notin\C u$ as in (\ref{w}). Then there exist some $H,l\in\N$ such that
$w\in \M(H,l)$ and $a_\ga\neq0$ for some $\ga\in\bar J(H,l)$.
By Lemma \ref{H,l}, we have $\ga=(\ga_+,\ga_0,\ga_-)$ with $\ga_+=\bo\in\Ga_{-1}$, $\ga_-=\bo\in\Ga_N$ and
$\ga_0=(m_1,\dots,m_l)\in\Ga_{-1}$,
where $m_1,\dots,m_l\in\Z, l\in\N$ and $m_1\leq\dots\leq m_l\leq -1$.
Note that $\ell(\ga)=l\geq 1$. Fix any $1\leq k\leq l$.

Set $\ga^{(i)}=(\ga^{(i)}_+,\ga^{(i)}_0,\ga^{(i)}_-)$ for any integer $1\leq i\leq -m_k$, where
$$\aligned
&\ga^{(i)}_+=(-i)\in\Ga_{-1},\quad \ga^{(i)}_-=(N+m_k+i)\in\Ga_{N},\\
&\ga^{(i)}_0=\bar\ga_0=(m_1,\dots,\widehat{m_k},\dots,m_l)\in\Ga_{-1}.\
\endaligned$$
Then we have $a_{\ga^{(i)}}=\frac{2a_\ga}{\varphi(h(N))}|\{1\leq j\leq l\ |\ m_j=m_k\}|\neq0$ for all $1\leq i\leq -m_k$ by Lemma \ref{H,l+1}.
Set $\bar\ga=(\bar\ga_+,\bar\ga_0,\bar\ga_-)$, where $\bar\ga_+=\bo\in\Ga_{-1}$ and $\bar\ga_-=\bo\in\Ga_N$.
It is clear that $\Ht(\bar\ga)=\Ht(\ga)-N+m_k$ and $\ell(\bar\ga)=\ell(\ga)-1$.

Applying $h(-m_k)-\varphi(h(-m_k))$ to $w$ we get
$$\aligned
(h(-m_k)-\varphi(h(-m_k)))w=&\sum_{\ga'\in J(H,l)}a_{\ga'}(h(-m_k)-\varphi(h(-m_k)))X(\ga')u\\
=&\sum_{\ga'\in J(H,l)}a_{\ga'}[h(-m_k),X(\ga')]u.\\
\endaligned$$
Take any $\ga'=(\ga'_+,\ga'_0,\ga'_-)\in J(H,l)$.
If $\ga'\in \bar J(H,l)$, then $\ga'_+=\bo\in\Ga_{-1}$ and $\ga'_-=\bo\in\Ga_N$ by Lemma \ref{H,l}.
Hence $[h(-m_k),X(\ga')]$ give rise to $X(\bar\ga)$ if and only if $\ga'=\ga$. %and $m_k=-N$.
In this case, we have
\begin{equation}\label{vphi(K)}
[h(-m_k),X(\ga)]u=-2m_k\vphi(K)|\{1\leq j\leq l\ |\ m_j=m_k\}|X(\bar\ga).
\end{equation}

 In the rest of the proof we assume that $\ga'\in J'(H,l)$, that is, either $\Ht(\ga')<H$ or $\Ht(\ga')=H$ and $\ell(\ga')>l$.
Using the PBW Theorem we can write
\begin{equation}\label{sum''}\aligned
&[h(-m_k),X(\ga')]\\
=&\sum_{\ga''\in I_1}b_{\ga''}X(\ga'')+\sum_{(\ga'',n)\in I_2}c_{\ga'',n}X(\ga'')h(n)+Y,
\endaligned\end{equation}
where $I_1$ consists of $\ga''\in\Ga$ with $\htt(\ga'')=\htt(\ga')-m_k$ and $\ell(\ga'')\leq \ell(\ga')$; $I_2$ consists of $(\ga'',n)\in\Ga\times\Z$ with
$0\leq n\leq N$, $\htt(\ga'')+n=\htt(\ga')-m_k$ and $\ell(\ga'')\leq \ell(\ga')-2$; $Y\in U(\widehat{\sl}_2)(e\otimes\C[t])+U(\widehat{\sl}_2)((\C h\oplus\C f)\otimes t^{N+1})$ and only finitely many $b_{\ga''}\in\C, \ga''\in I_1$, or $c_{\ga'',n}\in\C, (\ga'',n)\in I_2$, are nonzero. Note again that $Yu=0$.

Consider the first summand in \eqref{sum''} and take $\ga''\in I_1$.
Again we set $p=\ell(\ga')-\ell(\ga'')\in\Z_+$ and obtain
$$\aligned
\Ht(\ga'')=&\ell(\ga'')N-\htt(\ga'')=(\ell(\ga')-p)N-\htt(\ga')+m_k\\
=&\Ht(\ga')+m_k-pN\\
\leq&\Ht(\ga)+m_k-pN=\Ht(\bar\ga)-(p-1)N.\\
\endaligned$$
Suppose first that $p\geq1$. Then we have $\Ht(\ga'')\leq\Ht(\bar\ga)$ and the equality holds if and only if $\Ht(\ga')=\Ht(\ga)$ and $p=1$.
We see that $\ga''=\bar\ga$ implies $\Ht(\ga')=\Ht(\ga)$, $\ell(\ga')=\ell(\ga'')+1=\ell(\bar\ga)+1=\ell(\ga)$ and hence $\ga'\in\bar J(H,l)$.
This contradicts the assumption that  $\ga'\in J'(H,l)$.
%By Lemma \ref{H,l}, we see $\ga'_+=\bo\in\Ga_{-1}$ and $\ga'_-=\bo\in\Ga_{N}$.
%Thus $[h(-m_k),X(\ga')]=0$ and this could not happen.

Now suppose that $p=0$ and $\ga''=\bar\ga$.  We have   $\ell(\ga')=\ell(\ga'')=\ell(\bar\ga)$. One can  easily see that
$[h(-m_k),X(\ga')]$ can not give rise to $X(\ga'')=X(\bar\ga)=h(\bar\ga_0)$.

Consider the second summand in \eqref{sum''} and take $(\ga'',n)\in I_2$.
Set $p=\ell(\ga')-\ell(\ga'')\ge2$. Then
$$\aligned
\Ht(\ga'')=&\ell(\ga'')N-\htt(\ga'')=(\ell(\ga')-p)N-\htt(\ga')+m_k+n\\
=&\Ht(\ga')+m_k-pN+n\\
\leq&\Ht(\ga)+m_k-pN+N=\Ht(\bar\ga)-(p-2)N.\\
\endaligned$$\begin{flushleft}
	
\end{flushleft}
 We have $\Ht(\ga'')\leq\Ht(\bar\ga)$ and the equality holds if and only if $\Ht(\ga')=\Ht(\ga), n=N$ and $p=2$. Thus, the equality $\ga''=\bar\ga$ implies $\ell(\ga')=\ell(\ga'')+2=\ell(\bar\ga)+2=\ell(\ga)+1$. We will show that
 $\ga'=\ga^{(i)}$ for some $1\leq i\leq -m_k$.
Indeed, if $\ga'=(\ga'_+,\ga'_0,\ga'_-)$ then $[h(-m_k),X(\ga')]$ gives rise to $X(\bar\ga)h(N)=h(\bar\ga_0)h(N)$
if and only if $\ga'_+=(i)\in\Ga_{-1}, \ga'_-=(j)\in\Ga_N$ and $\ga'_0=\bar\ga_0$ for some $i,j\in\Z$ with $i\leq-1, j\leq N$ such that $[[h(-m_k),e(i)],f(j)]\in\C h(N)$. This forces $i+j-m_k=N$, or $j=N+m_k-i$,
and hence $\ga'=\ga^{(-i)}$ with $m_k\leq i\leq -1$.

%Then suppose $p=1$ and $\ga''=\bar\ga$. Again, $[h(-m_k),X(\ga')]$ could not give rise to
%$X(\ga'')h(N)=X(\bar\ga)h(N)=h(\bar\ga_0)h(N)$ with $\ell(\ga')=\ell(\bar\ga)+1$.

From the above arguments we  deduce that
$$\aligned
&(h(-m_k)-\vphi(h(-m_k)))w\\
\equiv &a_\ga [h(-m_k),X(\ga)]u+\sum_{i=1}^{-m_k}a_{\ga^{(i)}}[h(-m_k),X(\ga^{(i)})]u,\\
\endaligned$$
and
$$\aligned
      &[h(-m_k),e(-i)h(\bar\ga_0)f(N+m_k+i)]u\\
\equiv&[h(-m_k),e(-i)]h(\bar\ga_0)f(N+m_k+i)u\\
\equiv&2e(-m_k-i)h(\bar\ga_0)f(N+m_k+i)u\\
\equiv&2h(\bar\ga_0)[e(-m_k-i),f(N+m_k+i)]u\\
\equiv&2\varphi(h(N))X(\bar\ga)u\ \mod\sum_{\ga''\in\Ga\setminus\{\bar\ga\}}\alpha_{\gamma''}X(\ga'')u.\\
%\equiv&-4m_ka_\ga|\{1\leq j\leq k\ |\ m_j=m_k\}|X(\bar\ga)u\
\endaligned$$
%Recall that $a_{\ga^{(i)}}=2a_\ga|\{1\leq j\leq l\ |\ m_j=m_k\}|/\vphi(h(N))$.
Combining the above formula with \eqref{vphi(K)}, we obtain that the coefficient of $X(\bar\ga)$ in $(h(-m_k)-\vphi(h(-m_k)))w$ equals
$$-2a_\ga m_k(\vphi(K)+2)|\{1\leq j\leq l\ |\ m_j=m_k\}|,$$
%$$-2a_\ga m_k(\vphi(K)\delta_{m_k,-N}+2)|\{1\leq j\leq k\ |\ m_j=m_k\}|,$$
which is nonzero by our assumptions, contradicting to the fact that $w$ is a singular vector. So, we must have $w\in\C u$, as required.
%
%(2) In this case, we have $m_l>N$. Apply $e(i)$
%
%
\end{proof}

\begin{thm}\label{irre}
The $ \widehat{\sl}_2$-module $\hM(\vphi)$ is irreducible if and only if \\$\vphi(h(N))\neq0$
and $\vphi(K)\neq-2$.
\end{thm}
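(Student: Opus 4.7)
The plan is to assemble the theorem directly from the preceding lemmas and examples, which already contain all of the structural content. The central reduction is Lemma \ref{irre singular}: the $\widehat{\sl}_2$-module $\hM(\vphi)$ is irreducible if and only if every singular vector lies in $\C u$. I would therefore translate both directions of the biconditional into statements about the existence of singular vectors outside $\C u$.

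For the necessity direction I would argue the contrapositive in two subcases. If $\vphi(h(N))=0$, then Example \ref{phi(hN)=0} produces the singular vector $f(N)u$, which has length $1$ and therefore does not lie in $\C u$; by Lemma \ref{singular submod} it generates a proper nonzero submodule, so $\hM(\vphi)$ is reducible. If instead $\vphi(K)=-2$, then the Sugawara-type elements of Example \ref{phi(K)=-2} yield the singular vector $T(N-1)u$, which satisfies
$$T(N-1)u \equiv \tfrac12\vphi(h(N))\, h(-1)u \mod \hM'(N+1,1).$$
When $\vphi(h(N))\neq 0$ the right-hand side has true height $N+1$ and length $1$, hence lies outside $\hM'(N+1,1)$; since $\C u\subset\hM'(N+1,1)$, this forces $T(N-1)u\notin\C u$, and Lemma \ref{singular submod} again produces a proper submodule. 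When $\vphi(h(N))=0$ we are already in the first subcase. Thus whenever $\vphi(h(N))=0$ or $\vphi(K)=-2$, the module $\hM(\vphi)$ is reducible.

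For the sufficiency direction I would assume $\vphi(h(N))\neq 0$ and $\vphi(K)\neq -2$. Under these hypotheses Lemma \ref{w in C} asserts that every singular vector of $\hM(\vphi)$ lies in $\C u$, and Lemma \ref{irre singular} then immediately yields the irreducibility of $\hM(\vphi)$.

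The real obstacle has already been overcome by the combinatorial analysis in Lemmas \ref{H,l}, \ref{H,l+1} and \ref{w in C}, whose careful tracking of the coefficients of $X(\bar\ga)$ in $(h(-m_k)-\vphi(h(-m_k)))w$ for a hypothetical singular vector $w$ of positive length forces the factor $\vphi(K)+2$ to appear as an obstruction; the present theorem is simply the clean packaging of that computation together with the two reducibility examples.
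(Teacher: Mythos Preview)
Your proposal is correct and follows essentially the same approach as the paper's own proof, which simply cites Lemma~\ref{irre singular}, Lemma~\ref{singular submod}, Example~\ref{phi(hN)=0} and Example~\ref{phi(K)=-2} for necessity and Lemma~\ref{irre singular} together with Lemma~\ref{w in C} for sufficiency. Your extra verification that $T(N-1)u\notin\C u$ (via its leading term $\tfrac12\vphi(h(N))h(-1)u$) is a helpful elaboration of a detail the paper leaves implicit.
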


\begin{proof}
The necessity follows from Lemma \ref{irre singular}, Lemma \ref{singular submod}, Example \ref{phi(hN)=0}
and Example \ref{phi(K)=-2}.
The sufficiency follows from Lemma \ref{irre singular} and Lemma \ref{w in C}.
\end{proof}

By the isomorphism in \eqref{reduce}, we can obtain one of our main results.

\begin{thm}\label{irre general}
Suppose $N_1, N_2\in\Z$ with $N_1+N_2\geq0$ and $\vphi$ is a Lie algebra homomorphism from $\S_{N_1,N_2}$ to $\C$.
The $\hg$-module $\hM(\vphi)$ is irreducible if and only if $\vphi(h(N_1+N_2+1))\neq0$
and $\vphi(K)\neq-2$.
%The $\tg$-module $\tM_{N_1,N_2}(\vphi)$ is irreducible if and only if $\vphi(h(N_1+N_2+1))\neq0$
%and $\varphi(K)\neq-2$.
\end{thm}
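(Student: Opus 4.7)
The plan is to reduce the general statement to Theorem \ref{irre}, which already handles the case $N_1=-1$, via the automorphism twist established in Section~3.1. Since twisting by any Lie algebra automorphism $\si$ of $\hg$ induces a bijection between submodules of an $\hg$-module $M$ and submodules of $M^{\si}$, irreducibility is preserved under such twists. Consequently, $\hM(\vphi_{N_1,N_2})$ is irreducible if and only if $\hM(\vphi_{N_1,N_2})^{\si_{N_1+1}}$ is irreducible.

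Next I would invoke the isomorphism \eqref{reduce}, which identifies
\[
\hM(\vphi_{N_1,N_2})^{\si_{N_1+1}} \cong \hM(\vphi_{-1, N_1+N_2+1}),
\]
where $\vphi_{-1, N_1+N_2+1} = \vphi_{N_1,N_2} \circ \si_{N_1+1}|_{\S_{-1,N_1+N_2+1}}$. Applying Theorem \ref{irre} to the right-hand side with the parameter $N := N_1+N_2+1 \geq 1$, the module $\hM(\vphi_{-1, N_1+N_2+1})$ is irreducible if and only if $\vphi_{-1, N_1+N_2+1}(h(N_1+N_2+1)) \neq 0$ and $\vphi_{-1, N_1+N_2+1}(K) \neq -2$.

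To finish, I would translate these two conditions back to $\vphi_{N_1,N_2}$. From the explicit formulas for $\si_{N_1+1}$ in Section~3.1 one has $\si_{N_1+1}(K)=K$ and $\si_{N_1+1}(h(n)) = h(n) + \delta_{n,0}(N_1+1)K$; since $N_1+N_2+1 \geq 1 > 0$, the Kronecker delta vanishes, so $\vphi_{-1, N_1+N_2+1}(h(N_1+N_2+1)) = \vphi_{N_1,N_2}(h(N_1+N_2+1))$ and $\vphi_{-1, N_1+N_2+1}(K) = \vphi_{N_1,N_2}(K)$. This is precisely the observation already recorded in the paragraph preceding \eqref{reduce}, and substituting it yields the stated irreducibility criterion. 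There is no genuine obstacle here: the substantive work (the analysis of singular vectors via the height/length filtration and the Casimir-type argument when $\vphi(K)\neq -2$) has been done in Theorem \ref{irre}, and the present statement is a direct corollary via a bookkeeping step on the twisting automorphism.
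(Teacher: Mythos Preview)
Your argument is correct and is exactly the paper's approach: the paper's entire proof consists of the single line ``By the isomorphism in \eqref{reduce}, we can obtain one of our main results,'' and you have simply unpacked that reduction (twist by $\si_{N_1+1}$, then apply Theorem~\ref{irre}) and verified that the two numerical conditions are preserved, as the paper already noted just before \eqref{reduce}.
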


\subsection{Irreducibility in the critical case}

In this subsection, we always assume that $N_1=-1, N_2=N\ge1$, $\vphi=\vphi_{-1,N}$, $\vphi(h(N))\neq0$ and $\vphi(K)=-2$.

For any $m\in\Z$ and $\ga=(m_1,\dots,m_l)\in\Z^m$ with $m_i\leq m$, denote $\vec{\ga}=(m_{\si(1)},\dots,m_{\si(l)})\in\Ga_m$, where $\si$ is a permutation on $\{1,\dots,l\}$ such that $m_{\si(1)}\leq m_{\si(2)}\dots \leq m_{\si(l)}$.
For any $\ga=(m_1,\dots,m_l)$ and $\ga'=(m'_1,\dots,m'_{l'})\in\Ga_m$, we define the ordered juxtaposition
$$\ga\Join\ga'=\overrightarrow
{(m_1,\dots,m_l,m'_1,\dots,m'_{l'})}\in\Ga_m.$$
And for $\ga, \ga'\in\Ga$, denote $\ga\Join\ga'=(\ga_+\Join\ga'_+,\ga_0\Join\ga'_0,\ga_-\Join\ga'_-)\in\Ga$.

\begin{lem}\label{Ht property}
For any $\ga,\ga'\in\Ga$, we have that $X(\ga)X(\ga')-X(\ga\Join\ga')$ is a sum of elements
of the form $X(\ga'')$ with $\Ht(\ga'')<\Ht(\ga)+\Ht(\ga')$.
\end{lem}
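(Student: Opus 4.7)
The plan is to induct on the combined length $\ell(\ga)+\ell(\ga')$; the base case with one of $\ga,\ga'$ equal to $\bo$ is immediate. For the inductive step I rewrite the product
\[ X(\ga)X(\ga')=e(\ga_+)h(\ga_0)f(\ga_-)e(\ga'_+)h(\ga'_0)f(\ga'_-) \]
in PBW normal form by repeatedly commuting adjacent generators of different types until all $e$'s sit on the left, the $h$'s in the middle, and the $f$'s on the right. The identity should be read inside $\hM$, i.e.\ after acting on $u$. The contribution from pure swaps assembles into $X(\ga\Join\ga')$, since within each of the three blocks the generators commute: $[e(\cdot),e(\cdot)]=[f(\cdot),f(\cdot)]=0$, and the only possible correction $[h(m),h(n)]=2m\delta_{m+n,0}K$ vanishes because $\ga_0,\ga'_0\in\Ga_{-1}$ forces $m,n\leq -1$ and hence $m+n<0$.

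The three cross-type commutators responsible for corrections are $[h(n),e(m)]=2e(n+m)$, $[f(n),h(m)]=-2f(n+m)$, and $[e(n),f(m)]=h(n+m)+n\delta_{n+m,0}K$. In each swap the pair of generators contributes $(N-n)+(N-m)=2N-n-m$ to the true height of the containing product, while the commutator replaces them with a single generator of contribution $N-(n+m)$, a decrease of exactly $N$; or, when the resulting mode lies outside the admissible range so that the new operator acts on $u$ as a scalar (for instance $h(n+m)u=\vphi(h(n+m))u$ when $n+m\geq 0$, or $Ku=\vphi(K)u$), with a constant of contribution $0$, a decrease of $2N-n-m\geq N+1$. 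Either way, every commutator correction is an intermediate product with strictly fewer generators and strictly smaller true height than the original $\Ht(\ga)+\Ht(\ga')$.

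Each correction is itself a product of $\hg$-generators acting on $u$, and can be reduced further by the same commutation rules; by strong induction on total length (equivalently on true height) every such correction rewrites as $\sum c_{\ga''}X(\ga'')$ with $\Ht(\ga'')$ no larger than the true height of the correction, hence $<\Ht(\ga)+\Ht(\ga')$. The main obstacle is purely the bookkeeping: one must verify that the ``pure-swap'' contributions collected through the full reordering really combine into exactly one copy of $X(\ga\Join\ga')$ with the correct non-decreasing ordering prescribed by $\Join$ inside each of the three blocks, and that intermediate terms involving $h(k)$ for $k\geq 0$ or the central element $K$ genuinely collapse (on $u$) to scalar multiples of admissible $X(\ga'')$ rather than persisting as out-of-range operators.
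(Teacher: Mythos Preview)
Your proposal is correct and follows essentially the same approach as the paper's proof. The paper is simply much terser: it reduces by induction on $\ell(\ga)$ and $\ell(\ga')$ to the case $\ell(\ga)=\ell(\ga')=1$ and then records only the key inequality $\Ht(x(i)y(j)u)=\Ht(y(j)x(i)u)>\Ht([x,y](i+j)u)$, which is exactly your observation that each cross-type commutator drops the true height by $N\geq 1$; your version additionally spells out why the pure-swap contribution is precisely $X(\ga\Join\ga')$ and what happens to out-of-range modes on $u$, which the paper leaves implicit.
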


\begin{proof}
By induction on $\ell(\gamma)$ and $\ell(\gamma')$, we need only to prove for the case $\ell(\ga)=\ell(\ga')=1$ which  follows from the fact that $$\Ht(x(i)y(j)u)=\Ht(y(j)x(i)u)>\Ht([x,y](i+j)u)$$
for any $x,y\in\{e,h,f\}$ with all vectors in the formula nonzero.
\end{proof}

%Note that this lemma holds in general case.

Now consider the operators defined in Subsection \ref{affineVA}.
Noticing that $f(i)u=h(i)u=e(i-N-1)u=0$ for all $i>N$, we have the following equalities
  for any $n<N$:
{\Small$$\aligned
4T(n)u=&\sum_{i\in\Z}\big(2:e(-i)f(n+i):+2:f(-i)e(n+i):\\
&+:h(-i)h(n+i):\big)u\\
=&\big(2\sum_{i=1}^{N-n} e(-i)f(n+i)+2\sum_{i=1}^\infty f(-i)e(n+i)+2\sum_{i=-N}^0 e(n+i)f(-i)\\
&+\sum_{i=1}^{N-n} h(-i)h(n+i)+\sum_{i=-N}^0 h(n+i)h(-i)\big)u=\tau(n)u,
\endaligned$$}
%Then we get
%{\Small$$\aligned
%4T(n)u=
%&\big(2\sum_{i=1}^{N-n} e(-i)f(n+i)+2\sum_{i=-N}^{-n-1}e(n+i)f(-i)+2\sum_{i=-n}^0(h(n)+\delta_{n,0}i(e|f)K)\\
%&\big(4\sum_{i=1}^{N-n} e(-i)f(n+i)+2\sum_{i=1}^{N-n} h(-i)h(n+i)\\
%&+2(n+1)(h(n)+\delta_{n,0}i(e|f)K)+\sum_{j=-n}^0 h(-j)h(n+j)\big) u,
%\endaligned$$} if $0\leq n<N$, and
%{\Small$$\aligned
%4T(n)u=
%&\big(2\sum_{i=1}^{N-n} e(-i)f(n+i)+2\sum_{i=1}^{-n} f(-i)e(n+i)+2\sum_{j=-n}^{N-n} e(-j)f(n+j)\\
%&+\sum_{i=1}^{N-n} h(-i)h(n+i)+\sum_{j=-n}^{N-n} h(-j)h(n+j)\big)u\\
%&\big(4\sum_{i=1}^{N-n} e(-i)f(n+i)+2(n+1)h(n)\\
%&+\sum_{i=1}^{N-n} h(-i)h(n+i)+\sum_{j=-n}^{N-n} h(-j)h(n+j)\big)u,
%\endaligned$$} if $n<0$.
where
{\Small\begin{equation}\label{tau}\aligned
\tau(n) = &\Big(4\sum_{i=1}^{N-n} e(-i)f(n+i)+2(n+1)(h(n)+\delta_{n,0}i(e|f)K)\\
&+2\sum_{i=1}^{N-n} h(-i)h(n+i)+\sum_{j=-n}^0 h(-j)h(n+j)\Big) , {\text{ if }} 0\leq n<N,\\
 \tau(n)=&\Big(4\sum_{i=1}^{N-n} e(-i)f(n+i)+2(n+1)h(n)+\sum_{i=1}^{N-n} h(-i)h(n+i)\\
&+\sum_{j=-n}^{N-n} h(-j)h(n+j)\Big), {\text{ if }} n<0.
\endaligned\end{equation}}

%$$\aligned
%4T(n)u=&\sum_{i\in\Z}(2:e(-i)f(n+i):+2:f(-i)e(n+i):+:h(-i)h(N+i):)u\\
%=&\sum_{i=1}^{N-n}(4e(-i)f(n+i)+2h(-i)h(n+i))u+\xi_nu\\
%\endaligned$$
%for some $\xi_n\in\C$. Denote
%\begin{equation}\label{tau}\aligned
%\tau(n)=&\sum_{i=1}^{N-n}e(-i)f(n+i)+\sum_{i=1}^{-n}\frac{1}{2}h(-i)h(n+i)\\
%&\hskip1cm+\sum_{i=-n}^{N-n}\frac{1}{2}h(-i)\vphi(h(n+i))+\xi_n
%\endaligned\end{equation}
For convenience, we denote
$$T'(n)=\frac{T(n)}{2\vphi(h(N))},\ \text{and}\ \tau'(n)=\frac{\tau(n)}{2\vphi(h(N))}.$$
Then we have
$(\tau'(n)-h(n-N))u\in M'(H,l)$ where $H=\Ht(h(n-N))=\Ht(\tau(n))=2N-n$ and $l=\ell(h(n-N))=1$ for any $n<N$.
Moreover, for any $n,n'<N$, we have
$$\aligned
&\tau(n)\tau(n')u=\tau(n)T(n')u=T(n')\tau(n)u=T(n')T(n)u\\
=&T(n)T(n')u=T(n)\tau(n')u=\tau(n')T(n)u=\tau(n')\tau(n)u.
\endaligned$$
Hence for any polynomial $P$ in $k$ variables, we have
$P(\tau(n_1),\dots,\tau(n_k))u=P(T(n_1),\dots,T(n_k))u$.

We define a total order on $\Z_+\times\Z_+$ as follows: For any $(H,l),(H',l')\in\Z_+\times\Z_+$,
write $(H,l)\succeq(H',l')$ if $H>H'$ or $H=H',l\leq l'$.

\begin{lem}\label{w in C[T]u}
Suppose $\vphi(K)=-2$. Then $\C[T(N-i), i\in\N]u$ is the set of all singular vectors in $\M(\vphi)$.
\end{lem}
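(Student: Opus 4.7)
The two inclusions are handled separately. The inclusion $\C[T(N-i),\,i\in\N]u\subseteq\text{Sing}(\hM)$ is immediate from Example \ref{phi(K)=-2}: at level $-2$ each $T(n)$ commutes with all of $U(\hg)$, so it sends singular vectors to singular vectors, and $u$ is itself singular. For the reverse inclusion I induct on $H_{\max}(w):=\max\{\Ht(\gamma):a_\gamma\ne0\}$; the base case $H_{\max}=0$ gives $w\in\C u$.

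The heart of the argument is an auxiliary claim about the elements
\[
P_{\gamma_0}u := T'(m_1+N)T'(m_2+N)\cdots T'(m_l+N)u \;\in\; \C[T(N-i)]u,
\]
associated to each tuple $\gamma_0=(m_1,\dots,m_l)\in\Gamma_{-1}$: writing $H_0:=\Ht(\gamma_0)$, the height-$H_0$ part of $P_{\gamma_0}u$ lies in $V_{H_0}^{(0)}:=\mathrm{span}\{h(\gamma'_0)u:\Ht(\gamma'_0)=H_0\}$, with leading $(H_0,l)$-component exactly $h(\gamma_0)u$. I prove this by induction on $l$, using centrality of $T(n)$ to move $T'(m_1+N)$ past the other factors and the formula \eqref{tau} to control $\tau'(n)u$: although $\tau'(n)u$ has an $\ell=2$ tail at height $2N-n$ when $n<0$, that tail itself consists of Cartan-only products $h(n+i)h(-i)u$, and Cartan modes of like sign commute.

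Next I show that every $(H,l)$-component of the singular $w$ (not merely its leading one) lies in $V_H^{(0)}$. This is an inner induction on $l\geq l_{\min}$: the base case is Lemma \ref{H,l}; for the step, having matched lower-$l$ components via a combination $P_{<l}u = \sum a_{\gamma_0}P_{\gamma_0}u$ whose cross-contributions at the current $l$ also lie in $V_H^{(0)}$ (by the auxiliary claim), I apply Lemma \ref{H,l} to the singular vector $w-P_{<l}u$ at its new leading position $(H,l)$ to place the next component in $V_H^{(0)}$. The correspondence $\gamma_0\mapsto P_{\gamma_0}u\pmod{\hM_{<H}}$ is triangular in $\ell(\gamma_0)$ with identity diagonal by the auxiliary claim, so it yields a linear isomorphism from the span of $\{P_{\gamma_0}u:\Ht(\gamma_0)=H\}$ onto $V_H^{(0)}$; a dimension count via partitions confirms the two sides have matching gradings by $\ell$. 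Hence there is $P\in\C[T(N-i)]$ whose height-$H$ part matches that of $w$; then $w-Pu$ is singular with strictly smaller $H_{\max}$, and the outer induction gives $w-Pu\in\C[T(N-i)]u$, whence $w\in\C[T(N-i)]u$.

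The main obstacle is the auxiliary claim: one must track the \emph{full} height-$H_0$ part of $P_{\gamma_0}u$, not just its leading term, because a naive expectation that nonleading contributions drop strictly below height $H_0$ fails when some $n_i=m_i+N<0$. The induction nevertheless succeeds because every such tail remains of Cartan-only shape, a property preserved under multiplication by $h(m_j)$'s (thanks to $[h(m),h(m')]=2m\delta_{m+m',0}K$ vanishing for like-sign modes) and under further application of the central $T'$'s.
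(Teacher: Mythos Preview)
Your auxiliary claim is false, and the error is precisely in the sentence ``that tail itself consists of Cartan-only products $h(n+i)h(-i)u$.'' Look again at \eqref{tau}: for every $n<N$ (not only $n<0$) the expression $\tau(n)$ contains the sum $4\sum_{i=1}^{N-n}e(-i)f(n+i)$, and each term $e(-i)f(n+i)u$ is a genuine basis vector $X(\gamma)u$ with $\gamma=((-i),\bo,(n+i))\in\Gamma$, of length~$2$ and true height $(N+i)+(N-n-i)=2N-n$. Thus already for $l=1$ and $\gamma_0=(-1)$ one has
\[
P_{\gamma_0}u=T'(N-1)u=h(-1)u+\frac{2}{\vphi(h(N))}\,e(-1)f(N)u+(\text{terms of height}<N+1),
\]
and the height-$(N+1)$ part is not in $V_{N+1}^{(0)}$. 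Consequently your inner induction breaks: the ``cross-contributions'' of $P_{<l}u$ at the current $(H,l)$ need not be Cartan, so you cannot conclude that every $(H,l)$-component of $w$ lies in $V_H^{(0)}$, and the triangular isomorphism onto $V_H^{(0)}$ does not exist.

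The repair is to abandon the auxiliary claim entirely and argue exactly as the paper does: induct on the pair $(H,l)$ under the order $\succeq$. At the current leading position $(H,l)$ of the singular vector, Lemma~\ref{H,l} forces the $(H,l)$-component to be a polynomial $P(h(-1),\dots,h(-m))u$ with each monomial of true height $H$ and length $l$; since $(\tau'(n)-h(n-N))u\in\hM'(2N-n,1)$, Lemma~\ref{Ht property} gives $P(\tau'(N-1),\dots,\tau'(N-m))u-w\in\hM'(H,l)$, which is again singular and strictly smaller in the order. Well-foundedness is guaranteed by the observation (which you also omit) that any Cartan tuple $\gamma_0\in\Gamma_{-1}$ with $\Ht(\gamma_0)=H$ satisfies $\ell(\gamma_0)\le H/(N+1)$, so at fixed $H$ the length cannot increase indefinitely.
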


%\begin{proof}
%Let $w\neq\C u$ be a singular vector in $\hM$ and suppose $\Ht(w)=H, \ell(w)=l$.
%We prove the result by induction on $(H,l)$.
%By Lemma \ref{H,l}, we have
%$$w\equiv P(h(-1),\dots,h(-m)) \mod \hM'(H,l),$$
%where $P(h(-1),\dots,h(-m))$ is a polynomial in $h(-1),\dots,h(-m)$ for some $m\in\N$
%such that each of its monomial is a scalar multiple of $h(\ga)$ for some
%$\ga\in\Ga_{-1}$ with $\Ht(\ga)=H$ and $\ell(\ga)=l$.
%
%By the previous argument, we see that
%$$T'(N-i)u=\tau(N-i)u\equiv h(-i)u\mod \hM'(N+i,1),\ \forall\ i\in\N,$$
%from which we deduce
%$$\aligned
%&P\big(T'(N-1),\dots,T'(N-n)\big)u-w\\
%=&P\big(\tau'(N-1),\dots,\tau'(N-n)\big)u-w\in\hM'(H,l)
%\endaligned$$
%by Lemma \ref{Ht property}. This element, if nonzero, is a singular vector in $\hM'(H,l)$
%and hence lies in $\C[T(N-i), i\in\N]u$ by induction hypothesis.
%Thus we obtain $w\in \C[T(N-i), i\in\N]u$ as desired.
%\end{proof}

\begin{proof} It is clear that nonzero vectors in $\C[T(N-i), i\in\N]u$ are singular vectors in $\hM=\M(\vphi)$.
	
Let $w\neq\C u$ be a singular vector in $\hM$ with $w\in\hM(H,l)$ and $w\notin\hM'(H,l)$.
By Lemma \ref{H,l}, we have
$$w\equiv P(h(-1),\dots,h(-m))u \mod \hM'(H,l),$$
where $P(h(-1),\dots,h(-m))$ is a polynomial in $h(-1),\dots,h(-m)$ for some $m\in\N$
such that each of its monomial is a scalar multiple of $h(\ga)$ for some
$\ga\in\Ga_{-1}$ with $\Ht(\ga)=H$ and $\ell(\ga)=l$. Then we have $H\geq (N+1)l$.

We continue by the induction on $(H,l)$ with respect to $\succeq$.
By the previous argument, we have
$$T'(N-i)u=\tau(N-i)u\equiv h(-i)u\mod \hM'(N+i,1),\ \forall\ i\in\N.$$
Then we get a new singular vector
$$\aligned
&P\big(T'(N-1),\dots,T'(N-n)\big)u-w\\
=&P\big(\tau'(N-1),\dots,\tau'(N-n)\big)u-w\in\hM'(H,l),
\endaligned$$
by Lemma \ref{Ht property}. This element, if nonzero, is a singular vector in $\hM'(H,l)$
and hence lies in $\C[T(N-i), i\in\N]u$ by the induction hypothesis.
Thus we obtain $w\in \C[T(N-i), i\in\N]u$ as desired.
\end{proof}

Let $ \Ga^{(0)}$ be the subset of $\Ga$ consisting of elements of the form $\ga=(\ga_+,\ga_0,\ga_-)\in\Ga$ with $\ga_0=\bo$.
For any $H, l\in\Z_+$, we denote $J(H,l)_0=J(H,l)\cap \Ga^{(0)}$, $\bar J(H,l)_0=\bar J(H,l)\cap \Ga^{(0)}$ and $J'(H,l)_0=J'(H,l)\cap \Ga^{(0)}$.
It is also convenient to denote
$$J'(H)_0=\{\ga\in \Ga^{(0)}\ |\ \Ht(\ga)< H\} $$
for any $H\in\Z_+$.

\begin{lem}\label{h to tau}
For any $\ga\in\Ga$ where $\ga_0=(m_1,\dots,m_k)\neq\bo$ with all negative $m_i\in\Z$, we have
$$\aligned
X(\ga)u=& X(\ga^{(0)})(h(m_1)-\tau'(N+m_1))\cdots(h(m_k)-\tau'(N+m_k))u+w',
\endaligned$$
for some $w'\in\sum_{\ga'\in J'(H)_0}X(\ga')\C[T(N-i), i\in\N]u$
and
$$X(\ga)u\in \sum_{\ga''\in J'(H,l+k-1)_0}X(\ga'')\C[T(N-i), i\in\N]u$$
where $\ga^{(0)}=(\ga_+,\bo,\ga_-)\in \Ga^{(0)}$, $H=\Ht(\ga)$ %\sum_{i=1}^k(N+m_i)$
and $l=\ell(\ga)$.
\end{lem}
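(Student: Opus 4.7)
The plan is to prove (1) by induction on $k=\ell(\ga_0)$, and then derive (2) from (1) combined with strong induction on $H=\Ht(\ga)$. The two basic ingredients are the identity $\tau'(N+m_j)u=4T'(N+m_j)u$ (which follows from $\tau(n)u=4T(n)u$ combined with the normalizations $\tau'=\tau/(2\vphi(h(N)))$ and $T'=T/(2\vphi(h(N)))$), and the centrality of $T(n)$ in $U(\hg)$ at level $-2$, which permits us to move any $T'(N+m_j)$ freely past $X(\ga^{(0)})$ or $h(m_i)$.

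For the base case $k=1$ of (1), I would compute directly
\begin{align*}
X(\ga^{(0)})(h(m_1)-\tau'(N+m_1))u &= X(\ga^{(0)})h(m_1)u - X(\ga^{(0)})\cdot 4T'(N+m_1)u\\
&= X(\ga^{(0)})h(m_1)u - 4T'(N+m_1)X(\ga^{(0)})u.
\end{align*}
The second summand lies in $X(\ga^{(0)})\C[T(N-i),i\in\N]u$, with $\ga^{(0)}\in J'(H)_0$ since $\Ht(\ga^{(0)})=H-(N-m_1)<H$. For the first, the identity $[h(m_1),f(n_p)]=-2f(n_p+m_1)$ yields $X(\ga^{(0)})h(m_1)u-X(\ga)u = 2\sum_p X(\tilde\ga^{(p)})u$, where each $\tilde\ga^{(p)}=(\ga_+,\bo,\tilde\ga_-^{(p)})\in\Ga^{(0)}$ has $\Ht(\tilde\ga^{(p)})=H-N<H$, so both contributions lie in $\sum_{\ga'\in J'(H)_0}X(\ga')\C[T(N-i),i\in\N]u$.

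For the inductive step of (1), I would peel off the leftmost factor $h(m_1)-\tau'(N+m_1)$ and apply the inductive hypothesis to the remaining length-$(k-1)$ product. The main technical difficulty is that $\tau'(n)$ is not central in $U(\hg)$: when $\tau'(N+m_1)$ appears to the left of a product still containing other $h$'s, one must control the commutators $[\tau'(N+m_1),h(m_i)]$. My strategy is to decompose $\tau'(n)=4T'(n)+R'(n)$ with $R'(n)u=0$, move the central $4T'(n)$ out freely past all intervening factors, and use the explicit formula for $\tau(n)$ together with $R'(n)u=0$ to show that the residual commutator contributions produce only terms of strictly smaller $\Ht$ that can be absorbed into $w'$.

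For (2), I would combine (1) with the approximation $4T'(N+m_j)u\equiv h(m_j)u\pmod{\hM'(N-m_j,1)}$, iterated application of $T$-centrality, and the Ht-property lemma to conclude
\[
X(\ga)u = X(\ga^{(0)})\cdot 4^k T'(N+m_1)\cdots T'(N+m_k)u + (\text{terms of } \Ht<H).
\]
The main term lies in $X(\ga^{(0)})\C[T(N-i),i\in\N]u$ with $\ga^{(0)}\in J'(H,l+k-1)_0$, and the lower-$\Ht$ stuff is handled by the strong induction: each contributing $X(\ga''')u$ with $\ga'''_0\neq\bo$ is rewritten via (2) inductively (valid since $\Ht(\ga''')<H$), while those with $\ga'''_0=\bo$ are already in the required form $X(\ga''')\cdot 1\cdot u$. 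The most delicate step is the inductive handling of (1), where careful bookkeeping of the $\Ht$-filtration is needed to ensure the non-central $\tau'$-residues produce only strictly smaller-$\Ht$ corrections.
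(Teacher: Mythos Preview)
Your plan is workable in principle, but the paper's proof differs in two structural choices that make it considerably cleaner. First, the paper inducts on $H=\Ht(\ga)$ rather than on $k=\ell(\ga_0)$; second, and more importantly, the paper peels off the \emph{rightmost} factor $h(m_k)$ rather than the leftmost. This matters because when you peel from the right, the factor $(h(m_k)-\tau'(N+m_k))$ acts directly on $u$, and there the identity $\tau'(N+m_k)u=4T'(N+m_k)u\in\C[T(N-i),i\in\N]u$ applies immediately with no commutator analysis needed. The paper then simply observes that the filtration space $\sum_{\ga'\in J'(H,l)_0}X(\ga')\C[T(N-i),i\in\N]u$ is stable under left multiplication by $h(m_k)$, so one can commute $h(m_k)$ back through $X(\ga^{(0)})\prod_{j<k}(h(m_j)-\tau'(N+m_j))$ working entirely modulo that filtration. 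The induction on $H$ terminates because each peel drops $\Ht$ by $N-m_k>0$.

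Your left-peel forces $\tau'(N+m_1)$ to act on $\prod_{j\geq 2}(h(m_j)-\tau'(N+m_j))u$, which is not $u$, so the decomposition $\tau'=4T'+R'$ with $R'(n)u=0$ does not help directly: you must commute $R'(N+m_1)$ to the right past $k-1$ factors $(h(m_j)-\tau'(N+m_j))$, and each such commutator involves the full explicit formula for $\tau(n)$. This is the ``delicate bookkeeping'' you anticipate, and it can be carried out, but it is exactly what the paper avoids by peeling from the right. Your induction on $k$ also couples awkwardly with the $J'(H)_0$ error control, since the inductive hypothesis is stated for $\bar\ga$ with a different $\Ht$; the paper's induction on $H$ makes this coupling automatic. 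Both approaches yield the result, but if you switch to a right-peel with induction on $H$, the commutator analysis disappears and the proof shortens to a few lines.
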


\begin{proof}
We prove the first assertion by the induction on $H$.
As before, set $\bar\ga_0=(m_1,\dots,m_{k-1})$ and $\bar\ga=(\ga_+,\bar\ga_0,\ga_-)$. Note that $$M'(H, l)=\sum_{\ga'\in J'(H, l)_0}X(\ga')\C[T(N-i), i\in\N]u$$
 is stable under the action of $h(m_k)$. Using the induction hypothesis repeatedly,
we obtain the following equivalences  modulo $M'(H, l)$:
$$\aligned
&X(\ga)u\equiv h(m_k)X(\bar\ga)u\\ %(\text{Lemma\ \ref{Ht property}})\\
\equiv& h(m_k)X(\ga^{(0)})(h(m_1)-\tau'(N+m_1))\cdots(h(m_{k-1})-\tau'(N+m_{k-1}))u\\
\equiv& X(\ga^{(0)})(h(m_1)-\tau'(N+m_1))\cdots(h(m_{k-1})-\tau'(N+m_{k-1})) h(m_k)u\\
\equiv& X(\ga^{(0)})(h(m_1)-\tau'(N+m_1))\cdots(h(m_k)-\tau'(N+m_k))u.\\
\endaligned$$

The result follows by the
induction on $H$, replacing $u$ by some elements in $\C[T(N-i), i\in\N]u$ if necessary.

Now, for the second assertion we note that
$$X(\ga^{(0)})(h(m_1)-\tau'(N+m_1))\cdots(h(m_k)-\tau'(N+m_k))u\in \widehat M'(H,l+k-1),$$
by \eqref{tau} and Lemma \ref{Ht property}. Then the statement follows
 by the induction on $(H, l)$. \end{proof}

For any $\theta_i\in\C, i\in\N$, denote by $\hM(\theta_i, i\in\N)$ the submodule of $\hM=\hM(\vphi)$ generated by the singular vectors
$$(T(N-i)-\theta_i)u=(\tau(N-i)-\theta_i)u, i\in\N,$$
and consider the quotient module
$$\bM=\bM(\vphi;\theta_i, i\in\N):=\hM(\vphi)/\hM(\theta_i,i\in\N).$$

For any $w\in\hM$, we use $\bar w$ to denote its image in $\bM$.
%$\bX(\ga)$ to denote its image in $\bM$ and similar for $\bar e(\ga_+), \bar f(\ga_-)$ and $\bar h(\ga_0)$.
From Lemma \ref{h to tau} we see that $X(\ga)\bu, \ga\in \Ga^{(0)}$ form a basis of $\bM$.
For any $H, l\in\Z_+$, denote
$$\bM(H,l)=\sum_{\ga\in J(H,l)_0}\C X(\ga)\bu\ \text{and}\ \bM'(H,l)=\sum_{\ga\in J'(H,l)_0}\C X(\ga)\bu.$$
It is also convenient to denote $\bM'(H)=\sum_{\ga\in J'(H)_0}\C X(\ga)\bu$.

A nonzero element $w\in\bM$ is called a {\bf singular vector} of weight $\lambda\in\C$ if
$$e(N_1+i)w=f(N_2+i)=(h(i)-\vphi(h(i)))=0$$ for any $i\in\N$ and $hw=\l w$.
 The following analogue of Lemma \ref{irre singular} can be shown in a similar way, so we omit the proof.

\begin{lem}\label{irre singular bM}
The $\widehat{\sl}_2$-module $\bM(\vphi;\theta_i,  i\in\N)$ is irreducible if and only if all its
singular vectors lie in $\C\bu$.
\end{lem}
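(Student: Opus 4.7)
The proof parallels that of Lemma \ref{irre singular}, with Lemma \ref{h to tau} serving as the bridge between the preferred basis $\{X(\gamma)\bu : \gamma\in\Ga^{(0)}\}$ of $\bM$ and the height-length bookkeeping previously developed on $\hM$.

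\emph{Necessity.} I would first establish the analogue of Lemma \ref{singular submod}: for any nonzero singular vector $w\in\bM$, one has $U(\hg)w=\bM$ if and only if $w\in\C\bu$. The singular vector conditions together with PBW give $U(\hg)w=\sum_{\gamma\in\Ga}\C X(\gamma)w$. Expanding $w=\sum_{\beta\in\Ga^{(0)}} a_\beta X(\beta)\bu$ in the basis supplied by Lemma \ref{h to tau}, I would use Lemma \ref{Ht property} to track the maximum length appearing in the basis expansion of each $X(\gamma)w$, concluding that when $\ell(w)\geq 1$ (i.e., $w\notin\C\bu$) the vector $\bu$ itself — which has length $0$ — cannot lie in $U(\hg)w$. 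The ``only if'' direction then follows by contrapositive.

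\emph{Sufficiency.} Assume all singular vectors of $\bM$ lie in $\C\bu$ and let $W$ be a nonzero submodule. Since $\bM$ is a weight module for $h(0)$, so is $W$. Pick $w\in W$ a weight vector with minimal true height $\Ht(w)$ and, among such choices, maximal weight (possible since at each fixed true height only finitely many weights occur). I claim $w$ is singular. For each operator in $\{e(i),\,f(N+i),\,h(i)-\vphi(h(i)) : i\in\N\}\cup\{e(0)\}$, the commutator expansion \eqref{x(j)w} shows that the result, when nonzero, has strictly smaller true height; the exception is $e(0)$, which at worst preserves the true height while strictly raising the weight. By minimality of $\Ht(w)$ and maximality of the weight, every such vector must vanish, so $w$ is a singular vector. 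By hypothesis $w\in\C\bu$, which forces $\bu\in W$ and hence $W=\bM$, since $\bu$ generates $\bM$.

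\emph{Main obstacle.} The technical step requiring care is propagating the height estimate \eqref{x(j)w} from $\hM$ to the quotient $\bM$: the bracket $[x(j),X(\gamma)]\bu$ naturally produces terms $X(\gamma'')\bu$ with $\gamma''_0\neq\bo$, which are not in the preferred basis of $\bM$. Invoking Lemma \ref{h to tau} rewrites each such term as a linear combination of $X(\gamma''')\bu$ with $\gamma'''\in\Ga^{(0)}$, at the price of introducing factors of $T(N-i)$ (which act on $\bM$ as the scalars $\theta_i$) and adjustments $w'\in\sum X(\gamma')\C[T(N-i)]u$ of strictly smaller true height. Verifying that none of these adjustments contributes a new top true-height term is the main bookkeeping hurdle; once it is in place, the argument of Lemma \ref{irre singular} transfers essentially verbatim to $\bM$.
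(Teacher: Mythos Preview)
Your proposal is correct and matches the paper's approach: the paper omits the proof, saying only that it is shown ``in a similar way'' to Lemma~\ref{irre singular}, and you carry out precisely this adaptation, correctly identifying that Lemma~\ref{h to tau} is what lets the height estimates of \eqref{x(j)w} survive the passage from the $\Ga$-basis of $\hM$ to the $\Ga^{(0)}$-basis of $\bM$. One minor remark on the necessity direction: Lemma~\ref{Ht property} controls true heights rather than lengths, so to conclude $\bu\notin U(\hg)w$ when $\ell(w)\geq 1$ you should instead argue via the associated graded of the length filtration on $\bM$ (a polynomial ring on the symbols $\{e(m):m<0\}\cup\{f(m):m\leq N\}$, hence a domain), exactly as underlies Lemma~\ref{singular submod}.
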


Now we can determine the irreducibility of $\bM$ and describe the maximal submodules of $\hM$ as a consequence.

\begin{thm}\label{irre K=-2}
Suppose $\vphi(h(N))\neq0$ and
$\vphi(K)=-2$. Then the $\hsl_2$-module $\bM(\vphi;\theta_i,  i\in\N)$ is irreducible for any
$\theta_i\in\C$.
\end{thm}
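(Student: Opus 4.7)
By Lemma \ref{irre singular bM}, it suffices to show that every singular vector in $\bM:=\bM(\vphi;\theta_i,i\in\N)$ lies in $\C\bu$. The plan is to adapt the leading-term bookkeeping that drives Lemma \ref{H,l} to the quotient basis $\{X(\gamma)\bu : \gamma\in\Ga^{(0)}\}$ supplied by Lemma \ref{h to tau}, exploiting the fact that the delicate $\vphi(K)+2$ obstruction from Lemma \ref{w in C} will not reappear in this setting.

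Assume for contradiction that $\bw\in\bM$ is a singular vector with $\bw\notin\C\bu$. Write $\bw=\sum_{\gamma\in\Ga^{(0)}}a_\gamma X(\gamma)\bu$ and set $H=\max\{\Ht(\gamma):a_\gamma\neq 0\}$, $l=\min\{\ell(\gamma):a_\gamma\neq 0,\ \Ht(\gamma)=H\}$, so some $\gamma\in\bar J(H,l)_0$ with $\gamma\neq\bo$ has $a_\gamma\neq 0$; since $\gamma_0=\bo$, either $\gamma_-\neq\bo$ or $\gamma_+\neq\bo$. I will treat the case $\gamma_-=(m_1,\dots,m_k)\neq\bo$ with $m_k=\max\gamma_-$ by applying the singular-vector relation $e(N-m_k)\bw=0$ (valid since $N-m_k\geq 0$); the case $\gamma_-=\bo,\gamma_+\neq\bo$ is handled symmetrically by applying $f(N-m_k)\bw=0$ with $m_k=\max\gamma_+\leq -1$ (valid since $N-m_k\geq N+1$). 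For each $\gamma'\in J(H,l)_0$ with $a_{\gamma'}\neq 0$, I expand $[e(N-m_k),X(\gamma')]$ in PBW form exactly as in Lemma \ref{H,l}. Setting $\bar\gamma=(\gamma_+,\bo,(m_1,\dots,m_{k-1}))\in\Ga^{(0)}$, the same height--length analysis pinpoints the unique contribution to $X(\bar\gamma)\bu$ at true height exactly $\Ht(\bar\gamma)$: it comes from $\gamma'=\gamma$, arises from the summand $X(\bar\gamma)h(N)\bu$, and equals $a_\gamma\cdot|\{i:m_i=m_k\}|\cdot\vphi(h(N))\cdot X(\bar\gamma)\bu$. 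All other terms either have smaller true height, or, when of the form $X(\gamma'')h(n)\bu$ with $\gamma''\notin\Ga^{(0)}$ or $n<N$, produce only strictly lower $(\Ht,\ell)$-contributions once rewritten in the $\Ga^{(0)}$-basis via Lemma \ref{h to tau} together with the scalar identities $T(N-i)\bu=\theta_i\bu$. Since $\vphi(h(N))\neq 0$, the coefficient of $X(\bar\gamma)\bu$ in $e(N-m_k)\bw$ is nonzero, contradicting $e(N-m_k)\bw=0$.

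The main obstacle is to confirm that the passage from $\hM$-level PBW calculations to the quotient $\bM$ does not introduce unwanted contributions at the leading true height that would cancel the term isolated above. What makes this work is that the relevant leading contribution uses only $h(N)\bu=\vphi(h(N))\bu$, a scalar action that transfers verbatim from $\hM$ to $\bM$. Moreover, the delicate $\vphi(K)+2=0$ obstruction which forced the careful cancellation in Lemma \ref{w in C} never appears: that step of the non-critical proof required applying $h(-m_k)$ to $w$, whereas in $\bM$ the basis $\Ga^{(0)}$ carries no internal $h$-factors, so the analog of Lemma \ref{H,l+1} is not needed and the singular-vector argument closes at the $e$ and $f$ level alone.
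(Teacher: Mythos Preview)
Your proposal is correct and follows essentially the same route as the paper's proof: both argue by contradiction from Lemma~\ref{irre singular bM}, pick a minimal leading monomial $\gamma\in\bar J(H,l)_0$ in the $\Ga^{(0)}$-basis, apply $e(N-m_k)$ (or $f(N-m_k)$), and isolate the nonzero coefficient $|\{i:m_i=m_k\}|\,\vphi(h(N))$ on $X(\bar\gamma)\bu$. The paper makes the bookkeeping for the $h(n)$-terms with $n<0$ more explicit by appealing directly to \eqref{tau} and Lemma~\ref{Ht property}, whereas you package this step as an invocation of Lemma~\ref{h to tau}; both yield the same $(\Ht,\ell)$-estimate (contributions land in $\bM'(\Ht(\bar\gamma),\ell(\bar\gamma))$ and hence miss $\bar\gamma$). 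One small phrasing slip: your clause ``$X(\gamma'')h(n)\bu$ with $\gamma''\notin\Ga^{(0)}$'' is vacuous here, since in the PBW expansion of $[e(N-m_k),X(\ga')]$ with $\ga'_0=\bo$ every $\ga''$ already lies in $\Ga^{(0)}$; the only genuine case distinction is $n\geq 0$ (scalar action) versus $n<0$ (rewrite via Lemma~\ref{h to tau}).
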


%\begin{lem}\label{w in C bM}
%All singular vectors in $\bM$ lie in $\C\bu$.
%\end{lem}

\begin{proof} We will show that all singular vectors of $\bM$ lie in $\C\bu$.
To the contrary, take a singular vector $w\in\bM$ which is not a multiple of $\bu$.
There exist $H\in\Z_+$ and $l\in\N$ such that
$$w=\sum_{\ga'\in J(H,l)_0}a_{\ga'} X(\ga')\bu$$
with finitely many $a_{\ga'}\in\C$ nonzero and at least one $a_{\ga'}\neq0$ for some $\ga'\in\bar J(H,l)_0$.

Fix some $\ga\in\bar J(H,l)_0$ with $a_\ga\neq0$. Then we have $\ga_-\neq\bo$ or $\ga_+\neq\bo$.

First assume $\ga_-=(m_1,\dots,m_k)\neq\bo$ for some $m_1,\dots,m_k\in\Z$ and $k\in\N$. %such that $m_1\leq m_2\dots\leq m_k\leq N$.
Set $\bar\ga_-=(m_1,\dots,m_{k-1})\in\Ga_N$ and $\bar\ga=(\ga_+,\bo,\bar\ga_-)\in\Ga^{(0)}$.
It is clear that $\Ht(\bar\ga)=H-N+m_k$ and $\ell(\bar\ga)=l-1$.

Now we will show that $$e(N-m_k) w=e(N-m_k)\sum_{\ga'\in J(H,l)_0}a_{\ga'} X(\ga')\bu\ne0,$$ which contradicts the fact that $w$ is a singular vector.

Note that
\begin{equation}\label{3.15e(N-m_k)}\aligned
	  e(N-m_k)&\sum_{\ga'\in J(H,l)_0}a_{\ga'} X(\ga')\bu=\sum_{\ga'\in J(H,l)_0}a_{\ga'}e(\ga'_+)e(N-m_k)f(\ga'_-)\bar u\\
	=&\sum_{\ga'\in J(H,l)_0}a_{\ga'}e(\ga'_+)[e(N-m_k),f(\ga'_-)]\bar u.\\
	\endaligned\end{equation}
Take any $\ga'=(\ga'_+,\ga'_0,\ga'_-)\in J(H,l)_0$. By the PBW Theorem, we can write
\begin{equation}\label{3.15sum}\aligned
	&e(\ga'_+)[e(N-m_k),f(\ga'_-)]\\
	=&\sum_{\ga''\in I_1}b_{\ga''}X(\ga'')+\sum_{(\ga'',n)\in I_2}c_{\ga'',n}X(\ga'')\big(h(n)+\delta_{n,0}d_{\gamma''}K\big)+Y,
	\endaligned\end{equation}
where $I_1$ consists of elements $\ga''\in\Ga^{(0)}$ with $\ga''_+=\ga'_+, \ell(\ga''_-)=\ell(\ga'_-)-1$;
$I_2$ consists of elements $(\ga'',n)\in\Ga^{(0)}\times\Z$ with $\ga''_+=\ga'_+,\Ht(\ga'')\le H-N+m_k$ and
  $\ell(\ga'')=\ell(\ga')-1$;
  and $Y\in U(\widehat{\sl}_2)(e\otimes\C[t])+U(\widehat{\sl}_2)((\C h\oplus\C f)\otimes t^{N+1})$.
  %Note that  $\Ht(\ga'')= H-N+m_k$ if and only if $n=N$.
  Note
   that only finitely many $b_{\ga''}\in\C$ for $\ga''\in I_1$ and  finitely many $c_{\ga'',n}\in\C$ for $(\ga'',n)\in I_2$, are nonzero.
  Clearly, $Y\bar u=0$.
%Noticing that $Yu=0$, we see that the summand $Y$ does not contribute to the element $e(N-m_k)w$ and we will ignore it in the following discussion.

Consider the first summand in \eqref{3.15sum} and take $\gamma''\in I_1$. There exist $n_i$, $n_j\in\Z$ with $n_i,n_j\le N$ satisfying
$$\Ht(\gamma')-(2N-n_i-n_j)=\Ht(\gamma'')-(N-(n_i+n_j+N-m_k)).$$
Then $\Ht(\gamma'')=\Ht(\gamma')-2N+m_k\le H-2N+m_k$ and $\gamma''\neq \bar\gamma$ in this case.

Now take $(\ga'',n)\in I_2$ with $n\geq 0$. Then
$$\Ht(\gamma')-\Ht(\gamma'')=N-(n-(N-m_k))=2N-n-m_k.$$
Suppose $\gamma''=\bar\gamma$. Then we have $\Ht(\gamma')=H+N-n$. So $n=N$ and  $\gamma'=\gamma$.

For $(\ga'',n)\in I_2$ with $n<0$, we repeatedly use (\ref{tau}) to remove $h(n)$ and  write  $X(\gamma'')h(n)\bar u$ as a sum with terms of the form $X(\gamma''')\bar u$ with $\ga'''\in\Ga^{(0)}$.
Moreover,  \eqref{tau} and Lemma \ref{Ht property} imply
$$\Ht(\ga''')<\Ht(\ga'')+N-n=\Ht(\gamma')-N+m_k\leq H-N+m_k,$$
or $\Ht(\ga''')=\Ht(\gamma')-N+m_k$ and $\ell(\ga''')=\ell(\gamma'')+2=\ell(\gamma')+1$.
If $\ga'''=\bar\gamma$ then $\Ht(\gamma')=H$ and $l-1=\ell(\ga''')=\ell(\ga')+1\geq l+1$, which is a contradiction.

Thus,  $e(N-m_k)\bw\neq0$ in $\bM$, contradicting the fact that
$\bw$ is a singular vector.

If $\ga_+\neq\bo$, we get a contradiction by similar arguments,
 applying $f(N-m_k)$ to $\bw$ if $\ga_+=(m_1,\dots,m_k)$ for some $m_i\in\Z, m_i<0$,
$i=1,\dots, k\in\N$.  We conclude that all singular vectors of $\bM$ lie in $\C\bu$. Now the statement of the theorem follows from
 Lemma \ref{irre singular bM}.
\end{proof}

As a consequence, we give a description of all irreducible subquotients of $\hM=\hM(\vphi)$ in the critical case.
By Lemma \ref{h to tau}, we can write any nonzero $w\in  \widehat M$ in the form
\begin{equation}\label{w J_w}
w=\sum_{\gamma'\in J_w}X(\gamma')P_{\gamma'} u,
\end{equation}
where $J_w$ is a finite subset of $J(H,l)_0$ and $P_{\gamma'}\in\C[T(N-i), i\in\N]\setminus\{0\}$ for all $\ga\in J_w$. Define
$\Ht'(w)=\max\{\Ht(\gamma'):\gamma'\in J_{w}\}.$

\begin{lem}\label{K=-2submodule}
Suppose $\vphi(h(N))\neq0$ and $\vphi(K)=-2$. Then any submodule of $\hM$ is of the form $U(\hsl_2)Iu$, where $I$ is an ideal of $\C[T(N-i), i\in\N]$.
\end{lem}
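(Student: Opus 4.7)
The plan is to attach to $W$ the subset
$$I := \{P \in R : Pu \in W\},\qquad R := \C[T(N-i),\ i\in\N],$$
verify that $I$ is an ideal of $R$, and then prove $W = U(\hsl_2)Iu$ by passing to the quotient $\widetilde M := \hM/U(\hsl_2)Iu$ and running a singular-vector argument. Since each $T(N-i)$ lies in $U(\hg)$ and commutes with every element of $R$, for $P \in I$ and $Q \in R$ one has $(QP)u = Q(Pu) \in W$, so $QP \in I$; combined with additive closure this makes $I$ an ideal, and the inclusion $U(\hsl_2)Iu \subseteq W$ is immediate.

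For the reverse inclusion, I first describe $\widetilde M$ structurally. Writing $\tilde u$ for the image of $u$ and using that every $P \in R$ commutes with $U(\hg)$, one expands $Y\cdot Pu = P\cdot Yu$ for $Y \in U(\hg)$, $P \in I$; combined with the canonical form of Lemma \ref{h to tau} this gives $U(\hsl_2) I u = \bigoplus_{\gamma \in \Gamma^{(0)}} X(\gamma) Iu$, whence the free $R/I$-module description
$$\widetilde M = \bigoplus_{\gamma \in \Gamma^{(0)}} X(\gamma)(R/I)\tilde u.$$

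Two properties of $\widetilde M$ are then needed, both obtained by transposing already-established arguments to the $R/I$-basis $\{X(\gamma)\tilde u : \gamma \in \Gamma^{(0)}\}$. First, every nonzero $\hsl_2$-submodule of $\widetilde M$ contains a nonzero singular vector, by the minimal-true-height/maximal-weight argument of Lemma \ref{irre singular} and Corollary \ref{submod singular}, which depends only on Lie bracket relations and so carries over verbatim. Second, every singular vector of $\widetilde M$ lies in $(R/I)\tilde u$: this is the analog of Lemma \ref{w in C[T]u}, proved by the same extraction as in Theorem \ref{irre K=-2} — if $\bar w = \sum_\gamma X(\gamma)\bar P_\gamma \tilde u$ is a singular vector with some $\bar P_{\gamma^*} \neq 0$ for $\gamma^* \neq \bo$, then applying $e(N-m_k)$ or $f(N-n_1)$ (according to whether $\gamma^*_- \neq \bo$ or $\gamma^*_+ \neq \bo$) produces, via $\vphi(h(N)) \neq 0$, a nonzero image and contradicts singularity.

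Granting these, suppose for contradiction $W \neq U(\hsl_2)Iu$. Then the image $\widetilde W$ of $W$ in $\widetilde M$ is a nonzero submodule, so it contains a singular vector $\bar P\tilde u$ with $\bar P \in R/I\setminus\{0\}$. Lifting $\bar P\tilde u$ to some $w' \in W$ gives $w' - Pu \in U(\hsl_2)Iu \subseteq W$, so $Pu = w' - (w' - Pu) \in W$, forcing $P \in I$ and $\bar P = 0$ — a contradiction. The main obstacle is the careful verification of the second singular-vector property for $\widetilde M$, in particular making sure that the height-bookkeeping of Lemmas \ref{H,l}--\ref{H,l+1}, \ref{h to tau}, and \ref{Ht property} remains valid after replacing $R$-coefficients by $R/I$-coefficients; this goes through because every reduction step employs only Lie brackets and the commutation of $R$ with $U(\hg)$.
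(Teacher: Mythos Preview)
Your approach is correct and close in spirit to the paper's, but organized differently. The paper does not pass to the quotient $\widetilde M=\hM/U(\hsl_2)Iu$; instead it works directly in $\hM$ using the $\Ga^{(0)}$-expansion furnished by Lemma~\ref{h to tau}: it picks a weight vector $w\in W\setminus U(\hsl_2)Iu$ with minimal $\Ht'(w)$ (and, among those, minimal $\ell$), writes $w=\sum_{\gamma'\in J_w}X(\gamma')P_{\gamma'}u$ with $P_{\gamma'}\in R\setminus I$, applies $e(N-m_k)$ (or $f(N-m')$) exactly as in the proof of Theorem~\ref{irre K=-2}, and tracks the $X(\bar\gamma)$-coefficient to deduce $P_{\gamma}\in I$ --- a contradiction. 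Your route, via the free $R/I$-module structure of $\widetilde M$, is more conceptual and makes explicit why the argument is uniform in $I$; the paper's route is a bit more economical because it avoids re-establishing the singular-vector lemmas for $\widetilde M$. One small caution: your appeal to Lemma~\ref{irre singular} and Corollary~\ref{submod singular} for property~(1) is not quite ``verbatim'', since those are proved with the full $\Gamma$-basis of $\hM$, whereas in $\widetilde M$ one must argue with the $\Ga^{(0)}$-basis over $R/I$ and handle the $h(n)$-terms that appear via the $\tau'$-substitution of Lemma~\ref{h to tau}. The height-bookkeeping you need is precisely the computation carried out in Theorem~\ref{irre K=-2} (which you already cite for property~(2)); in effect the paper fuses your properties~(1) and~(2) into a single minimal-counterexample step rather than separating them.
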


\begin{proof}
Let $W$ be a nonzero submodule of $\hM$. By Lemma \ref{w in C[T]u}, any singular vector in $W$ lies in   $\C[T(N-i), i\in\N]u$. Let $I$ be an ideal of $\C[T(N-i), i\in\N]$ such that $Iu$ is the set of singular vectors in $W$. Note that $U(\hsl_2)Iu\subseteq W$. We will show that $W=U(\hsl_2)Iu$.

To the contrary, assume  $W\ne U(\hsl_2)Iu$.
Let $$H=\min\{\Ht'(v)\mid v\in W\setminus U(\hsl_2)Iu\}.$$
Take any nonzero weight vector $w\in W\setminus U(\hsl_2)Iu$ in the form of \eqref{w J_w} with minimal $H=\Ht'(w)$. Assume $P_{\gamma'}\notin I$ for all $\gamma'\in J_w$. Choose $\gamma\in J_w$ such that $\Ht(\gamma)=H$.
Without loss of generality, we can  assume that
$$l=\ell(\gamma)=\min\{\ell(\mu)\mid H=\Ht(\mu),\mu\in J_w\}.$$
%Write $w$ as  in \eqref{w J_w} further with $P_{\gamma'}\notin I$.
%$$w=\sum_{\gamma'\in J_w}X(\gamma')P_{\gamma'} u,$$ where $J_w$ is a finite subset of $J(H,l)_0$ and $P_{\gamma'}\in\C[T(N-i), i\in\N]\setminus I$   for all $\ga\in J_w$.
We will get  a contradiction by inductions on $H$ and  on $l$.

If $H=0$ then $w=f(N)^m Pu$ for some $P\in \C[T(N-i), i\in\N]\setminus  I$ and $m\in\Z_+$, as $w$ is a weight vector. An easy calculation gives
$e(0)^m w=m!\varphi(h(N)) P  u\in W$, yielding  $P\in I$, which is a contradiction.

Now assume that $H>0$.
Suppose first that $\ga_-\neq\bo$. Write $\ga_-=(m_1,\dots,m_k)$. Set $\bar\ga_-=(m_1,\dots,m_{k-1})\in\Ga_N$ and $\bar\ga=(\ga_+,\bo,\bar\ga_-)\in\Ga^{(0)}$.
It is clear that $\Ht(\bar\ga)=H-N+m_k$ and $\ell(\bar\ga)=l-1$.

Note that
\begin{equation}\label{3.17e(N-m_k)}\aligned
	  e(N-m_k)&\sum_{\ga'\in J_w}X(\gamma')P_{\gamma'} u=\sum_{\ga'\in J_w}e(\ga'_+)e(N-m_k)f(\ga'_-)P_{\gamma'} u\\
	=&\sum_{\ga'\in J_w}e(\ga'_+)[e(N-m_k),f(\ga'_-)]P_{\gamma'} u.\\
	\endaligned\end{equation}
Take any $\ga'=(\ga'_+,\ga'_0,\ga'_-)\in J(H,l)_0\cap J_w$. By the PBW Theorem, we can write
\begin{equation}\label{3.17sum}\aligned
	&e(\ga'_+)[e(N-m_k),f(\ga'_-)]\\
	=&\sum_{\ga''\in I_1}b_{\ga''}X(\ga'')+\sum_{(\ga'',n)\in I_2}c_{\ga'',n}X(\ga'')\big(h(n)+\delta_{n,0}d_{\gamma''}K\big)+Y,
	\endaligned\end{equation}
where $I_1$ consists of elements $\ga''\in\Ga^{(0)}$ with $\ga''_+=\ga'_+, \ell(\ga''_-)=\ell(\ga'_-)-1$;
$I_2$ consists of elements $(\ga'',n)\in\Ga^{(0)}\times\Z$ with $\ga''_+=\ga'_+,\Ht(\ga'')\le H-N+m_k$ and
  $\ell(\ga'')=\ell(\ga')-1$;
  and $Y\in U(\widehat{\sl}_2)(e\otimes\C[t])+U(\widehat{\sl}_2)((\C h\oplus\C f)\otimes t^{N+1})$. Note that only finitely many $b_{\ga''}\in\C$ for $\ga''\in I_1$, or $c_{\ga'',n}\in\C$ for $(\ga'',n)\in I_2$, are nonzero.
  Clearly, $Y u=0$.
%Noticing that $Yu=0$, we see that the summand $Y$ does not contribute to the element $e(N-m_k)w$ and we will ignore it in the following discussion.

Consider the first summand in \eqref{3.17sum} and take $\gamma''\in I_1$. Suppose $\gamma'_-=(n_1,\dots,n_s)$. Then there is some $n_i,n_j\in\Z$ satisfying
$$\Ht(\gamma')-(2N-n_i-n_j)=\Ht(\gamma'')-(N-(n_i+n_j+N-m_k)).$$
Then $\Ht(\gamma'')=\Ht(\gamma')-2N+m_k\le H-2N+m_k$. So $\gamma''\neq \bar\gamma$ in this case.

Then we take $(\ga'',n)\in I_2$ with $n\geq 0$ and we have
$$\Ht(\gamma')-\Ht(\gamma'')=N-(n-(N-m_k))=2N-n-m_k.$$
Suppose $\gamma''=\bar\gamma$. Then we have $\Ht(\gamma')=H+N-n$. So $n=N$ and  $\gamma'=\gamma$.

For $(\ga'',n)\in I_2$ with $n<0$, we can repeatedly use (\ref{tau}) to substitute $h(n)$ and to write  $X(\gamma'')h(n)  u$ as a sum of terms of the form  $X(\gamma''')Q_{\gamma'''} u$, where $Q_{\gamma'''}\in \C[T(N-i), i\in\N]\setminus\{0\}$. Moreover, from \eqref{tau} and Lemma \ref{Ht property} we have
$$\Ht(\ga''')<\Ht(\ga'')+N-n=\Ht(\gamma')-N+m_k\leq H-N+m_k,$$
or $\Ht(\ga''')=\Ht(\gamma')-N+m_k,\ell(\ga''')=\ell(\gamma'')+2=\ell(\gamma')+1$.
If $\ga'''=\bar\gamma$, then $\Ht(\gamma')=H$. So $l-1=\ell(\ga''')=\ell(\ga')+1\geq l+1$, which is a contradiction.

%%%%%%%%

Therefore the right-hand side in \eqref{3.17sum} has a nonzero projection
%a nonzero multiple of $X(\bar\ga)P_\ga u$,
 to $$X(\bar\ga)\C[T(N-i), i\in\N]u$$ if and only if $\gamma'=\ga$, and the projection equals
$sX(\bar\ga)P_\ga u$, where $s$ is the number of $i$'s, $1\leq i\leq k$ with $m_i=m_k$.
As the result, $e(N-m_l)w$ has a nonzero projection to $X(\bar\ga)\C[T(N-i), i\in\N]u$, and in particular,
$e(N-m_k)w\neq 0$. Moreover, we have $\mu<H$ or $\mu=H$, $\ell(\mu)>l$ for all $\mu\in J_{e(N-m_k)w}\setminus\{\bar\gamma\}$.
Then we have $\Ht'(e(N-m_k)w)<H$ or $\Ht(\bar\gamma)=\Ht'(e(N-m_k)w)=H$,$\ell(\bar\gamma)=l-1$.
By the induction hypothesis we get $P(\ga)\in I$, which is a contradiction. The statement of the lemma follows in this case.

Now we suppose $\ga_+\neq\bo$, which is equivalent to $\Ht(\ga_+)>0$. Replacing $e(N-m_k)$ with $f(N-m')$ for some $m'<0$ and using similar arguments to the ones above,
we get again a contradiction. The result follows.
%Consequently, we can deduce $P_\ga\in I$ by induction on $H$, since $H>0$ implies either $H(\ga_-)>0$ or
%$\Ht(\ga_+)>0$.
%
%If $m_1=N$, then $\ga'_-=(N,\dots,N)$ for any $\ga'\in J_w$ with $\Ht(\ga')=H$ and $\ga'_-\neq\bo$. Assume without loss of generality that
%$$\ell(\ga_-)=\max\{\ell(\ga'_-)\ |\ \ga'\in J_w, \Ht(\ga')=H, \gamma_-\neq\bo\}.$$
%%Noticing $\Ht(e(N-m_1)w')\leq\Ht(w')$ for any $w'\in \hM_{-1,N}(\vphi)$,
%
%%%%%%%%%
%Suppose $p_\gamma\notin I$ for some $\gamma\in J(H,l)_0$. Without loss of generality, assume that $p_{\gamma'}\notin I$. If $m_1<N$, then $\Ht(e(N-m_1)v)=\Ht(\bar\gamma')<H$. By the induction hypothesis on $H$ we have $\tilde p_{\bar\gamma'}=p_{\gamma'}\in I$, which is a contradiction. So $p_\gamma\in I$ for any $\gamma\in J(H,l)_0$.
%
%%%%%%%%%%
%%%%%%%%%%
%We will prove this by an induction on $H$. Assume for now that if for any $\gamma\in J(H,l)_0$ with $\Ht(\gamma)=H,p_{\gamma}\neq 0$ we have $\gamma_-=\bo$, then $p_\gamma\in I$ for any $\gamma\in J(H,l)_0$.
%
%
%%%%%%%%%%
%%%%%%%%%
%
%It remains to prove that if for any $\gamma\in J(H,l)_0$ with $\Ht(\gamma)=H,p_{\gamma}\neq 0$ we have $\gamma_-=\bo$, then $p_\gamma\in I$ for any $\gamma\in J(H,l)_0$. In fact, this can be proved similarly using the approaches above.
\end{proof}

For any integers $N_1,N_2$ with $N=N_1+N_2\geq 0$, and a Lie algebra homomorphism $\varphi: \S_{N_1,N_2}\to\C$ with $\varphi(K)=-2$ and  $\theta_i\in\C,i\in\N$, denote by $\widehat M(\theta_i,i\in\N)$ the submodule of $\widehat M(\varphi)$ generated by  singular vectors
$$(\sigma_{-N_1-1}(T(N-i))-\theta_i)u,i\in\N.$$
Let
$$\overline M(\varphi;\theta_i,i\in\N)=\widehat M(\varphi)/\widehat M(\theta_i, i\in\N)$$
 be the corresponding  quotient module.

\begin{thm}\label{irre K=-2 general}
Suppose $\vphi(h(N))\neq0$ and $\vphi(K)=-2$. Then any irreducible $\hsl_2$-subquotient of $\hM(\vphi)$
is isomorphic to a module of the form $\overline M(\vphi;\theta_i, i\in\N), \theta_i\in\C$.
\end{thm}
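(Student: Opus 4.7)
The plan is as follows: show that any irreducible subquotient $M=W/W'$ of $\hM(\vphi)$ necessarily carries a nonzero singular vector of weight $\varphi(h(0))$ on which all Sugawara operators $T(N-i)$ act by scalars, then use the universal property of $\bM(\vphi;\theta_i, i\in\N)$ combined with its irreducibility (Theorem \ref{irre K=-2}) to produce the desired isomorphism.

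As a preliminary reduction I would use the twist isomorphism \eqref{reduce} to reduce to the case $N_1=-1$, in which Lemma \ref{w in C[T]u}, Lemma \ref{h to tau} and especially Lemma \ref{K=-2submodule} are available; the general case is recovered by transporting through $\sigma_{-N_1-1}$, which matches the definition of $\hM(\theta_i, i\in\N)$ involving $\sigma_{-N_1-1}(T(N-i))$. Given an irreducible subquotient $M=W/W'$, Lemma \ref{K=-2submodule} produces ideals $I'\subsetneq I$ of $R:=\C[T(N-i),i\in\N]$ with $W=U(\hsl_2)Iu$ and $W'=U(\hsl_2)I'u$, and the proof of that lemma identifies $Iu$ and $I'u$ as the full sets of singular vectors of $W$ and $W'$, respectively.

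For the main construction, pick any $x\in I\setminus I'$. Since each $T(N-i)$ commutes with $\hg$ by \eqref{T(n)}, the vector $xu$ satisfies the same annihilation relations as $u$, hence is a singular vector in $\hM$ of weight $\varphi(h(0))$; it cannot lie in $W'$ (else it would be a singular vector of $W'$, forcing $x\in I'$), so its image $\bar v\in M$ is a nonzero singular vector. Each $T(N-i)$ also descends to a $\hg$-linear endomorphism of $M$ because it commutes with $\hg$ and preserves both $W$ and $W'$ (since $I$ and $I'$ are ideals of $R$). As $\hM$ has countable dimension over $\C$ and $M$ is an irreducible $\hg$-module, Dixmier's version of Schur's lemma forces $\mathrm{End}_{\hg}(M)=\C$, so $T(N-i)\bar v=\theta_i\bar v$ for some $\theta_i\in\C$ and every $i\in\N$.

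Since $\bar v$ satisfies precisely the defining relations of the generator of $\bM(\vphi;\theta_i, i\in\N)$, there is a $\hg$-module homomorphism $\hM(\vphi)\to M$ sending $u\mapsto\bar v$ which annihilates each $(T(N-i)-\theta_i)u$, hence factors through a nonzero homomorphism $\bM(\vphi;\theta_i,i\in\N)\to M$. Both modules are irreducible, so this must be an isomorphism. The main technical obstacle I anticipate is the sharpening of Lemma \ref{w in C[T]u} needed to guarantee that the singular vectors of the proper submodule $W'$ are exactly $I'u$, which is what forces $\bar v\ne 0$; fortunately this identification is already built into the inductive argument in the proof of Lemma \ref{K=-2submodule} and only needs to be extracted.
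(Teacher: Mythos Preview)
Your proposal is correct and follows essentially the same approach as the paper's proof: both reduce to $N_1=-1$ via \eqref{reduce}, invoke Lemma \ref{K=-2submodule} to express the two submodules as $U(\hsl_2)Iu$ and $U(\hsl_2)I'u$ for ideals $I'\subsetneq I$ of $\C[T(N-i),i\in\N]$, pick a singular vector $v\in Iu\setminus I'u$, use Schur's lemma to extract the scalars $\theta_i$, and then identify the subquotient with $\bM(\vphi;\theta_i,i\in\N)$ via the universal property together with the irreducibility from Theorem \ref{irre K=-2}. The only cosmetic difference is the direction of the comparison map: the paper writes the isomorphism $U(\hsl_2)v\to\hM(\vphi)$, $xv\mapsto xu$, and then passes to quotients, whereas you construct the surjection $\hM(\vphi)\to M$, $u\mapsto\bar v$, and factor through $\bM$; these are two sides of the same universal-property argument.
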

\begin{proof}
By (\ref{reduce}), it suffices to prove the statement for the case $N_1=-1$ and $N_2=N$. Assume that $\vphi=\vphi_{-1,N}$.

Let $V, M$ be two submodules of $\hM(\vphi)$ such that $V\subset M$ and $M/V$ is an irreducible $\hsl_2$-module. By the Schur's lemma, for any $i\in\N$, $T(N-i)$ acts on $M/V$ as some scalar $\theta_i\in \C$.
  By Lemma \ref{K=-2submodule}, $M=U(\hsl_2)Iu$ and $V=U(\hsl_2)Ju$ for some ideals $I,J$ of $\C[T(N-i),i\in\N]$. Clearly, $J\subsetneq I$. Let $v\in Iu\setminus Ju$. Then the map
$$f:U(\hsl_2)v\rightarrow \hM(\vphi),\ x\cdot v\mapsto x\cdot u,x\in U(\hsl_2)$$
is an isomorphism of $U(\hsl_2)$-modules, and we have the induced isomorphism
$$\bar f:U(\hsl_2)v/\langle(T(N-i)-\theta_i,i\in\N)v\rangle\rightarrow \hM(\vphi;\theta_i, i\in N).$$
Thus, $M/V$ and $\overline M(\vphi;\theta_i, i\in N)$ are isomorphic as $\hsl_2$-modules.
\end{proof}

%\begin{thm}
%?? Suppose $\varphi(h(N))=0$ and $\varphi\neq 0$. Let $N'=\max\{n\in\Z_+\ |\ %\varphi(h(n))\neq 0\}$. Then any simple subquotient of $\hM_{-1,N}(\vphi)$ is isomorphic to %a simple subquotient of $\hM_{-1,N'}(\vphi)$.
%\end{thm}

%Noticing that {\color{red}$\si^{k}(T(n))=$?}, we can generalize the above result to the general case.

%\begin{thm}\label{irre K=-2 general}
%Suppose $N_1,N_2\in\Z$ with $N_1+N_2\geq0$ such that $\vphi(N_1+N_2+1)\neq0$ and
%$\vphi(K)=-2$.
%For any nonzero  $P\in\C[\Omega]$ and $\l\in\C$, the quotient
%$$P(\Omega)\hM_{N_1,N_2}(\vphi)/(\Omega-\l)P(\Omega)\hM_{N_1,N_2}(\vphi)$$
%is an irreducible $\hg$-module, which is independent of the choice of $P$.
%Denote this irreducible quotient module by $\hM_{N_1,N_2}(\vphi,\l)$.
%\end{thm}

\medskip

\subsection{Isomorphisms}
Fix  $N_1,N_2,N_1',N_2'\in\Z$ with $N_1+N_2\geq0$ and $N_1'+N'_2\geq0$, $\theta_i,\theta'_i\in\C$ for $i\in\N$,
and Lie algebra homomorphisms $\vphi=\vphi_{N_1,N_2}: \S_{N_1,N_2}\to\C$ and $\vphi'=\vphi'_{N_1,N_2}: \S_{N'_1,N'_2}\to\C$.
In this section we determine the isomorphisms between $\hM(\varphi)$ and $\hM(\vphi')$.

\begin{thm}\label{iso hg}
 We have
\begin{itemize}
\item[(1)] $\hM(\varphi)\cong\hM(\vphi')$ if and only if $N_1=N_1'$,  $N_2=N'_2$ and $\vphi=\vphi'$.
%Suppose $\vphi(K)\neq-2$, $\vphi'(K)\neq-2$. Then $\hM_{N_1,N_2}(\vphi)\cong\hM_{N_1',N_2'}(\vphi')$ if and only if $N_1=N_1'$,  $N_2=N'_2$ and $\vphi=\vphi'$.
\item[(2)] If $\varphi(K)=\varphi'(K)=-2$, then
$$\overline M(\varphi;\theta_i,i\in\N)\cong\overline M(\varphi';\theta'_i,i\in\N)$$
if and only if $N_1=N_1',N_2=N_2',\varphi=\varphi'$, and $\theta_i=\theta_i'$ for any $i\in\N$.
%Suppose $\vphi(K)=-2$, $\vphi'(K)=-2$ and take any $\l,\l'\in\C$.
%Then $\hM_{N_1,N_2}(\vphi)\cong\hM_{N_1',N_2'}(\vphi')$ if and only if $N_1=N_1', N_2=N_2', \vphi=\vphi'$ and $\l=\l'$.
\item[(3)] If $\varphi'(K)=-2$, then $\hM(\varphi)\ncong\overline M(\varphi';\theta_i,i\in\N)$.
%Suppose $\vphi(K)\neq-2$ and $\vphi'(K)=-2$. Then $\hM_{N_1,N_2}(\vphi,\l)\not\cong\hM_{N'_1,N'_2}(\vphi)$ for any $\l\in\C$.
\end{itemize}
\end{thm}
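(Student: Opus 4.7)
The sufficiency in parts (1)--(3) is immediate from the definitions of the modules, so I focus on necessity in each.

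For part~(1), let $\iota\colon\hM(\vphi)\to\hM(\vphi')$ be an isomorphism of $\hg$-modules and set $v=\iota(u)\in\hM(\vphi')$. Centrality of $K$ yields $\vphi(K)=\vphi'(K)$ at once, and the $\hg$-equivariance of $\iota$ forces $v$ to generate $\hM(\vphi')$ and to satisfy $xv=\vphi(x)v$ for every $x\in\S_{N_1,N_2}$. Granting the pivotal claim $v\in\C u'$, write $v=cu'$ with $c\in\C^{\times}$; then $\vphi(x)u'=xu'$ for all $x\in\S_{N_1,N_2}$. Applied to $x=e(n)$ with $n>N_1$ this yields $e(n)u'=\vphi(e(n))u'=0$, which is impossible for $n\leq N_1'$ since $e(n)u'$ would then be a nonzero PBW basis vector in $\hM(\vphi')$; thus $N_1\geq N_1'$, and a symmetric argument through $\iota^{-1}$ gives $N_1=N_1'$. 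The same reasoning with $f(n)$ yields $N_2=N_2'$, and finally $\vphi(h(k))u'=h(k)u'=\vphi'(h(k))u'$ gives $\vphi=\vphi'$.

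The hard step, and the place I expect the main obstacle, is the claim $v\in\C u'$. My plan is to expand $v=\sum_{\gamma}c_\gamma X(\gamma)u'$ in the PBW basis of $\hM(\vphi')$ indexed as in the preceding subsections, and to exploit the three conditions on $v$ (namely $e(k)v=0$ for $k>N_1$, $f(k)v=0$ for $k>N_2$, and $h(k)v=\vphi(h(k))v$ for $k\geq 0$) together with the height filtration $\hM'(H,\ell)$. Following the strategy of Lemmas~\ref{H,l}, \ref{H,l+1}, and \ref{w in C}, I would isolate the ``top'' term (largest $\Ht$, then largest $\ell$) in the expansion and apply suitable operators $e(k)$, $f(k)$, or $h(k)-\vphi(h(k))$ to extract relations on the coefficients modulo lower height. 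A subtlety is that the annihilators of $v$ are indexed by $(N_1,N_2)$ while the PBW basis is indexed by $(N_1',N_2')$; when these ranges differ one must compare the two sets of indices carefully and show that this forces the top coefficients to vanish, ultimately reducing the expansion to a multiple of $X(\bo)u'=u'$.

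For part~(2), suppose $\overline M(\vphi;\theta_i)\cong\overline M(\vphi';\theta'_i)$. Both sides are irreducible by Theorem~\ref{irre K=-2}, and each $T(N-i)$ commutes with $U(\hg)$ by \eqref{T(n)}; Schur's lemma therefore forces $T(N-i)$ to act as a scalar on each module, namely $\theta_i$ on the left and $\theta'_i$ on the right. Any isomorphism preserves these scalars, so $\theta_i=\theta'_i$. The equalities $N_1=N_1'$, $N_2=N_2'$, $\vphi=\vphi'$ then follow by adapting the argument of part~(1) to the generators $\bar u$, $\bar u'$; the extra relations $(T(N-i)-\theta_i)\bar u=0$ in the quotient do not obstruct the PBW-type analysis since they involve operators central in $U(\hg)$. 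Finally, for part~(3), assume $\hM(\vphi)\cong\overline M(\vphi';\theta_i)$. If $\vphi(K)\ne-2=\vphi'(K)$, then $K$ acts by different scalars on the two sides, a contradiction. If $\vphi(K)=-2$, Theorem~\ref{irre general} gives that $\hM(\vphi)$ is reducible, whereas Theorem~\ref{irre K=-2} (under the standing hypothesis $\vphi'(h(N_1'+N_2'+1))\ne 0$) gives that $\overline M(\vphi';\theta_i)$ is irreducible; a reducible and an irreducible $\hg$-module cannot be isomorphic.
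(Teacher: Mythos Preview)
Your arguments for (2) and (3) are essentially the paper's, with one ordering slip in (2): you match the scalars of $T(N-i)$ before knowing $N_1+N_2=N_1'+N_2'$, but on the right-hand module it is $T(N'-i)$, not $T(N-i)$, that acts by $\theta_i'$. The paper first establishes $N_1=N_1'$ and $N_2=N_2'$ exactly as in (1), and only then compares the action of $T(N-i)$.

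The substantive divergence is in (1). The paper does \emph{not} attempt your ``pivotal claim'' $\iota(u)\in\C u'$ at all; it bypasses it with a direct structural observation. For $n\le N_1$ the operator $e(n)$ acts injectively on $\hM(\vphi)$, since it commutes with every $e(m)$ and hence sends each PBW monomial $e(\ga_+)h(\ga_0)f(\ga_-)u$ to another PBW monomial with one more $e$-factor. On the other hand $e(n)u'=0$ whenever $n>N_1'$. Thus if $N_1>N_1'$ one finds an $e(n)$ that is injective on $\hM(\vphi)$ yet kills $u'\in\hM(\vphi')$, so the two modules cannot be isomorphic. This immediately yields $N_1=N_1'$ and, by the same argument with $f$, $N_2=N_2'$. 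Once $\S_{N_1,N_2}=\S_{N_1',N_2'}$, the paper simply compares the $\S_{N_1,N_2}$-action on the length-zero vectors $\C u$, $\C u'$.

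Your route, by contrast, has a real obstacle at the step you flag. The singular-vector Lemmas~\ref{H,l}--\ref{w in C} are proved under the standing hypothesis $\vphi'(h(N'))\neq0$, which part~(1) does not assume; without it there are extra singular vectors (Example~\ref{phi(hN)=0}) and the leading-term extractions collapse. Even granting that hypothesis, those lemmas analyse vectors singular relative to the parameters $(N_1',N_2')$ of the ambient module, whereas your $v=\iota(u)$ is annihilated only by $e(n)$ with $n>N_1$ and $f(m)$ with $m>N_2$. If $N_1>N_1'$ then $v$ is annihilated by \emph{fewer} raising operators than a genuine singular vector of $\hM(\vphi')$, and the key step---applying $e(N'-m_k)$ to isolate a top coefficient---is no longer available. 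So the adaptation you sketch would require reproving the lemmas in a mismatched setting, and it is not clear this can be done; the paper's injectivity argument sidesteps the issue entirely and needs no nondegeneracy hypothesis on $\vphi$ or $\vphi'$.
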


\begin{proof}
%Assertion (3) is clear since the action of $\O$ on $\hM_{N_1,N_2}(\vphi,\l)$ is a scalar while the action of
%$\O$ on $\hM_{N'_1,N'_2}(\vphi)$ is free.

(1) We first assume $N_1>N_1'$. Clearly,
$e(N_1-1)$ annihilates  some nonzero vectors in $\hM(\vphi')$ but does not
annihilate  any nonzero vectors in $\hM(\vphi)$. Hence
$\hM(\vphi)\not\cong\hM(\vphi')$.
Similarly, we have $\hM(\vphi)\not\cong\hM(\vphi')$ if $N_2\neq N_2'$.

Now suppose $N_1=N_1'$ and  $N_2=N'_2$. By comparing the nonzero vectors of length $0$ in $\hM(\vphi)$ and $\hM(\vphi')$, as well as the action of $\S_{N_1,N_2}$ on them, we get $\varphi=\varphi'$.

(2) Similarly to the proof of (1) we can show that $N_1=N_1'$ and $N_2=N_2'$ if $\overline M_(\varphi;\theta_i,i\in\N)\cong\overline M(\varphi';\theta'_i,i\in\N)$. By considering the action of $T(N-i)$ we see that $\theta_i=\theta'_i$ for all $i\in\N$. By comparing the Whittaker  vectors in $\overline M(\varphi;\theta_i,i\in\N)$ and $\overline M(\varphi';\theta_i,i\in\N)$, as well as the action of $\S_{N_1,N_2}$ on them, we get $\varphi=\varphi'$.

(3) Suppose $\hM(\varphi)\cong\overline M(\varphi';\theta_i,i\in\N)$. By considering the action of $K$ we see that $\varphi(K)=\varphi'(K)=-2$. Then (3) is clear since  $\hM(\varphi)$ is not an irreducible $\hsl_2$-module, while $\overline M(\varphi';\theta_i,i\in\N)$ is irreducible.\end{proof}

%Then we have $\vphi(h(n))=\vphi'(h(n))=0$ for all $n>N_1+N_2+1$.
%If $\vphi(h(0))\not\cong\vphi'(h(0))$, then $\hM_{N_1,N_2}(\vphi)\not\cong\hM_{N_1',N_2'}(\vphi')$
%since they have different weight sets.
%If $\vphi(h(n))\neq\vphi'(h(n))$ for some $n>0$, then
%the action of $h(n)-\vphi(h(n))$  on $\hM_{N_1,N_2}(\vphi)$ is locally nilpotent, while the action of $h(n)-\vphi(h(n))$ on $\hM_{N_1',N_2'}(\vphi')$ is injective,
%forcing again $\hM_{N_1,N_2}(\vphi)\not\cong\hM_{N_1',N_2'}(\vphi')$. Assertion (2) follows.

%We conclude that $\hM_{N_1,N_2}(\vphi)\cong\hM_{N_1',N_2'}(\vphi')$ if and only if $N_1=N_1'$,  $N_2=N'_2$ and $\vphi=\vphi'$.
%%Without loss of generality, we assume $N_1\geq N_1'$.

%For assertion (3), notice that $\hM_{N_1,N_2}(\vphi,\l)\cong\hM_{N_1',N_2'}(\vphi',\l')$ implies $\l=\l'$, since they are the two respective eigenvalues of $\O$ acting on the two modules.
%Other part of the results can be proved by an identical argument as for assertion (2).
%$\hM_{N_1,N_2}(\vphi,\l)$ and $\hM_{N_1',N_2'}(\vphi',\l')$ respectively.
%
%
%At last suppose $\vphi(K)=-2$, $\vphi'(K)=-2$ and take any $\l,\l'\in\C$.
%We can deduce that $\hM_{N_1,N_2}(\vphi)\cong\hM_{N_1',N_2'}(\vphi')$
%if and only if $N_1=N_1', N_2=N_2', \vphi=\vphi'$ and $\l=\l'$, by similar arguments as previous, plus
%the fact $\l$ and $\l'$ are the eigenvalues of $\O$ acting on $\hM_{N_1,N_2}(\vphi)$ and $\hM_{N_1',N_2'}(\vphi')$ respectively.
%

\section{Smooth modules over $\tg$}\label{section tg}

\subsection{Irreducibility of smooth $\tg$-modules}
We use notation from Section 2. In particular, recall  the  induced $\tg$-module
$$\tM(\varphi_{N_1,N_2})=\C[d]\ot\hM(\vphi_{N_1,N_2}).$$
Recall also that the action of the Casimir element $\O$ in \eqref{casimir} on $\tM(\vphi_{N_1,N_2})$
is well-defined and commutes with any elements in $U(\tg)$.
 In particular, the action of $\Omega$ on the singular vector $1\ot u\in \tM(\vphi_{N_1,N_2})$ can be computed explicitly as follows
\begin{equation}\aligned\label{Omega}
&\Omega(1\ot u) =\\
=&2d\ot(\vphi(K)+2)u + 1 \ot \left(\frac{1}{2}h(0)^2 +h(0) +2f(0)e(0)\right)u\\
 &+2\ot\left(\sum_{i=1}^{N_2}e(-i)f(i)+\sum_{i=1}^{N_1}f(-i)e(i) +\hskip-3pt\sum_{i=1}^{N_1+N_2+1}\hskip-3pt h(-i)h(i)\right)u.\\
\endaligned\end{equation}

We will denote for simplicity $\vphi=\vphi_{N_1,N_2}$.
For any polynomial $P\in\C[\Omega]$ we have the $\tg$-submodule $P(\Omega)\tM(\vphi)$ of
$\tM(\vphi)$. % and it is proper if and only if $P$ is not a nonzero constant.

\begin{thm}\label{irre tM} Suppose that $\vphi(N_1+N_2+1)\neq0$ and $\vphi(K)\neq-2$.
For any nonzero  $P\in\C[\Omega]$ and $\l\in\C$, the quotient
$$\tM(\vphi,\l):=P(\Omega)\tM(\vphi)/(\Omega-\l)P(\Omega)\tM(\vphi)$$
is an irreducible $\hg$-module,  independent of the choice of $P$.
\end{thm}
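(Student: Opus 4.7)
The plan is to identify $\tM(\vphi)$, as a $\hg$-module, with $\C[\Omega]\ot_\C \hM(\vphi)$, where $\hg$ acts trivially on the first tensor factor and $\Omega$ acts on it by multiplication. Once this is established, the quotient in the theorem will collapse to $\hM(\vphi)$, which is irreducible over $\hg$ by Theorem \ref{irre general} under the given hypotheses $\vphi(h(N_1+N_2+1))\ne 0$ and $\vphi(K)\ne -2$.

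The key input is formula \eqref{Omega}: setting $c:=2(\vphi(K)+2)$, which is nonzero by hypothesis, one has $\Omega\cdot(1\ot u)=c\,d\ot u+1\ot w$ for an explicit $w\in\hM(\vphi)$. I would then filter $\tM(\vphi)=\bigoplus_{k\ge 0}d^k\ot\hM(\vphi)$ by $d$-degree via $F_k=\sum_{j\le k}d^j\ot\hM(\vphi)$. Using $[d,x(n)]=n\,x(n)$ and the centrality of $\Omega$ in $U(\tg)$, I would first verify that $\Omega(F_0)\subseteq F_1$ with $\Omega$ agreeing with $c\,d$ modulo $F_0$; combining this with $[d,\Omega]=0$, a short induction gives $\Omega(F_k)\subseteq F_{k+1}$ and $\Omega\equiv c\,d$ modulo $F_k$ on $F_{k+1}$. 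Iterating yields
\[
\Omega^k(1\ot m)=c^k\,d^k\ot m+(\text{terms of lower }d\text{-degree}),\qquad m\in\hM(\vphi).
\]
Fixing any $\C$-basis $\{m_\alpha\}$ of $\hM(\vphi)$, this leading-$d$-degree estimate shows that $\{\Omega^k\cdot m_\alpha:k\ge 0,\alpha\}$ is also a basis of $\tM(\vphi)$: linear independence by reading off the top-$d$-degree coefficient, and spanning by the inductive rewrite $d^{k+1}\ot m=c^{-k-1}\Omega^{k+1}\cdot m+(\text{strictly lower }d\text{-degree})$. Because $\Omega$ commutes with $\hg$, the map $\Omega^k\ot m\mapsto\Omega^k\cdot m$ is then the desired $\hg$-module isomorphism $\C[\Omega]\ot\hM(\vphi)\xrightarrow{\sim}\tM(\vphi)$.

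Under this identification, $P(\Omega)\tM(\vphi)$ corresponds to $P(\Omega)\C[\Omega]\ot\hM(\vphi)$, and the quotient in the statement becomes
\[
\tM(\vphi,\l)\cong\bigl(P(\Omega)\C[\Omega]/(\Omega-\l)P(\Omega)\C[\Omega]\bigr)\ot\hM(\vphi)\cong\hM(\vphi),
\]
since $\C[\Omega]$ is a principal ideal domain and the first factor collapses to the one-dimensional quotient $\C[\Omega]/(\Omega-\l)$. Thus $\tM(\vphi,\l)$ is irreducible over $\hg$ by Theorem \ref{irre general}, and its isomorphism class is manifestly independent of $P$. I expect the hardest point to be the second step: propagating the estimate $\Omega\equiv c\,d$ from the cyclic vector $1\ot u$ uniformly across the whole $d$-filtration requires carefully combining the centrality of $\Omega$ in $U(\tg)$ with the relation $[d,x(n)]=n\,x(n)$ via a PBW-based commutation computation, and it is the place where both hypotheses $\vphi(K)\ne -2$ (to get $c\ne 0$) and $\vphi(h(N_1+N_2+1))\ne 0$ (used indirectly via the irreducibility of $\hM(\vphi)$) enter.
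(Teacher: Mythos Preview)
Your argument is correct, and it rests on the same key ingredient as the paper's proof: formula~\eqref{Omega}, which on $\tM(\vphi)$ reads $\Omega=c\,d+C_0$ with $c=2(\vphi(K)+2)\ne 0$ and $C_0$ an operator coming from $U(\hg)$ that preserves the $d$-filtration. The filtration estimate you state follows immediately from $x(n)d^k=(d-n)^kx(n)$, so the ``hardest point'' you flag is in fact routine once you write $\Omega$ this way; no delicate PBW commutation is needed.

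The packaging differs slightly from the paper. You first prove the global structural fact $\tM(\vphi)\cong\C[\Omega]\otimes_\C\hM(\vphi)$ as $\hg$-modules and then read off all the quotients at once via the PID structure of $\C[\Omega]$. The paper instead treats $P=1$ directly, asserting $\tM/(\Omega-\l)\tM\cong\hM(\vphi)$ from~\eqref{Omega}, and then reduces arbitrary $P$ to $P=1$ by the surjective $\tg$-map $\phi\colon x(1\ot u)\mapsto xP(\Omega)(1\ot u)$, which must be an isomorphism once the source is irreducible. Your version yields a bit more (the full free $\C[\Omega]$-module structure of $\tM(\vphi)$) and makes the independence of $P$ transparent; the paper's version is shorter but leaves the isomorphism $\tM/(\Omega-\l)\tM\cong\hM(\vphi)$ to the reader, which is exactly the filtration computation you spell out.
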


\begin{proof}
Fix any nonzero $P\in\C[\O]$ and set $\tM=\tM(\vphi)$ for short.
Define a linear map
$$\phi:\ \tM/(\O-\l)\tM\to P(\Omega)\tM/(\Omega-\l)P(\Omega)\tM$$
by $\phi(x(1\ot u))=xP(\O)(1\ot u)$ for any $x\in U(\tg)$. To show that $\phi$ is an isomorphism,
we need only to show that $\tM/(\O-\l)\tM$ is irreducible, since $\phi$ is surjective.

Note that $\tM/(\O-\l)\tM\cong\hM(\vphi)$ as $\hg$-modules by (\ref{Omega}), since $\vphi(K)\neq-2$.
By Theorem \ref{irre general},  the $\hg$-moduyle $\hM(\vphi)$ is irreducible, which  implies the irreducibility of $\tM/(\O-\l)\tM$ over
$\hg$ and hence  over $\tg$.
\end{proof}

On the other hand, if $\vphi(K)=-2$, we can form the induced $\tg$-module
$$\tM(\varphi;\theta_i, i\in\N)=\Ind_{\hg}^{\tg}\bM(\vphi;\theta_i,i\in\N),$$
 which is isomorphic to $\C[d]\ot\bM(\vphi;\theta_i,i\in\N)$
as a vector space.
Set $\bM=\bM(\vphi;\theta_i,i\in\N)$ and $\tM=\tM(\varphi;\theta_i, i\in\N)$ for short. For convenience, we also write $d^iw=d^i\ot w$ for any $w\in \bM$.

\begin{thm}\label{irre tM K=-2}
Suppose $\vphi(N_1+N_2+1)\neq0$ and $\vphi(K)=-2$.
Then $\tM(\varphi;\theta_i, i\in\N)$
is an irreducible $\tg$-module.
\end{thm}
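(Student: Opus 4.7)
The plan is to show that any nonzero $\tg$-submodule $W\subseteq \tM := \tM(\vphi;\theta_i,i\in\N) = \C[d]\otimes\bM$ equals $\tM$, where $\bM = \bM(\vphi;\theta_i,i\in\N)$ is $\hg$-irreducible by Theorem~\ref{irre K=-2}. Via the reduction \eqref{reduce} I may assume $N_1=-1$ and $N_2 = N \geq 1$. The starting point is the action formula
\[
  x\cdot(\phi(d)\otimes v) \;=\; \phi(d-k)\otimes(xv),\qquad x\in\hg_k,\ \phi\in\C[d],\ v\in\bM,
\]
which follows at once from the identity $xd^i = (d-k)^i x$ in $U(\tg)$. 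This implies that the increasing filtration $F_n\tM := \bigoplus_{i\leq n}d^i\otimes\bM$ is $\hg$-stable and that each associated graded piece $F_n/F_{n-1}$ is isomorphic to $\bM$ as an $\hg$-module.

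Let $n_0$ be the least nonnegative integer with $W\cap F_{n_0}\neq 0$. The natural $\hg$-equivariant map $W\cap F_{n_0}\to F_{n_0}/F_{n_0-1}\cong\bM$ is injective (since $W\cap F_{n_0-1}=0$ by minimality of $n_0$) and has nonzero image, so by irreducibility of $\bM$ the image is all of $\bM$. This yields an $\hg$-equivariant section $s\colon\bM\to W\cap F_{n_0}$ of the form
\[
  s(v) \;=\; d^{n_0}\otimes v+\sum_{i<n_0}d^i\otimes\beta_i(v)
\]
for certain linear operators $\beta_i\in\mathrm{End}(\bM)$. In the easy case $n_0=0$ one gets $\bM\subseteq W$; since $d$ carries $d^i\otimes\bM$ into $d^{i+1}\otimes\bM$, repeated application yields $d^i\otimes\bM\subseteq W$ for all $i\geq 0$, whence $W=\tM$.

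The remaining task is to rule out $n_0\geq 1$, which I handle using the critical-level Sugawara element $T(2N)$. Writing $\beta=\beta_{n_0-1}$ and equating the $d^{n_0-1}$-coefficients in $s(xv)=x\cdot s(v)$ for $x\in\hg_k$ yields $[\beta,x]=-n_0 k\,x$ on $\bM$. By the Leibniz rule this extends to $[\beta,y]=-n_0 m\,y$ for any $y\in U(\hg)$ homogeneous of $d$-degree $m$. Smoothness of $\bM$ makes the normal-ordered sum defining $T(2N)$ in \eqref{T(n)'} a well-defined finite-sum operator on each vector, so a term-by-term application of the preceding identity gives $[\beta,T(2N)]=-2n_0 N\,T(2N)$ on $\bM$. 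Since at critical level $T(2N)$ commutes with $U(\hg)$ and $\bM$ is $\hg$-irreducible, Schur's lemma forces $T(2N)=\mu_{2N}\cdot\id$ on $\bM$ for some scalar $\mu_{2N}\in\C$; hence $[\beta,T(2N)]=0$, which forces $\mu_{2N}=0$ since $n_0,N\geq 1$.

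The main obstacle, and the heart of the proof, is then to verify that $\mu_{2N}\neq 0$. I would compute $T(2N)u$ directly from \eqref{T(n)'}, using $e(k)u=0$ for $k\geq 0$, $f(k)u=0$ for $k>N$, $h(k)u=\vphi(h(k))u$ for $0\leq k\leq N$, and $h(k)u=0$ for $k>N$: a case-by-case check shows that every summand $:e(-i)f(2N+i):u$ and $:f(-i)e(2N+i):u$ in $4T(2N)u$ vanishes, because either the rightmost factor annihilates $u$ directly or the relevant commutator produces $h(2N)u=0$; in the $:h(-i)h(2N+i):$ sum only the index $i=-N$ survives and contributes $h(N)h(N)u=\vphi(h(N))^2 u$. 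Hence $\mu_{2N}=\vphi(h(N))^2/4\neq 0$, contradicting the vanishing just derived. This forces $n_0=0$, so $W=\tM$ and $\tM(\vphi;\theta_i,i\in\N)$ is irreducible.
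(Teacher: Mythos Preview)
Your proof is correct and takes a genuinely different route from the paper's.

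The paper argues directly: given a nonzero submodule $W$, pick $w=\sum_{i=0}^k d^i w_i\in W$ with $w_k\neq 0$ and $k$ minimal; after acting by $U(\hg)$ one may take $w_k=\bar u$, and then a case analysis on whether $w_{k-1}$ is singular in $\bM$ produces, via $h(N)-\vphi(h(N))$, $e(m)$, or $f(m')$, a nonzero element of $W$ of strictly smaller $d$-degree, contradicting minimality.

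You instead package the minimal-degree part of $W$ as an $\hg$-equivariant section $s:\bM\to W$, extract the subleading operator $\beta$ satisfying $[\beta,x]=-n_0\deg(x)\,x$, and play this off against the Sugawara element $T(2N)$: it has $d$-degree $2N\neq 0$, commutes with $U(\hg)$, and acts on $\bM$ as the nonzero scalar $\vphi(h(N))^2/4$; the resulting equation $0=[\beta,T(2N)]=-2n_0N\,T(2N)$ is impossible. This is cleaner and avoids the case split entirely, at the cost of a small analytic point (the commutator identity for the infinite sum $T(2N)$, which you handle correctly by truncating on each vector). Two remarks: your invocation of Schur's lemma is unnecessary, since $\bM=U(\hg)\bar u$ and $T(2N)$ commutes with $U(\hg)$, so the single computation $T(2N)\bar u=\tfrac{1}{4}\vphi(h(N))^2\bar u$ already pins down $T(2N)$ on all of $\bM$; and the reduction via \eqref{reduce} to $N_1=-1$ requires extending $\sigma_k$ to $\tg$ (the spectral flow, with $\sigma_k(d)=d-\tfrac{k}{2}h(0)$ up to central shift), which is standard but not stated in the paper.
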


\begin{proof} We only prove the case  $N_1=-1$ and $N_2=N\geq1$, since other cases can be considered similarly.
Set $\tM^{(k)}=\sum_{i=0}^kd^i\bM$
and let $W$ be a nonzero submodule of $\tM$. Take any $w=\sum_{i=0}^kd^i w_i\in W\setminus\{0\}$, where $w_i\in\bM$ and $w_k\neq0$ with minimal $k$. It is enough to show that $k=0$. To the contrary, assume $k\geq1$.
 There exists $x\in U(\g\ot\C[t^{\pm1}])$ such that $xw_k=\bar u$. Then we have
$$xw=\sum_{i=0}^kx(d^i w_i)\equiv d^kxw_k\mod \tM^{(k-1)}.$$
Replacing $w$ with $xw$, we may assume that $w_k=\bar u$.

If $w_{k-1}$ is a singular vector in the $\hg$-module $\bM$, then $w_{k-1}$ is a scalar multiple of $\bar u$ (see the proof of Theorem \ref{irre K=-2}), and we have the following vector in $W$:
$$\aligned
(h(N)-&\vphi(h(N)))w\equiv [h(N),d^k]u+[h(N),d^{k-1}]w_{k-1}\\
\equiv& -kNd^{k-1}h(N)u=-kN\vphi(h(N))d^{k-1}u\mod\tM^{(k-2)},
\endaligned$$
which is a contradiction.

So $w_{k-1}$ is not a singular vector in  $\bM$. Then, from the proof of Theorem \ref{irre K=-2} we get that
either $e(m)w_{k-1}\neq0$ for some $m\in\Z_+$, or $f(m')w_{k-1}\neq0$ for some $m'\in\Z, m'>N$.
In the former case we have
$$\aligned
e(m)w\equiv &e(m)d^ku+e(m)d^{k-1}w_{k-1}\equiv[e(m),d^k]u+d^{k-1}e(m)w_{k-1}\\
\equiv& -kmd^{k-1}e(m)u+d^{k-1}e(m)w_{k-1}\\
=&d^{k-1}e(m)w_{k-1}\mod \tM^{(k-2)},\\
\endaligned$$
which is a contradiction.
Now, in the latter case we have
$$\aligned
f(m')w\equiv &f(m')d^ku+f(m')d^{k-1}w_{k-1}=[f(m'),d^k]u+d^{k-1}f(m')w_{k-1}\\
\equiv& -km'd^{k-1}f(m')u+d^{k-1}f(m')w_{k-1}\\
=&d^{k-1}f(m')w_{k-1}\mod \tM^{(k-2)},\\
\endaligned$$
which is again a contradiction.
The lemma follows.
\end{proof}

As a corollary, we can give a description of singular vectors in $\tM$.
A vector in the $\tg$-module $\tM$ is called a singular vector if it is a singular vector
 of $\tM$ as a $\hg$-module.

\begin{prop}\label{singular tM}
Suppose $\vphi(N_1+N_2+1)\neq0$ and $\vphi(K)=-2$.
The set of singular vectors of $\tM$ coincides with
$\C u$.
\end{prop}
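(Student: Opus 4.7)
The plan is to take a singular vector $w \in \tM$, use the tensor product decomposition $\tM = \C[d] \otimes \bM$ to write $w = \sum_{i=0}^k d^i \otimes w_i$ with $w_i \in \bM$ and $w_k \neq 0$, and reduce to showing $k = 0$. Once this is established, $w = w_0$ is a singular vector of the $\hg$-module $\bM$, and so lies in $\C\bu$ by Theorem \ref{irre K=-2} (or the analogous Lemma \ref{irre singular bM}).

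The mechanism for extracting the $w_i$'s is the identity $xd^i = (d-m)^i x$ in $U(\tg)$, valid for any $x \in \hg$ of degree $m$ (immediate from $[d,x]=mx$ by induction). Applied to each singular-vector equation $e(N_1+i)w = 0$, $f(N_2+i)w = 0$, $(h(j)-\vphi(h(j)))w = 0$ and the weight equation $h(0)w = \lambda w$, expansion as a polynomial in $d$ and reading off the top ($d^k$) coefficient shows that $w_k$ itself satisfies every singular-vector condition in $\bM$. Theorem \ref{irre K=-2} then forces $w_k = c\bu$ for some $c \in \C \setminus \{0\}$.

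Now assume toward a contradiction that $k \geq 1$. Collecting the $d^{k-1}$ coefficient of the same relations and using the already-known vanishings $e(N_1+i)w_k = f(N_2+i)w_k = 0$, one finds that $w_{k-1}$ still satisfies $e(N_1+i)w_{k-1} = 0$ and $f(N_2+i)w_{k-1} = 0$ for all $i \in \N$, while the $h$-equations produce $(h(j)-\vphi(h(j)))w_{k-1} = kjc\vphi(h(j))\bu$ for $j \geq 1$. Specializing to $j = N_1+N_2+1$, the hypothesis $\vphi(h(N_1+N_2+1)) \neq 0$ makes the right-hand side a \emph{nonzero} multiple of $\bu$.

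The decisive step, and the one requiring real work, is to prove that the subspace $V := \{v \in \bM : e(N_1+i)v = f(N_2+i)v = 0 \text{ for all } i \in \N\}$ equals $\C\bu$. After reducing to the case $N_1=-1$, $N_2=N$ via the twist $\sigma_{N_1+1}$ (exactly as in the proof of Theorem \ref{irre tM K=-2}), I would re-examine the proof of Theorem \ref{irre K=-2}: the contradiction obtained there for a hypothetical singular vector $w \notin \C\bu$ (namely, $e(N-m_k)w \neq 0$ or $f(N-m_k)w \neq 0$ for a suitable $m_k$) uses only the $e$- and $f$-annihilation conditions on $w$, never the $h$-conditions. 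The same computation therefore applies verbatim to any element of $V$, showing $V = \C\bu$. Consequently $w_{k-1} \in \C\bu$, whence $(h(N_1+N_2+1)-\vphi(h(N_1+N_2+1)))w_{k-1} = 0$, contradicting the nonzero value from the previous paragraph. Hence $k=0$, and the proposition follows.
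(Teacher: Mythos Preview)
Your argument is correct, and in fact it is the right way to carry this out. The paper's written proof asserts that ``$w_k$ is not a singular vector in the $\hg$-module $\bM$'' when $k\ge 1$, but as your top-degree computation shows, the coefficient $w_k$ \emph{is} a singular vector of $\bM$; so the one-line contradiction printed there does not go through as stated.

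Your route---deducing $w_k=c\bu$, then reading the $d^{k-1}$ coefficient to see that $w_{k-1}$ is annihilated by all $e(N_1+i)$ and $f(N_2+i)$, and finally using that the computation in the proof of Theorem~\ref{irre K=-2} only ever produces a nonvanishing $e(N-m_k)w$ or $f(N-m_k)w$ (never an $h$-condition) to conclude $V=\C\bu$---is exactly what is needed. This is the same mechanism the paper itself exploits in the proof of Theorem~\ref{irre tM K=-2}, where the sentence ``from the proof of Theorem~\ref{irre K=-2} we get that either $e(m)w_{k-1}\neq0$\dots or $f(m')w_{k-1}\neq0$'' is precisely the contrapositive of your claim $V=\C\bu$. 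So your proof and the paper's intended argument coincide in spirit; you have simply made the descent to $w_{k-1}$ and the key lemma $V=\C\bu$ explicit, repairing the slip in the paper's formulation.
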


\begin{proof} Let $w=\sum_{i=0}^kd^i w_i$ be a nonzero singular vector in $\tM(\varphi;\theta_i, i\in\N)$, where $w_i\in\bM$ and $w_k\neq0$.
%Take any $0\neq w=\sum_{i=0}^kd^i w_i$, where $w_i\in\bM$ and $w_k\neq0$.

Suppose first $k\geq1$.
Note that $w_k$ is not a singular vector in the $\hg$-module $\bM$
by the proof of Theorem  \ref{irre K=-2}.
Without loss of generality
we may assume that $w_k$ is a weight vector. Then there exists
$$x\in \{e(N_1+m), f(N_2+m), h(m)-\vphi(h(m)), m\in\N\}$$
such that $xw_k\neq0$. We have
$$xw\equiv xd^kw_k\equiv d_kxw_k\neq0 \mod\sum_{i=0}^{k-1}d^i\bM.$$
%Hence $w$ is not a singular vector.
We get a contradiction.

Now assume $k=0$. Then $w\in\bM$ is a singular vector in the $\tg$-module $\tM$ if and only if it is a
singular vector in the $\hg$-module $\bM$. Hence the result follows from
Theorem \ref{irre K=-2}.
\end{proof}

%\begin{thm}\label{irre tM K=-2}
%Let notation be as before with $\vphi(K)=-2$.
%Then the $\tg$-module $\tM_{N_1,N_2}(\vphi,\l)$ is irreducible if and only if $\vphi(N_1+N_2+1)\neq0$.
%\end{thm}
%
%\begin{proof}
%The necessity is obvious. We now prove the sufficiency. Suppose $\vphi(N_1+N_2+1)\neq0$ and hence
%$\tM_{N_1,N_2}(\vphi,\l)$ is irreducible over $\hg$ by Theorem \ref{irre K=-2 general}
%
%\end{proof}
%
%
%Let $\Omega$ be the Casimir operator of $\tg$, which can be written explicitly as
%(c.f. \cite{Kac} or \cite{ALZ})
%\begin{equation}\aligned
%\Omega =& 2dK + 4d +\frac{1}{2}h(0)^2 +h(0) +2f(0)e(0)\\
% &+2\sum_{n=1}^{\infty} (e(-n)f(n) +f(-n)e(n) +\frac{1}{2}h(-n)h(n)). \notag
%\endaligned\end{equation}
%
%
\subsection{Isomorphisms} Fix any $N_1,N_2,N_1',N_2'\in\Z$ with $N_1+N_2\geq0$ and $N_1'+N'_2\geq0$,
and Lie algebra homomorphisms $\vphi: \S_{N_1,N_2}\to\C$ and $\vphi': \S_{N'_1,N'_2}\to\C$.
We  determine the isomorphisms between constructed $\tg$-modules.

\begin{thm}
In the notation above we have the following.
\begin{itemize}
\item[(1)] Suppose $\vphi(K)\neq-2$, $\vphi'(K)\neq-2$ and take any $\l,\l'\in\C$. Then $\tM(\vphi,\l)\cong\tM(\vphi',\l')$ if and only if $N_1=N_1'$,  $N_2=N'_2$, $\vphi=\vphi'$ and $\l=\l'$.
\item[(2)] Suppose $\vphi(K)=-2$, $\vphi'(K)=-2$ and take $\theta_i,\theta_i'\in\C, i\in\N$.
Then $\tM(\vphi;\theta_i,i\in\N)\cong\tM(\vphi';\theta'_i,i\in\N)$ if and only if $N_1=N_1'$,  $N_2=N'_2$, $\vphi=\vphi'$ and $\theta_i=\theta_i'$ for all $i\in\N$.
\item[(3)] Suppose $\vphi(K)\neq-2$, $\vphi'(K)=-2$ and take $\l,\theta_i'\in\C, i\in\N$.
Then $\tM(\vphi;\l)\not\cong\tM(\vphi';\theta'_i,i\in\N)$.
\end{itemize}
\end{thm}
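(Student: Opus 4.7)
The plan is to prove each direction using intrinsic invariants of the $\tg$-modules that any isomorphism must preserve; the "if" direction is immediate from construction in each case. For (3), the central element $K$ acts by the scalar $\vphi(K) \neq -2$ on $\tM(\vphi,\l)$ and by $\vphi'(K) = -2$ on $\tM(\vphi';\theta'_i,i\in\N)$. No isomorphism can match these, so (3) follows immediately.

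For (1) and (2), I first observe that in each of the modules the generator $\bar u$ is, up to scalar, the unique singular vector. For (2) this is exactly Proposition \ref{singular tM}. For (1), the proof of Theorem \ref{irre tM} identifies $\tM(\vphi,\l) \cong \hM(\vphi)$ as $\hg$-modules; combined with Theorem \ref{irre general} and Lemma \ref{singular submod}, this forces singular vectors to lie in $\C\bar u$. Any isomorphism $\phi$ therefore satisfies $\phi(\bar u) = c\bar u'$ for some $c\in\C^{\times}$, which I normalize to $c = 1$.

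With $\phi(\bar u) = \bar u'$, I then recover all defining data from the scalar action on $\bar u$ of operators that commute with $\phi$. The integers $N_1, N_2$ are read off as $N_1 = \max\{n \in \Z : e(n)\bar u \neq 0\}$ and $N_2 = \max\{n \in \Z : f(n)\bar u \neq 0\}$; the scalars $\vphi(h(k))$ for $0 \leq k \leq N_1+N_2+1$ from the action of $h(k)$ on $\bar u$; for (1), the scalar $\l$ from $\Omega \bar u = \l \bar u$ (which holds in the quotient by definition); for (2), each $\theta_i$ from $T(N-i)\bar u = \theta_i\bar u$, noting that $T(N-i) \in U(\hg)$ commutes with $\phi$. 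Matching these scalars on both sides yields $N_1 = N_1'$, $N_2 = N_2'$, $\vphi = \vphi'$, and $\l = \l'$ or $\theta_i = \theta_i'$, respectively.

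The main obstacle is verifying that $e(N_1)\bar u$ and $f(N_2)\bar u$ are actually nonzero, so that the invariants $N_1, N_2$ are correctly detected. In (1) this is immediate from PBW since $e(N_1), f(N_2) \notin \S_{N_1,N_2}$. In (2) it requires combining Lemma \ref{K=-2submodule} --- which identifies $\hM(\theta_i,i\in\N) = I \cdot \hM$ for the ideal $I = (T(N-i)-\theta_i, i\in\N)$ --- with the basis of $\hM$ by elements $X(\gamma) P(T) u$ for $\gamma \in \Gamma^{(0)}$ and $P \in \C[T(N-i), i \in \N]$ furnished by Lemma \ref{h to tau}. In this basis $e(N_1)u$ appears with polynomial coefficient $P = 1 \notin I$, hence survives in the quotient $\bM$; a symmetric argument handles $f(N_2)u$.
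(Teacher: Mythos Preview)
Your proof is correct and follows essentially the same strategy as the paper: identify the unique singular vector (via Proposition \ref{singular tM} in case (2), and via the $\hg$-isomorphism $\tM(\vphi,\l)\cong\hM(\vphi)$ together with irreducibility in case (1)), then recover all parameters from how $\S_{N_1,N_2}$ and the central operators act on it. The paper packages parts (1) and (2) slightly more modularly by invoking Theorem \ref{iso hg} on the underlying $\hg$-modules rather than rederiving the invariants, but the content is the same. Your argument for (3) via the $K$-eigenvalue is simpler than the paper's, which instead distinguishes the two modules by observing that $\tM(\vphi,\l)$ is irreducible over $\hg$ while $\tM(\vphi';\theta'_i,i\in\N)=\C[d]\ot\bM$ is not.
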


\begin{proof}
Assertion (1) follows from the fact that isomorphic $\tg$-modules have the same central character with respect to the action of the Casimir element $\Omega$ and are also isomorphic as $\hg$-modules, thanks to the first assertion of Theorem \ref{iso hg}.

To prove (2), assume $\tM(\vphi;\theta_i,i\in\N)\cong\tM(\vphi';\theta'_i,i\in\N)$ and let $u, u'$ be the canonical generating vectors in the respective $\hg$-modules. By Proposition \ref{singular tM},
the sets of singular vectors in these two modules are $\C\bar u$ and $\C\bar u'$ respectively.
Then the generating vector $\bar u$ must correspond to $\bar u'$ up to a scalar under the above isomorphism,
which in turn induces an isomorphism between the $\hg$-modules $\bM(\vphi;\theta_i,i\in\N)$
and $\bM(\vphi';\theta'_i,i\in\N)$. The result then follows from Theorem \ref{iso hg}.

Assertion (3) is clear, since $\tM(\vphi;\l)$ is irreducible over $\hg$ while\\
$\tM(\vphi';\theta'_i,i\in\N)$ is not.
\end{proof}

\medskip

\subsection{More $\tg$-modules}

Let $E({\bf a},{\bf\l})$ be the irreducible $\hg$-module from \cite{CP2} (see also  \cite[Section 3]{GZ}). Then \cite[Theorem 3.4]{GZ} implies that
 the $\tg$-modules
$$(E({\bf a},{\bf\l})\ot \hM(\vphi_{N_1,N_2}))[d]$$
are  irreducible if
 $\vphi(N_1+N_2+1)\neq0$ and $\vphi(K)\neq-2$.
 Similarly, the $\tg$-modules
$$(E({\bf a},{\bf\l})\ot \bM(\vphi_{N_1,N_2};\theta_i,i\in\N))[d]$$
are  irreducible if
 $\vphi(N_1+N_2+1)\neq0$ and $\vphi(K)=-2$.

\medskip

\section{ Acknowledgments}
\noindent V. Futorny is partially supported by NSF of China (12350710787 and 12350710178); X. Guo is partially supported by NSF of China (Grant 11971440) and NSF of Guangzhou University (RC2023062); Y. Xue is partially supported by NSF of China (12301037); K. Zhao is partially supported by NSERC (311907-2020).

\vspace{0.2cm} \noindent V.F.: Shenzhen International Center for Mathematics, Southern University of Science and
Technology, Shenzhen, China.\\
Email: vfutorny@gmail.com

\vspace{0.4cm} \noindent X.G.: School of Mathematics and Information Science, Guangzhou University, Guangzhou
510006, China. \\
Email: guoxq@gzhu.edu.cn.

\vspace{0.4cm} \noindent
Y. X.: School of Mathematics and Statistics, Nantong University, Nantong, Jiangsu, 226019, P. R. China.\\
Email: yxue@ntu.edu.cn

\vspace{0.2cm} \noindent K.Z.: College of
Mathematics and Information Science, Hebei Normal (Teachers)
University, Shijiazhuang, Hebei, 050016 P. R. China, and Department of Mathematics, Wilfrid
Laurier University, Waterloo, ON, Canada N2L 3C5. \\
Email: kzhao@wlu.ca

\end{document}